\numberwithin{equation}{section}
\newtheorem{definition}{Definition}[section]
\newtheorem{remark}[definition]{Remark}
\newtheorem{example}[definition]{Example}
\newtheorem{theorem}[definition]{Theorem}
\newtheorem{proposition}[definition]{Proposition}
\newtheorem{lemma}[definition]{Lemma}
\newtheorem{corollary}[definition]{Corollary}
\theoremstyle{remark}
\numberwithin{equation}{section}
\newcommand{\mbZ}{\mathbb{Z}}
\newcommand{\mcA}{\mathcal{A}}
\newcommand{\mcB}{\mathcal{B}}
\newcommand{\mcC}{\mathcal{C}}
\newcommand{\mcD}{\mathcal{D}}
\newcommand{\mcF}{\mathcal{F}}
\newcommand{\mcI}{\mathcal{I}}
\newcommand{\mcL}{\mathcal{L}}
\newcommand{\mcP}{\mathcal{P}}
\newcommand{\mcX}{\mathcal{X}}
\newcommand{\mcY}{\mathcal{Y}}
\newcommand{\mcGP}{\mathcal{GP}}
\newcommand{\mcGI}{\mathcal{GI}}
\newcommand{\modu}{\mathrm{mod}}
\newcommand{\add}{\mathrm{add}}
\newcommand{\Mod}{\mathsf{Mod}}
\newcommand{\fgMod}{\mathsf{mod}}
\newcommand{\Ch}{\mathsf{Ch}}
\newcommand{\resdim}{{\rm resdim}}
\newcommand{\coresdim}{{\rm coresdim}}
\newcommand{\pd}{{\rm pd}}
\newcommand{\id}{{\rm id}}
\newcommand{\Gpd}{{\rm Gpd}}
\newcommand{\Gid}{{\rm Gid}}
\newcommand{\Hom}{{\rm Hom}}
\newcommand{\Ext}{{\rm Ext}}
\newcommand{\Ker}{\text{Ker}}
\def\@seccntformat#1{%
  \protect\textup{\protect\@secnumfont
    \ifnum\pdfstrcmp{section}{#1}=0 \scshape\bfseries\fi
    \ifnum\pdfstrcmp{subsection}{#1}=0 \bfseries\fi
    \csname the#1\endcsname
    \protect\@secnumpunct
  }%
}
\begin{document}

\title{$m$-periodic Gorenstein objects}
\thanks{2020 MSC: 18G25 (18G10; 18G20; 16E65)}
\thanks{Key Words: relative Gorenstein projective (injective) objects, strongly Gorenstein projective modules, GP-admissible pairs, cluster tilting subcategories.}

\author{Mindy Y. Huerta}
\address[M. Y. Huerta]{Facultad de Ciencias, Universidad Nacional Aut\'onoma de M\'exico. Circuito Exterior, Ciudad Universitaria. CP04510. Mexico City, MEXICO. 
}
\email{mindyhp90@ciencias.unam.mx}

\author{Octavio Mendoza}
\address[O. Mendoza]{Instituto de Matem\'aticas. Universidad Nacional Aut\'onoma de M\'exico. Circuito Exterior, Ciudad Universitaria. CP04510. Mexico City, MEXICO}
\email{omendoza@matem.unam.mx}

\author{Marco A. P\'erez}
\address[M. A. P\'erez]{Instituto de Matem\'atica y Estad\'istica ``Prof. Ing. Rafael Laguardia''. Facultad de Ingenier\'ia. Universidad de la Rep\'ublica. CP11300. Montevideo, URUGUAY}
\email{mperez@fing.edu.uy}

\maketitle

\begin{abstract}
We present and study the concept of $m$-periodic Gorenstein objects relative to a pair $(\mathcal{A,B})$ of classes of objects in an abelian category, as a generalization of $m$-strongly Gorenstein projective modules over associative rings. We prove several properties when $(\mathcal{A,B})$ satisfies certain homological conditions, like for instance when $(\mcA,\mcB)$ is a GP-admissible pair. Connections to Gorenstein objects and Gorenstein homological dimensions relative to these pairs are also established.  
\end{abstract}


\pagestyle{myheadings}
\markboth{\rightline {\scriptsize M. Huerta, O. Mendoza and M. A. P\'{e}rez}}
         {\leftline{\scriptsize $m$-periodic Gorenstein objects}}


\section*{Introduction}

Let $R$ be an associative ring with identity. In \cite{BM2}, Bennis and Mahdou introduced the notion of strongly Gorenstein projective modules over $R$. These are particular cases of Gorenstein projective $R$-modules $M$ which are part of a short exact sequence $M \rightarrowtail P \twoheadrightarrow M$, with $P$ projective. In other words, strongly Gorenstein projective modules are Gorenstein projective modules which are projective-periodic, in the sense of Bazzoni, Cort\'es-Izurdiaga and Estrada \cite{BCE}. These modules have turned out to be useful in the computation of global Gorenstein dimensions. Indeed, Bennis and Mahdou proved in \cite{BMglobal} that 
\[
{\rm sup}\{ {\rm Gpd}(M) {\rm \ : \ } M \in \Mod(R) \} = {\rm sup}\{ {\rm Gid}(M) {\rm \ : \ } M \in \Mod(R) \},
\]
where $\Mod(R)$ denotes the category of left $R$-modules, and ${\rm Gpd}(M)$ and ${\rm Gid}(M)$ denote the Gorenstein projective and Gorenstein injective dimensions of $M$. Part of the proof of this equality is based in the characterization of Gorenstein projective modules as direct summands of strongly Gorenstein projective modules. 

A further generalization of strongly Gorenstein projective modules was later proposed in \cite{BM}, also by Bennis and Mahdou, which they called $m$-strongly Gorenstein projective modules. These are defined as those $M \in \Mod(R)$ for which there exists an exact sequence $M \rightarrowtail P_m \to \cdots \to P_1 \twoheadrightarrow M$, with $P_k$ projective for every $1 \leq k \leq m$, that remains exact after applying the contravariant functor $\Hom(-,Q)$, with $Q$ running over the class of projective modules. 

In the present article, we propose in the setting of an abelian category $\mcC$ a relativization of $m$-strongly Gorenstein modules, which we call $m$-periodic $(\mathcal{A,B})$-Gorenstein projective objects, where $\mcA$ and $\mcB$ are classes of objects in $\mcC$. This follows the relativization of Gorenstein projective modules proposed in \cite{BMS} by Becerril, Santiago and the second author, which are known as $(\mathcal{A,B})$-Gorenstein projective objects. These are defined as cycles of an exact complex $A_\bullet$ of objects in $\mcA$ such that $\Hom(A_\bullet, B)$ is exact for every $B \in \mcB$. So it is natural to think of a notion of $m$-strongly Gorenstein projective objects relative to $(\mathcal{A,B})$ in the following way: $(\mcA,\mcB)$-Gorenstein projective objects $M$ which are $\mcA$-periodic with period $m$, that is, for which there is an exact sequence of the form
\begin{align}
M \rightarrowtail A_m \to \cdots \to A_1 \twoheadrightarrow M \label{eqn:loop_intro}
\end{align}
with $A_k \in \mcA$ for every $1 \leq k \leq m$. Several well known results for strongly and $m$-strongly Gorenstein projective modules will be also true for $m$-periodic $(\mcA,\mcB)$-Gorenstein projective objects, provided that certain conditions are fulfilled by $\mcA$ and $\mcB$. The assumptions we consider on $\mcA$ and $\mcB$ determine the organization of this article, which we describe below.


\subsection*{Organization of the paper}

Section \ref{sec:preliminaries} is devoted to recall some preliminaries from homological algebra, and to fix most of the notation we shall be using throughout. Then in Section \ref{sec:acyclicity} we prove some preliminary results concerning acyclicity conditions of complexes relative to Hom functors. Most of these results are presented under the terminology of $\mcA$-loops of length $m$ at objects in an abelian category. These are chain complexes that result from glueing at an object $M$ infinitely many complexes of the form \eqref{eqn:loop_intro} (see Definition \ref{def:loop}). The most remarkable result in this section is Corollary \ref{cor:Li^perp=M^perp}, where we give a characterization of the condition $M \in {}^\perp\mcB$ for $\mcA$-loops at $M$, where $\mcB$ is a class of objects $B$ satisfying $\Ext^i(A,B) = 0$ for every $A \in \mcA$ and $i \geq 1$. This characterization will be key in providing an alternative description of the acyclicity condition that defines $m$-periodic $(\mcA,\mcB)$-Gorenstein projective objects (compare Definitions \ref{def:sGP} and \ref{def:wsGP}). We also show some characterizations of acyclicity for $\mcA$-loops in the case where $\mcA$ is an $(n+1)$-rigid subcategory (see Proposition \ref{prop:ex}).

In Section \ref{sec: m-period} we define the class of $m$-periodic $(\mcA,\mcB)$-Gorenstein projective objects in abelian categories. These are objects of the form $Z_{mk}(A_\bullet)$, with $k \in \mathbb{Z}$, where $A_\bullet$ is an $\mcA$-loop of length $m$ which is also $\Hom(-,\mcB)$-acyclic. If the later condition is replaced by saying that $A_\bullet$ has cycles in ${}^{\perp}\mcB$, we obtain a particular family of $m$-periodic $(\mcA,\mcB)$-Gorenstein objects which we call \emph{weak}. Several examples that support these definitions are displayed in Example \ref{ex:sGP}. In the particular instance where we consider $m$-periodic Gorenstein projective modules relative to the pair (finitely presented modules, injective modules), we obtain a characterization of Noetherian rings in terms of these relative periodic Gorenstein modules (see Proposition \ref{prop:characterization_noetherian}). 

Several good properties and characterizations of $m$-periodic $(\mcA,\mcB)$-Gorenstein objects occur when the hereditary condition $\pd_{\mcB}(\mcA) = 0$ (that is, every object in $\mcA$ is projective relative to every object in $\mcB$) is assumed. These are studied in Section \ref{sec: hereditary}. Our first result in Proposition \ref{pro:WSGP=SGP} is that the condition $\pd_{\mcB}(\mcA) = 0$ is equivalent to the fact that every $m$-periodic $(\mcA,\mcB)$-Gorenstein object is weak. Moreover, from $\pd_{\mcB}(\mcA) = 0$ we can also give several characterizations of these objects by means of several acyclicity conditions. We prove in Proposition \ref{prop:characterizationGPvsWGP} that an object is $m$-periodic $(\mcA,\mcB)$-Gorenstein projective if, and only if, it is $(\mcA,\mcB)$-Gorenstein projective (in the sense of \cite{BMS}) and $\mcA$-periodic with period $m$ (a proposed generalization of the $\mcA$-periodic objects studied in \cite{BCE}). Some characterizations of $m$-periodic $(\mcA,\mcB)$-Gorenstein projective objects in terms of acyclicity conditions are proved in Theorem \ref{teo:equivmSGP}. This result is inspired in the equivalent descriptions of $m$-strongly Gorenstein projective modules proved by Bennis and Mahdou in \cite[Thm. 2.8]{BM}, and by Zhao and Huang in \cite[Thm. 3.9]{ZH}.

Another result from \cite{ZH} (namely, an $m$-strongly Gorenstein projective module is projective if, and only if, it is self-orthogonal) is proved in Section \ref{sec: orth relation} within our relative context for a particular class of $m$-periodic $(\mcA,\mcB)$-Gorenstein projective objects, which we call \emph{proper}, and defined as $mk$-th cycles in $\Hom(\mcA,-)$-acyclic and $\Hom(-,\mcB)$-acyclic complexes (see Definition \ref{def: piGPacyc} and Corollary \ref{cor: M in A<-> (MM)<m}). We also explore some relations between proper $m$-periodic $(\mcA,\mcB)$-Gorenstein projective objects and the $(n+1)$-cluster tilting subcategories defined by Iyama in \cite{Iyama}. The later family of subcategories allows to show examples of relative $m$-periodic Gorenstein projective objects which are not weak (see Corollary \ref{cor: SWGPclustertilting}).

\begin{figure}[H]
\includegraphics[width=1.15\textwidth]{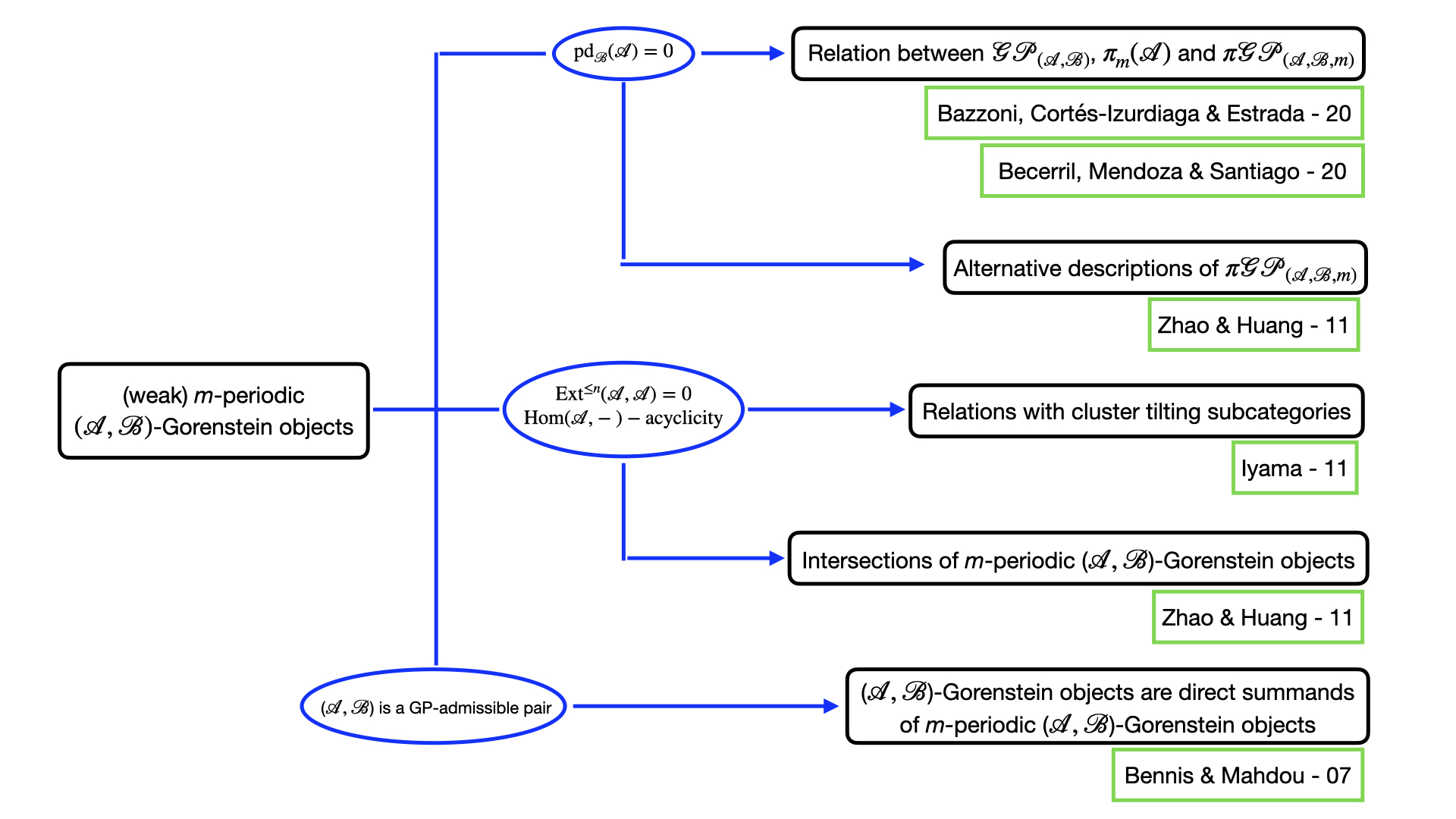}
\caption{A flowchart of the results obtained depending on the assumptions for $(\mcA,\mcB)$.}
\end{figure}

Proper $m$-periodic $(\mcA,\mcB)$-Gorenstein projective objects have more relations with the results from \cite{ZH} for $m$-strongly Gorenstein projective modules. In Section \ref{sec: intersections} we show that the intersection of proper $m$-periodic and proper $n$-periodic $(\mcA,\mcB)$-Gorenstein projective objects yields $(\mcA,\mcB)$-Gorenstein projective objects with period equal to the greatest common divisor of $m$ and $n$, as shown by Zhao and Huang in \cite{ZH} for the absolute case $\mcA = \mcB = \text{projective $R$-modules}$.

Finally, Section \ref{sec: admissible} is devoted to study the outcomes of considering $m$-periodic Gorenstein objects relative to a GP-admissible pair $(\mcA,\mcB)$. Perhaps the most important result in this section is the characterization of $(\mcA,\mcB)$-Gorenstein projective objects in terms of direct summands of $1$-periodic $(\mcA,\mcB)$-Gorenstein projective objects (see Proposition \ref{pro: GP sumandos directos de mGP}). This result will help us to give descriptions of the objects in the testing class $\mcB$ which are $(\mcA,\mcB)$-Gorenstein projective or (proper) $m$-periodic $(\mcA,\mcB)$-Gorenstein projective. Furthermore, it will be possible to deduce some properties concerning global and finitistic Gorenstein dimensions relative to GP-admissible and GI-admissible pairs (see Section \ref{sec:dimensions}).


\subsection*{Conventions}

Throughout, $\mcC$ will always denote an abelian category (not necessarily with enough projective and injective objects).
The main examples of such categories considered in this article will be:
\begin{itemize}
\item $\Mod(R)$ = left $R$-modules (with $R$ an associative ring with identity).

\item $\fgMod(\Lambda)$ = finitely generated left $\Lambda$-modules (with $\Lambda$ an Artin algebra). 

\item $\Ch(\mcC)$ = chain complexes of objects in $\mcC$. For the case where $\mcC = \Mod(R)$, the corresponding category of chain complexes of left $R$-modules will be denoted by $\Ch(R)$. Objects in $\Ch(\mcC)$ are denoted as $X_\bullet$, each $X_m$ denotes the $m$-th component of $X_\bullet$ in $\mcC$, and $Z_m(X_\bullet)$ denotes the $m$-th cycle of $X_\bullet$ in $\mcC$. 
\end{itemize}
Unless otherwise stated, all modules over $R$ and $\Lambda$ are assumed to be left $R$-modules and left $\Lambda$-modules, respectively. Similarly, complexes of modules are complexes of left $R$-modules. 

All classes of objects in $\mcC$ may be regarded as full subcategories of $\mcC$. Given a class $\mcA \subseteq \mcC$, we shall denote by ${\rm Free}(\mathcal{A})$ (resp., ${\rm free}(\mathcal{A})$) the class of all objects isomorphic to (finite) direct sums of objects in $\mcA$, and by ${\rm Add}(\mcA)$ (resp., $\add(\mcA)$) the class of all objects isomorphic to direct summands of objects in ${\rm Free}(\mcA)$ (resp., ${\rm free}(\mathcal{A})$). Monomorphisms and epimorphisms in $\mcC$ are denoted by using the arrows $\rightarrowtail$ and $\twoheadrightarrow$, respectively. We shall refer to commutative grids whose rows and columns are exact sequences as \emph{solid diagrams}. Finally, we point out that the definitions and results presented in this article have their corresponding dual statements, which in many cases will be omitted for simplicity.


\section{Preliminaries}\label{sec:preliminaries}

In the following concepts, $\mcA, \mcB \subseteq \mcC$ and $C \in \mcC$.


\subsection*{Resolution dimensions}

An \emph{$\mcA$-resolution of $C$} is an exact sequence
\[
\cdots \to A_k \to A_{k-1} \to \cdots \to A_1 \to A_0 \twoheadrightarrow C
\]
with $A_k \in \mcA$ for every $k \geq 0$. If $A_k = 0$ for every $k > m$, we shall say that the previous sequence is a \emph{finite $\mcA$-resolution of $C$ of length $m$}. The \emph{resolution dimension of $C$ with respect to $\mcA$} (or the \emph{$\mcA$-resolution dimension of $C$}, for short), denoted $\resdim_{\mcA}(C)$, is the smallest $m \geq 0$ such that $C$ admits a finite $\mcA$-resolution of length $m$. If such $m$ does not exist, we set $\resdim_{\mcA}(C) := \infty$. Dually, we have the concepts of (finite) \emph{coresolution} and \emph{coresolution dimension of $C$ with respect to $\mcA$}, denoted by $\coresdim_{\mcA}(C)$. With respect to these two homological dimensions, we shall frequently consider the following classes of objects in $\mcC$:
\begin{align*}
\mcA^\wedge_m & := \{ C \in \mcC \text{ : } \resdim_{\mcA}(C) \leq m \}, & & \text{and} & \mcA^\wedge & := \bigcup_{m \geq 0} \mcA^\wedge_m, \\
\mcA^\vee_m & := \{ C \in \mcC \text{ : } \coresdim_{\mcA}(C) \leq m \}, & & \text{and} & \mcA^\vee & := \bigcup_{m \geq 0} \mcA^\vee_m.
\end{align*}
Given a class $\mcB$ of objects in $\mcC$, the \emph{$\mcA$-resolution dimension of $\mcB$} is defined as
\[
{\rm resdim}_{\mcA}(\mcB) := {\rm sup}\{ {\rm resdim}_{\mcA}(B) {\rm \ : \ } B \in \mcB \}.
\]
The \emph{$\mcA$-coresolution dimension of $\mcB$} is defined dually, and denoted by ${\rm coresdim}_{\mcA}(\mcB)$.


\subsection*{Orthogonality with respect to extension functors} 

For $i \geq 1$, let 
\[
\Ext^i(-,-) \colon \mcC^{\rm op} \times \mcC \longrightarrow \Mod(\mathbb{Z})
\] 
denote the extension bifunctors in the sense of Yoneda. We shall identify $\Ext^0(-,-)$ with the hom bifunctor $\Hom(-,-)$. We fix some notations regarding the vanishing of $\Ext^i(-,-)$:
\begin{itemize}
\item $\Ext^i(\mcA,\mcB) = 0$ if $\Ext^i(A,B) = 0$ for every $A \in \mcA$ and $B \in \mcB$. In the case where $\mcA = \{ M \}$ or $\mcB = \{ N \}$, we shall write $\Ext^i(M,\mcB) = 0$ and $\Ext^i(\mcA,N) = 0$, respectively. 

\item For $i\geq 1$, $\Ext^{\leq i}(\mcA, \mcB) = 0$ if $\Ext^{j}(\mcA, \mcB) = 0$ for every $1 \leq j \leq i$. 

\item For $i \geq 1$, $\Ext^{\geq i}(\mcA, \mcB) = 0$ if $\Ext^{j}(\mcA, \mcB) = 0$ for every $j \geq i$. 
\end{itemize}
Recall that the \emph{right $i$-th orthogonal complement} and \emph{total orthogonal complement of $\mcA$} are defined by 
\[
\mcA^{\perp_i} := \{ N \in \mcC \mbox{ : } \Ext^i(\mcA,N) = 0 \}, \text{ \ } \mcA^\perp := \bigcap_{i \geq 1} \mcA^{\perp_i} = \{ N \in \mcC \mbox{ : } \Ext^{\geq 1}(\mcA,N) = 0 \},
\]
respectively. Dually, we have the \emph{$i$-th} and \emph{the total left orthogonal complements ${}^{\perp_i}\mcB$ and ${}^{\perp}\mcB$ of $\mcB$}, respectively. 


\subsection*{Relative homological dimensions}

The \emph{relative projective dimensions of $C$ and $\mcA$ with respect to $\mcB$} are defined by
\begin{align*}
\pd_{\mcB}(C) & := \inf \{ m \geq 0 {\rm \ : \ } \Ext^{\geq m+1}(C,\mcB) = 0 \}, \text{ \ } \pd_{\mcB}(\mcA) := \sup\{ \pd_{\mcB}(A) \text{ : } A \in \mcA \},
\end{align*}
respectively. If $\mcB = \mcC$, we just write $\pd(M)$ and $\pd(\mcA)$ for the (absolute) projective dimensions. Dually, we denote by $\id_{\mcA}(M)$ and $\id_{\mcA}(\mcB)$ the \emph{relative injective dimension of $M$} and \emph{$\mcB$}, respectively, \emph{with respect to $\mcA$}. If $\mcA = \mcC$, $\id(M)$ and $\id(\mcB)$ denote the (absolute) injective dimensions. It can be seen that 
\[
\pd_{\mcB}(\mcA) = \id_{\mcA}(\mcB).
\]


\subsection*{Resolving classes}

A class $\mcA$ is \emph{preresolving} if it is closed under extensions and under epikernels (that is, under taking kernels of epimorphisms between objects in $\mcA$). Moreover, if we let $\mcP$ denote the class of projective  objects in $\mcC$, a preresolving class $\mcA$ is \emph{resolving} if $\mcP \subseteq \mcA$. If the corresponding dual properties hold true, then $\mcA$ is called \emph{precoresolving} or \emph{coresolving}. For the latter, one considers the class $\mathcal{I}$ of injective objects in $\mathcal{C}$. 


\subsection*{Approximations}

A morphism $f \colon A \to C$ with $A \in \mcA$ is an \emph{$\mcA$-precover of $C$} if the induced group homomorphism 
\[
\Hom_{\mcC}(A',f) \colon \Hom_{\mcC}(A',A) \to \Hom_{\mcC}(A',C)
\] 
is epic for every $A' \in \mcA$. An $\mcA$-precover $f \colon A \to C$ is \emph{special} if it is epic and $\Ker(f) \in \mcA^{\perp_1}$. The dual concept is called (\emph{special}) \emph{$\mcA$-preenvelope}. Moreover, $\mcA$ is a \emph{precovering class} (resp., \emph{preenveloping class}) if every object in $\mcC$ has an $\mcA$-precover (resp., $\mcA$-preenvelope). Finally, $\mcA$ is \emph{functorially finite} if it is both precovering and preenveloping.


\subsection*{Relative cogenerators}
A class $\omega \subseteq \mcA$ is said to be a \emph{relative cogenerator in $\mcA$} if for every $A \in \mcA$ there is a short exact sequence $A \rightarrowtail W \twoheadrightarrow A'$ where $W \in \omega$ and $A' \in \mcA$. The dual concept is called \emph{relative generator in $\mcA$}.


\subsection*{Relative Gorenstein objects}

In what follows, given a chain complex $X_\bullet \in \Ch(\mcC)$, the notation $X_\bullet \in \Ch(\mcA)$ means that $X_m \in \mcA$ for every $m \in \mbZ$. We shall say that $X_\bullet$ is \emph{$\Hom(-,\mcB)$-acyclic} if the induced complex of abelian groups 
\[
\Hom(X_\bullet,B) = (\Hom(X_m,B))_{m \in \mbZ}
\] 
is exact for every $B \in \mcB$. Dually, we have the concept of $\Hom(\mcB,-)$-acyclic complexes. 

Most of our examples will be built from Gorenstein objects relative to certain pairs $(\mcA,\mcB)$ of classes of objects in $\mcC$. Following Becerril, Mendoza and Santiago's \cite[Defs. 3.2 \& 3.11]{BMS}, an object $C \in \mcC$ is:
\begin{itemize}
\item \emph{$(\mcA,\mcB)$-Gorenstein projective} if $C = Z_0(X_\bullet)$ for some exact and $\Hom(-,\mcB)$-acyclic complex $X_\bullet \in \Ch(\mcA)$. The \emph{$(\mcA,\mcB)$-Gorenstein injective objects} are defined dually. We borrow from \cite{BMS} the notations $\mcGP_{(\mcA,\mcB)}$ and $\mcGI_{(\mcA,\mcB)}$ for the classes of $(\mcA,\mcB)$-Gorenstein projective and $(\mcA,\mcB)$-Gorenstein injective objects of $\mcC$, respectively.

\item \emph{Weak $(\mcA,\mcB)$-Gorenstein projective} if $M \in {}^{\perp}\mcB$ and there is an exact sequence $M \rightarrowtail X_0 \to X_1 \to \cdots$ with $X_k \in \mcA$ for every $k \geq 0$ and with cycles in ${}^{\perp}\mcB$. The \emph{weak $(\mcA,\mcB)$-Gorenstein injective objects} are defined dually. The classes of such objects will be denoted by $\mathcal{WGP}_{(\mcA,\mcB)}$ and $\mathcal{WGI}_{(\mcA,\mcB)}$, respectively. 
\end{itemize}
In the case $\mcA = \mcB$, we write $\mathcal{GP}_{\mcA}$ and $\mathcal{GI}_{\mcA}$. For example, $\mcGP_{\mcP}$ and $\mcGI_{\mcI}$ are precisely the classes of Gorenstein projective and Gorenstein injective objects of $\mcC$, which we shall write as $\mcGP$ and $\mcGI$, for simplicity. As particular examples where $\mcA$ and $\mcB$ do not necessarily coincide, we have the following:
\begin{itemize}
\item Gillespie's \cite[Defs. 3.2 \& 3.7]{GillespieDC}: Ding projective and Ding injective modules, by setting $(\mcA,\mcB) = (\mcP(R),\mcF(R))$ and $(\mcA,\mcB) = (\text{FP-}\mcI(R),\mcI(R))$, respectively. Here, $\text{FP-}\mcI(R)$ stands for the class of FP-injective (or absolutely pure) $R$-modules, and $\mathcal{F}(R)$ denotes the class of flat $R$-modules. 

\item Bravo, Gillespie and Hovey's \cite[\S \ 5 \& 8]{BGH}: Gorenstein AC-projective and Gorenstein AC-injective modules, by setting $(\mcA,\mcB) = (\mcP(R),\mcL(R))$ and $(\mcA,\mcB) = (\text{FP}_\infty\text{-}\mcI(R),\mcI(R))$, respectively. Here, $\mcL(R)$ and $\text{FP}_\infty\text{-}\mcI(R)$ denote, respectively, the classes of level and $\text{FP}_\infty$-injective (or absolutely clean) $R$-modules (see also \cite[Def. 2.6]{BGH}). 
\end{itemize}


\section{Acyclicity of chain complexes under $\Hom(-,-)$}\label{sec:acyclicity}

The generalization of $n$-strongly Gorenstein projective and injective modules, which will be introduced in the following section, is a notion that depends on acyclicity conditions of complexes with respect to restrictions of the Hom functor. This section is devoted to study some of these conditions. Specifically, given classes of objects $\mathcal{A}$ and $\mathcal{B}$ in an abelian category $\mathcal{C}$, we consider chain complexes $A_\bullet \in \Ch(\mathcal{A})$ and look for necessary and sufficient conditions on the cycles $Z_k(A_\bullet)$ that characterize the acyclicity of $A_\bullet$ relative to $\Hom(\mathcal{A},-)$ and $\Hom(-,\mathcal{B})$. 

In what follows, according to Iyama's \cite[Def. 1.1]{Iyama}, we shall say that for $n \geq 1$ a class $\mcA \subseteq \mcC$ is:
\begin{itemize}
\item \emph{$(n+1)$-rigid} if $\Ext^{\leq n}(\mathcal{A,A}) = 0$;

\item \emph{$(n+1)$-cluster tilting} if it is functorially finite and satisfies 
\[
\mcA = \bigcap_{0 < i < n+1} {}^{\perp_i}\mcA = \bigcap_{0 < i < n+1} \mcA^{\perp_i}.
\]
In particular, $(n+1)$-cluster tilting subcategories are $(n+1)$-rigid.
\end{itemize}

Let us commence with the following well known relation between the $i$-fold extension groups $\Ext^i(Z_k(A_\bullet),B)$ and $\Ext^i(A,Z_k(A_\bullet))$ with $A \in \mathcal{A}$ and $B \in \mathcal{B}$.

\begin{lemma}[shifting]\label{lem:shifting}
Let $\mathcal{A}$ and $\mathcal{B}$ be classes of objects in $\mathcal{C}$, $A_\bullet \in \Ch(\mathcal{A})$ be an exact complex, and $n \geq 1$. Then, for every $k \in \mathbb{Z}$, $1 \leq i \leq n-1$, $A \in \mathcal{A}$ and $B \in \mathcal{B}$, one has the following natural isomorphisms:
\begin{align*}
\Ext^i(A,Z_{k}(A_\bullet)) & \cong \Ext^{i+1}(A,Z_{k+1}(A_\bullet)), \ \text{ provided that } \mathcal{A} \text{ is $(n+1)$-rigid}, \\
\Ext^i(Z_{k+1}(A_\bullet),B) & \cong \Ext^{i+1}(Z_k(A_\bullet),B), \ \text{ provided that } \Ext^{\leq n}(\mathcal{A,B}) = 0.
\end{align*}
\end{lemma}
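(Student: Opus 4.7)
The proof is a standard dimension-shifting argument built from the short exact sequences of cycles extracted from the exact complex $A_\bullet$.

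First, I would record the key sequences. Because $A_\bullet$ is exact, for every $k \in \mathbb{Z}$ the image of the differential $A_k \to A_{k-1}$ coincides with $Z_{k-1}(A_\bullet)$, and so there are short exact sequences
\[
Z_{k+1}(A_\bullet) \rightarrowtail A_{k+1} \twoheadrightarrow Z_{k}(A_\bullet)
\]
with $A_{k+1} \in \mathcal{A}$. These are the only structural ingredients needed; all the work is then done by the long exact sequences of $\Ext$.

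For the first isomorphism, apply the contravariant-covariant bifunctor $\Ext^{*}(A,-)$ with $A \in \mathcal{A}$ to the above short exact sequence. This yields a long exact sequence that contains the fragment
\[
\Ext^{i}(A,A_{k+1}) \longrightarrow \Ext^{i}(A,Z_{k}(A_\bullet)) \longrightarrow \Ext^{i+1}(A,Z_{k+1}(A_\bullet)) \longrightarrow \Ext^{i+1}(A,A_{k+1}).
\]
Since $\mathcal{A}$ is $(n+1)$-rigid we have $\Ext^{j}(\mathcal{A},\mathcal{A}) = 0$ for $1 \le j \le n$, so the two outer terms vanish whenever $1 \le i$ and $i+1 \le n$, that is, for $1 \le i \le n-1$. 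The resulting connecting map is then the desired natural isomorphism $\Ext^{i}(A,Z_{k}(A_\bullet)) \cong \Ext^{i+1}(A,Z_{k+1}(A_\bullet))$, with naturality in $A$ and functorial dependence on $k$ coming from the functoriality of the long exact sequence.

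The second isomorphism is completely analogous but with the roles of the two arguments swapped: apply $\Ext^{*}(-,B)$ with $B \in \mathcal{B}$ to the same short exact sequence $Z_{k+1}(A_\bullet) \rightarrowtail A_{k+1} \twoheadrightarrow Z_{k}(A_\bullet)$, extract the fragment
\[
\Ext^{i}(A_{k+1},B) \longrightarrow \Ext^{i}(Z_{k+1}(A_\bullet),B) \longrightarrow \Ext^{i+1}(Z_{k}(A_\bullet),B) \longrightarrow \Ext^{i+1}(A_{k+1},B),
\]
and observe that the hypothesis $\Ext^{\leq n}(\mathcal{A},\mathcal{B}) = 0$ kills the two outer terms in exactly the same range $1 \le i \le n-1$. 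The connecting homomorphism is then the claimed natural isomorphism.

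There is no real obstacle here; the only thing to be careful about is the bookkeeping of indices (checking that the hypothesis is used in degrees $i$ and $i+1$, which together force the upper bound $i \le n-1$), and noting that both isomorphisms are natural because they are just connecting morphisms in long exact $\Ext$-sequences associated to a fixed short exact sequence of cycles.
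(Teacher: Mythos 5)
Your proof is correct and is exactly the standard dimension-shifting argument that the paper implicitly relies on (the lemma is stated without proof and only illustrated by the iterated diagram~\eqref{fig1}). You identify the right short exact sequences $Z_{k+1}(A_\bullet) \rightarrowtail A_{k+1} \twoheadrightarrow Z_k(A_\bullet)$, apply the appropriate long exact $\Ext$-sequences, and correctly pin down the range $1 \le i \le n-1$ for which both outer terms vanish under the respective hypotheses.
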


We can represent inductively the previous natural isomorphisms in the following picture:
\begin{equation}\label{fig1} 
\parbox{7.75in}{ \scriptsize
\begin{tikzpicture}[description/.style={fill=white,inner sep=2pt}] 
\matrix (m) [ampersand replacement=\&, matrix of math nodes, row sep=0.5em, column sep=0em,text height=1.25ex, text depth=0.25ex] 
{ 
{} \& {} \& {} \& {} \& {} \& {} \& {} \& {} \& {}^1(Z_k(A_\bullet),B) \& \cong \& \cdots \\ 
{} \& {} \& {} \& {} \& {} \& {} \& {}^1(Z_{k+1}(A_\bullet),B) \& \cong \& {}^2(Z_k(A_\bullet),B) \& \cong \& \cdots  \& {} \& {} \\ 
{} \& {} \& {} \& {} \& {} \& {} \& \vdots \& {} \& \vdots \& {} \& {}  \& {} \& {} \\ 
{} \& {} \& {}^1(Z_{k+i-1}(A_\bullet),B) \& \cong \& \cdots \& \cong \& {}^{i-1}(Z_{k+1}(A_\bullet),B) \& \cong \& {}^i(Z_k(A_\bullet),B) \& \cong \& \cdots  \& {} \& {} \\ 
{}^1(Z_{k+i}(A_\bullet),B) \& \cong \& {}^2(Z_{k+i-1}(A_\bullet),B) \& \cong \& \cdots \& \cong \& {}^{i}(Z_{k+1}(A_\bullet),B) \& \cong \& {}^{i+1}(Z_k(A_\bullet),B) \& \cong \& \cdots  \& {} \& {} \\ 
\vdots \& {} \& \vdots \& {} \& {} \& {} \& \vdots \& {} \& \vdots \\
}; 
\path[->] 
; 
\end{tikzpicture} 
}
\end{equation}

The following is a relativization of Schanuel's Lemma (see for instance \cite[Ch. II, \S 5.A]{Lam}).

\begin{lemma}[Schanuel]\label{lem: Schanuel}
Let $\mcA$ be a class of objects in $\mcC$ closed under finite coproducts and satisfying $\Ext^1(\mcA,\mcA) = 0$. Let
$A_\bullet$ and $A'_{\bullet}$ be exact and $\Hom(\mcA,-)$-acyclic complexes in $\Ch(\mcA)$ such that  $Z_0(A_\bullet) \simeq Z_0(A'_\bullet)$. Then, for each $k > 0$ there exist objects $A, A'\in \mcA$ such that 
\[
Z_{k}(A_\bullet) \oplus A \simeq Z_{k}(A'_\bullet) \oplus A'.  
\]
\end{lemma}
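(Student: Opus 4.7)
The plan is to prove the statement by induction on $k \geq 1$, following the pullback strategy of the classical Schanuel lemma.

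For the base case $k = 1$, set $M := Z_0(A_\bullet) \simeq Z_0(A'_\bullet)$ and consider the short exact sequences
\[
Z_1(A_\bullet) \rightarrowtail A_1 \twoheadrightarrow M \quad \text{and} \quad Z_1(A'_\bullet) \rightarrowtail A'_1 \twoheadrightarrow M.
\]
I would form the pullback $P := A_1 \times_M A'_1$; a standard diagram chase produces two induced short exact sequences
\[
Z_1(A_\bullet) \rightarrowtail P \twoheadrightarrow A'_1 \quad \text{and} \quad Z_1(A'_\bullet) \rightarrowtail P \twoheadrightarrow A_1,
\]
and the goal is to show both of these split. For the first, it suffices to verify $\Ext^1(A'_1, Z_1(A_\bullet)) = 0$, for which I would apply $\Hom(A'_1, -)$ to $Z_1(A_\bullet) \rightarrowtail A_1 \twoheadrightarrow M$ and examine the exact fragment
\[
\Hom(A'_1, A_1) \to \Hom(A'_1, M) \to \Ext^1(A'_1, Z_1(A_\bullet)) \to \Ext^1(A'_1, A_1).
\]
The rightmost group vanishes by the hypothesis $\Ext^1(\mcA, \mcA) = 0$, and the leftmost arrow is surjective because $A'_1 \in \mcA$ and $A_\bullet$ is $\Hom(\mcA, -)$-acyclic (this surjectivity is the standard reformulation of the acyclicity of $\Hom(A'_1, A_\bullet)$ at position $1$). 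The symmetric argument, swapping the roles of $A_\bullet$ and $A'_\bullet$, handles the other splitting, so $Z_1(A_\bullet) \oplus A'_1 \simeq P \simeq Z_1(A'_\bullet) \oplus A_1$, proving the base case with $A := A'_1$ and $A' := A_1$.

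For the inductive step, suppose the result holds for some $k \geq 1$, so that $N := Z_k(A_\bullet) \oplus A \simeq Z_k(A'_\bullet) \oplus A'$ for certain $A, A' \in \mcA$. Adding $A$ (resp.\ $A'$) as a direct summand to the middle and right terms of the canonical short exact sequence $Z_{k+1}(A_\bullet) \rightarrowtail A_{k+1} \twoheadrightarrow Z_k(A_\bullet)$ (resp.\ its primed counterpart) produces two short exact sequences
\[
Z_{k+1}(A_\bullet) \rightarrowtail A_{k+1} \oplus A \twoheadrightarrow N \quad \text{and} \quad Z_{k+1}(A'_\bullet) \rightarrowtail A'_{k+1} \oplus A' \twoheadrightarrow N
\]
with a common rightmost object $N$ and middle terms in $\mcA$ (by closure of $\mcA$ under finite coproducts). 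Repeating the base case pullback argument verbatim on these two sequences yields
\[
Z_{k+1}(A_\bullet) \oplus (A'_{k+1} \oplus A') \simeq Z_{k+1}(A'_\bullet) \oplus (A_{k+1} \oplus A),
\]
which is exactly the statement at index $k+1$. The required Ext vanishings $\Ext^1(A'_{k+1} \oplus A', Z_{k+1}(A_\bullet)) = 0$ and $\Ext^1(A_{k+1} \oplus A, Z_{k+1}(A'_\bullet)) = 0$ follow from the same combination of $\Ext^1(\mcA,\mcA) = 0$ and $\Hom(\mcA, -)$-acyclicity as in the base case, now applied to the short exact sequence $Z_{k+1}(A_\bullet) \rightarrowtail A_{k+1} \twoheadrightarrow Z_k(A_\bullet)$ and its primed analogue.

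The main point to watch is that, since only first-degree Ext vanishing $\Ext^1(\mcA, \mcA) = 0$ is assumed and not higher-degree vanishing, no dimension-shift is available to reduce the higher inductive steps directly to the base case; however, the $\Hom(\mcA, -)$-acyclicity hypothesis compensates exactly by supplying the surjectivity $\Hom(X, A_{n+1}) \twoheadrightarrow \Hom(X, Z_n(A_\bullet))$ at every index $n$ and every $X \in \mcA$, which is precisely what is needed to keep the pullback argument effective at each level of the induction.
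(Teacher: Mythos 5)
Your proposal is correct and follows essentially the same route as the paper: both establish the vanishing $\Ext^1(\mcA, Z_k(A_\bullet)) = 0$ (and its primed analogue) from $\Ext^1(\mcA,\mcA)=0$ together with $\Hom(\mcA,-)$-acyclicity, then run the classical pullback splitting argument and iterate it inductively, at each step replacing the short exact sequences by ones with the accumulated direct summands from the previous step attached. The paper presents the first two levels of this induction explicitly and then says ``repeating this procedure inductively,'' which is precisely the step you carry out carefully.
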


\begin{proof}
First, since  $\Ext^{1}(\mcA, \mcA) = 0$ and $A_\bullet$ and $A'_\bullet$ are exact and $\Hom(\mathcal{A},-)$-acyclic, we have that $\Ext^{1}(\mcA, Z_{k}(A_{\bullet})) = 0$ and $\Ext^{1}(\mcA, Z_{k}(A'_{\bullet})) = 0$ for every $k\in \mathbb{Z}$. On the other hand, taking the pullback of the exact sequences $Z_1(A'_\bullet) \rightarrowtail A'_1 \twoheadrightarrow Z_0(A'_\bullet)$ and $Z_1(A_\bullet) \rightarrowtail A_1 \twoheadrightarrow Z_0(A_\bullet)$, where $Z_0(A_\bullet) \simeq Z_0(A'_\bullet)$, yields the following solid diagram:
\[
\begin{tikzpicture}[description/.style={fill=white,inner sep=2pt}] 
\matrix (m) [matrix of math nodes, row sep=2.5em, column sep=2.5em, text height=1.25ex, text depth=0.25ex] 
{ 
{} & Z_1(A_\bullet) & Z_1(A_\bullet) \\
Z_1(A'_\bullet) & E_1 & A_1 \\
Z_1(A'_\bullet) & A'_1 & Z_0(A'_\bullet) \\
}; 
\path[->] 
(m-2-2)-- node[pos=0.5] {\footnotesize$\mbox{\bf pb}$} (m-3-3) 
;
\path[>->]
(m-1-2) edge (m-2-2) (m-1-3) edge (m-2-3)
(m-2-1) edge (m-2-2) (m-3-1) edge (m-3-2)
;
\path[->>]
(m-2-2) edge (m-3-2) (m-2-3) edge (m-3-3)
(m-2-2) edge (m-2-3)
(m-3-2) edge (m-3-3)
;
\path[-,font=\scriptsize]
(m-2-1) edge [double, thick, double distance=2pt] (m-3-1)
(m-1-2) edge [double, thick, double distance=2pt] (m-1-3)
;
\end{tikzpicture} .
\]
Since $\Ext^{1}(\mcA, Z_{k}(A_{\bullet}))=0=\Ext^{1}(\mcA, Z_{k}(A'_{\bullet}))$ for 
every $k\in \mathbb{Z}$, the central row and column split, and so $Z_1(A'_\bullet) \oplus A_1 \simeq Z_1(A_\bullet) \oplus A'_1$. This in turn implies the existence of the following solid diagram: 
\[
\begin{tikzpicture}[description/.style={fill=white,inner sep=2pt}] 
\matrix (m) [matrix of math nodes, row sep=2.5em, column sep=2.5em, text height=1.25ex, text depth=0.25ex] 
{ 
{} & Z_2(A_\bullet) & Z_2(A_\bullet) \\
Z_2(A'_\bullet) & E_2 & A_2 \oplus A'_1 \\
Z_2(A'_\bullet) & A'_2 \oplus A_1 & Z_1(A_\bullet) \oplus A'_1 \\
}; 
\path[->] 
(m-2-2)-- node[pos=0.5] {\footnotesize$\mbox{\bf pb}$} (m-3-3) 
;
\path[>->]
(m-1-2) edge (m-2-2) (m-1-3) edge (m-2-3)
(m-2-1) edge (m-2-2) (m-3-1) edge (m-3-2)
;
\path[->>]
(m-2-2) edge (m-3-2) (m-2-3) edge (m-3-3)
(m-2-2) edge (m-2-3)
(m-3-2) edge (m-3-3)
;
\path[-,font=\scriptsize]
(m-2-1) edge [double, thick, double distance=2pt] (m-3-1)
(m-1-2) edge [double, thick, double distance=2pt] (m-1-3)
;
\end{tikzpicture} .
\]
Using again $\Ext^{1}(\mcA, Z_{k}(A_{\bullet}))=0=\Ext^{1}(\mcA, Z_{k}(A'_{\bullet}))$ for 
every $k\in \mathbb{Z}$, we get that $Z_2(A_\bullet) \oplus A'_2 \oplus A_1 \simeq Z_2(A'_\bullet) \oplus A_2 \oplus A'_1$. Repeating this procedure inductively,
the result follows.
\end{proof}

For the rest of this section, we shall focus on the following particular type of bounded chain complexes. In what follows, $m \geq 1$ is a positive integer.

\begin{definition}\label{def:loop}
Given $\mathcal{A} \subseteq \mathcal{C}$ and $M \in \mathcal{C}$, an \textbf{$\bm{\mathcal{A}}$-loop at $\bm{M}$ of length $\bm{m}$} is an exact chain complex $A_\bullet \in \Ch(\mathcal{A})$ such that $Z_{k m}(A_\bullet) \simeq M$ for every $k \in \mathbb{Z}$. 
\end{definition}

\begin{remark}\label{rem:loop}
Notice that from any $\mathcal{A}$-loop at $M$ of length $m$ one obtains a bounded exact chain complex of the form
\begin{align}
0 & \to M \to A_m \to \cdots \to A_1 \to M \to 0 \label{eqn:loop}
\end{align}
with $A_k \in \mathcal{A}$ for every $1 \leq k \leq m$. Conversely, glueing at $M$ infinitely many copies of \eqref{eqn:loop} yields an $\mathcal{A}$-loop at $M$ of length $m$. 
\end{remark}

The following is a consequence of Lemma \ref{lem:shifting}.

\begin{corollary}\label{cor:Li^perp=M^perp}
Let $\mcA, \mcB \subseteq \mcC$ such that $\pd_{\mcB}(\mcA) = 0$, and $A_\bullet \in \Ch(\mathcal{C})$ be an $\mathcal{A}$-loop at $M \in \mathcal{C}$ of length $m$. Then, the following are equivalent for $\mathcal{Y} = \mathcal{B}, \mathcal{B}^\wedge$: 
\begin{enumerate}[(a)]
\item $M\in {}^{\perp}\mcY$;

\item $Z_{k}(A_\bullet) \in {}^{\perp}\mcY$ for every $1\leq k\leq m$;

\item $Z_{k}(A_\bullet) \in {}^{\perp}\mcY$ for some $1\leq k\leq m$;

\item $Z_{k}(A_\bullet) \in {}^{\perp_{1}}\mcY$ for every $1\leq k\leq m$;

\item There exists $i \geq 1$ such that $Z_{k}(A_\bullet) \in {}^{\perp_{i}}\mcY$ for every $1\leq k \leq m$;

\item There exists $i\geq 0$ such that $M \in \bigcap^{m}_{k = 1} {}^{\perp_{i+k}}\mcY$.
\end{enumerate}
\end{corollary}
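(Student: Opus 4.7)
My plan is to run everything through the shifting Lemma \ref{lem:shifting} and exploit the periodic nature of the $\mathcal{A}$-loop $A_\bullet$. First I would observe that the hypothesis $\pd_{\mathcal{B}}(\mathcal{A}) = 0$ extends to $\pd_{\mathcal{B}^\wedge}(\mathcal{A}) = 0$: given $Y \in \mathcal{B}^\wedge$ with a finite $\mathcal{B}$-resolution, a standard dimension-shifting argument using the long exact sequence of $\Ext(A,-)$ for $A \in \mathcal{A}$ forces $\Ext^{\geq 1}(\mathcal{A}, Y) = 0$. Consequently, for $\mathcal{Y} \in \{\mathcal{B}, \mathcal{B}^\wedge\}$ we have $\Ext^{\leq n}(\mathcal{A}, \mathcal{Y}) = 0$ for every $n \geq 1$, and the second isomorphism in Lemma \ref{lem:shifting} applies without restriction on $i \geq 1$ to give
\[
\Ext^{i+1}(Z_k(A_\bullet), Y) \cong \Ext^i(Z_{k+1}(A_\bullet), Y)
\]
for all $i \geq 1$, $k \in \mathbb{Z}$ and $Y \in \mathcal{Y}$.

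Iterating this isomorphism and using $Z_0(A_\bullet) \simeq M$ yields the two identities that drive all the equivalences:
\begin{align*}
\Ext^i(Z_k(A_\bullet), Y) & \cong \Ext^{i+k}(M, Y) \quad (i \geq 1, \ 0 \leq k \leq m), \\
\Ext^i(M, Y) & \cong \Ext^{i+m}(M, Y) \quad (i \geq 1),
\end{align*}
the second being obtained by setting $k = m$ in the first and invoking $Z_m(A_\bullet) \simeq M$ (i.e., going once around the loop). The first identity translates each vanishing condition on a cycle $Z_k$ into the vanishing of specific groups $\Ext^j(M, -)$, while the second supplies $m$-periodicity of these groups.

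With these identities in hand, the implications (a) $\Rightarrow$ (b) $\Rightarrow$ (c), (a) $\Rightarrow$ (d) $\Rightarrow$ (e), and (a) $\Rightarrow$ (f) are immediate, so only the reverse implications from (c), (e) and (f) to (a) remain. In each case the hypothesis translates (via the first identity) into the vanishing of $\Ext^j(M, Y)$ on a set of integers $j$ that either spans an unbounded tail, as in (c) where one gets $\Ext^{\geq k+1}(M, Y) = 0$, or contains a full period of $m$ consecutive integers, as in (e) (range $[i+1, i+m]$) and (f) (also $[i+1, i+m]$). The $m$-periodicity then propagates the vanishing to all $j \geq 1$: in the tail case, by decreasing $j$ in steps of $m$ from any $j$ with $j \geq k+1$; in the block case, because every integer $j \geq 1$ is congruent modulo $m$ to some element of the consecutive block of length $m$.

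The only part requiring genuine care is the index bookkeeping in the last step, namely verifying that a block of $m$ consecutive vanishing indices indeed covers all residues modulo $m$ (so that periodicity forces vanishing on every $j \geq 1$, including those below the starting index $i+1$). I do not expect any serious obstacle: once the shifting identities and periodicity are established, the remaining implications follow essentially mechanically.
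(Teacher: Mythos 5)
Your proof is correct and follows essentially the same route as the paper: both reduce everything to the shifting isomorphisms of Lemma \ref{lem:shifting} applied around the loop (which is exactly what the diagram \eqref{fig1} encodes), and both exploit the resulting $m$-periodicity $\Ext^i(M,Y)\cong\Ext^{i+m}(M,Y)$ to propagate vanishing from a tail or a block of $m$ consecutive indices to all $i\geq 1$. The only superficial difference is in handling $\mathcal{Y}=\mathcal{B}^\wedge$: the paper deduces it from the $\mathcal{B}$ case via a citation to \cite[Prop.~2.6]{HMP}, whereas you re-prove inline that $\pd_{\mathcal{B}}(\mathcal{A})=0$ implies $\pd_{\mathcal{B}^\wedge}(\mathcal{A})=0$ and then run the same argument uniformly for both choices of $\mathcal{Y}$; both are standard and equivalent.
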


\begin{proof}
The validity of the case $\mathcal{Y} = \mathcal{B}$ can be traced back using the diagram \eqref{fig1}. This, along with \cite[Prop. 2.6]{HMP}, implies the case $\mathcal{Y} = \mathcal{B}^\wedge$. 
\end{proof}

Acyclicity of $\mathcal{A}$-loops under $\Hom(\mcA,-)$ can be characterized in terms of its cycles for the case where $\mathcal{A}$ is rigid, as the following result shows.

\begin{proposition}\label{prop:ex}
Let $\mcA \subseteq \mcC$ be an $(n+1)$-rigid class for $n\geq 1$, and $A_\bullet \in \Ch(\mcC)$ be an $\mcA$-loop at $M \in \mcC$ of length $m$. Consider the following assertions:
\begin{enumerate}[(a)]
\item The bounded complex \eqref{eqn:loop} is $\Hom(\mcA, -)$-acyclic.

\item $Z_{k}(A_\bullet) \in \bigcap_{i=1}^{n}\mcA^{\perp_{i}}$ for every $1\leq k\leq m$.

\item $Z_k(A_\bullet) \in \mcA^{\perp_1}$ for every $1 \leq k \leq m$.

\item $M \in \bigcap_{i=1}^{n} \mcA^{\perp_{i}}$.
\end{enumerate}
Then, the implications (a) $\Leftrightarrow$ (b) $\Leftrightarrow$ (c) $\Rightarrow$ (d) hold true. If in addition $m \leq n$, then all the assertions are equivalent.
\end{proposition}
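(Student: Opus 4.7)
The plan is to reduce every assertion to vanishing statements for the extension groups $\Ext^i(A, Z_k(A_\bullet))$ with $A \in \mcA$, and then to move between the indices $i$ and $k$ via the shifting isomorphisms of Lemma \ref{lem:shifting}, exploiting the periodicity $Z_{j+m}(A_\bullet) \simeq Z_j(A_\bullet)$ of an $\mcA$-loop (so in particular $Z_0(A_\bullet) \simeq Z_m(A_\bullet) \simeq M$). Throughout, note that $(n+1)$-rigidity with $n \geq 1$ implies $\Ext^1(\mcA, \mcA) = 0$, which ensures clean long exact sequences after splitting \eqref{eqn:loop}.

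For (a) $\Leftrightarrow$ (c), I would split the bounded exact complex \eqref{eqn:loop} into the short exact sequences
\[
0 \to Z_k(A_\bullet) \to A_k \to Z_{k-1}(A_\bullet) \to 0, \qquad 1 \leq k \leq m.
\]
Applying $\Hom(A,-)$ with $A \in \mcA$ and using $\Ext^1(A, A_k) = 0$, each of these remains exact after $\Hom(A,-)$ if and only if $\Ext^1(A, Z_k(A_\bullet)) = 0$. Splicing them back together, $\Hom(\mcA,-)$-acyclicity of \eqref{eqn:loop} is equivalent to $\Ext^1(A, Z_k(A_\bullet)) = 0$ for every $A \in \mcA$ and $1 \leq k \leq m$, which is precisely (c). The implication (b) $\Rightarrow$ (c) is trivial, and for (c) $\Rightarrow$ (b) I would iterate the first shifting isomorphism of Lemma \ref{lem:shifting} to obtain, for every $A \in \mcA$ and $2 \leq i \leq n$,
\[
\Ext^i(A, Z_k(A_\bullet)) \simeq \Ext^{i-1}(A, Z_{k-1}(A_\bullet)) \simeq \cdots \simeq \Ext^1(A, Z_{k-i+1}(A_\bullet)).
\]
By the $m$-periodicity of the loop, every such $Z_{k-i+1}(A_\bullet)$ is isomorphic to some $Z_j(A_\bullet)$ with $1 \leq j \leq m$, which by (c) lies in $\mcA^{\perp_1}$. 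Hence (b) follows; specializing to $k = m$ with $Z_m(A_\bullet) \simeq M$ yields (d), so also (c) $\Rightarrow$ (d).

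Finally, assume $m \leq n$ and (d). For $1 \leq k \leq m$, I would apply the shifting isomorphism in the opposite direction to obtain
\[
\Ext^1(A, Z_k(A_\bullet)) \simeq \Ext^{2}(A, Z_{k+1}(A_\bullet)) \simeq \cdots \simeq \Ext^{m-k+1}(A, Z_m(A_\bullet)) \simeq \Ext^{m-k+1}(A, M).
\]
Each intermediate upper index lies in $\{1, \ldots, m-1\} \subseteq \{1, \ldots, n-1\}$, so every step is a legitimate shift, and $1 \leq m-k+1 \leq m \leq n$ makes the last group vanish by (d). Hence (c) holds, completing the equivalence under $m \leq n$. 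The delicate point, and the only place where the hypothesis $m \leq n$ is essential, is that the chain of shifts must stay within the range $1 \leq i \leq n - 1$ granted by $(n+1)$-rigidity; if $m > n$, the shifting would have to overshoot this range and the argument would break down.
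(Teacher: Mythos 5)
Your proof is correct and follows essentially the same route as the paper: split the loop into short exact sequences, reduce $\Hom(\mcA,-)$-acyclicity to the vanishing of $\Ext^1(A, Z_k(A_\bullet))$, and then chain the shifting isomorphisms of Lemma~\ref{lem:shifting} to move between $i$ and $k$, wrapping around via $Z_0 \simeq Z_m \simeq M$. One small caveat worth noting: Definition~\ref{def:loop} only guarantees $Z_{km}(A_\bullet) \simeq M$, not the full periodicity $Z_{j+m}(A_\bullet) \simeq Z_j(A_\bullet)$ that you invoke at the outset; this is harmless here because all four statements depend only on the bounded complex \eqref{eqn:loop} and its cycles $Z_0,\ldots,Z_m$, so one may replace $A_\bullet$ by the truly periodic loop obtained by gluing copies of \eqref{eqn:loop} as in Remark~\ref{rem:loop} (the paper's own argument tacitly makes the same reduction, phrasing it instead as regarding the length-$m$ loop as one of length $mq$).
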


\begin{proof} \
\begin{itemize}
\item (a) $\Rightarrow$ (b): Suppose first that \eqref{eqn:loop} is $\Hom(\mcA, -)$-acyclic, and consider for each $1\leq k\leq m$ the short exact sequence $Z_{k}(A_\bullet) \rightarrowtail A_{k} \twoheadrightarrow Z_{k-1}(A_\bullet)$. Lemma \ref{lem:shifting} implies that $\Ext^i(\mcA, Z_{k-1}(A_\bullet)) \cong \Ext^{i+1}(\mcA, Z_k(A_\bullet))$ for every $1 \leq i \leq n-1$. One the other hand, the $\Hom(\mcA,-)$-acyclicity of the previous sequence yields $\Ext^1(\mcA, Z_{k}(A_\bullet)) = 0$. So the result follows. 

\item (b) $\Rightarrow$ (c) $\Rightarrow$ (a): Immediate. 

\item (c) $\Rightarrow$ (d): We need to consider the cases $n \leq m$ and $n > m$. The former follows by the dual of \eqref{fig1}. On the other hand, since $A_\bullet$ is an $\mcA$-loop at $M$ of length $m$, it is also an $\mcA$-loop at $M$ of length $mq$ for every $q > 0$. Thus, the case $n > m$ follows as the previous one by taking $q > 0$ such that $n \leq mq$. 

\item (d) $\Rightarrow$ (c): It is a consequence of the dual of \eqref{fig1} for the case $m \leq n$.
\end{itemize}
\end{proof}







\section{Relative (weakly) periodic Gorenstein objects}\label{sec: m-period}

In \cite{BM}, Bennis and Mahdou introduce a generalization for strongly Gorenstein projective modules (a notion also due to them, see \cite{BM2}) called $n$-strongly Gorenstein projective modules. In this section, we present the concept of $m$-periodic Gorenstein projective and injective objects relative to a pair $(\mathcal{A,B})$ of classes of objects in an abelian category $\mcC$, with the aim to extend the  concepts in \cite{BM2,BM} and study their properties resulting from several assumptions on $(\mathcal{A,B})$. The language of loop complexes will be key towards this goal. 

We propose the following relativization of $n$-strongly Gorenstein projective modules \cite[Def. 3.1]{BM}.

\begin{definition}\label{def:sGP}
We say that an object $M \in \mcC$ is \textbf{periodic $\bm{(\mcA,\mcB)}$-Gorenstein projective of period $\bm{m}$} (or \textbf{$\bm{m}$-periodic $\bm{(\mcA,\mcB)}$-Gorenstein projective}, for short) if there exists a $\Hom(-,\mcB)$-acyclic $\mcA$-loop at $M$ of length $m$. We denote by $\pi\mathcal{GP}_{(\mcA,\mcB,m)}$ the class of $m$-periodic $(\mcA,\mcB)$-Gorenstein projective objects in $\mcC$. Equivalently, we can say that $M \in \pi\mathcal{GP}_{(\mcA,\mcB,m)}$ if there exists a bounded exact and $\Hom(-,\mcB)$-acyclic complex as \eqref{eqn:loop}. The \textbf{$\bm{m}$-periodic $\bm{(\mcA,\mcB)}$-Gorenstein injective objects} are defined dually, and form a class of objects denoted by $\pi\mathcal{GI}_{(\mcA,\mcB,m)}$.
\end{definition}

We shall also work with the following particular family of relative $m$-periodic Gorenstein objects.

\begin{definition}\label{def:wsGP}
We say that $M \in \mcC$ is \textbf{weakly $\bm{m}$-periodic $\bm{(\mcA,\mcB)}$-Gorenstein projective} if there exists an $\mcA$-loop at $M$ of length $m$ with cycles in ${}^{\perp}\mcB$. We denote the class of weakly $m$-periodic $(\mcA,\mcB)$-Gorenstein projective objects by $\pi\mathcal{WGP}_{(\mcA,\mcB,m)}$. The \textbf{weakly $\bm{m}$-periodic $\bm{(\mcA,\mcB)}$-Gorenstein injective objects} are defined dually, and the class of such objects will be denoted by $\pi\mathcal{WGI}_{(\mcA,\mcB,m)}$.\footnote{The term ``weakly'' has to do with the concept of weak $(\mcA,\mcB)$-Gorenstein projective objects studied in \cite[Def. 3.11]{BMS}. The relation
between these notions will be studied in the following section.}
\end{definition}

\begin{remark}\label{rem:sGP}
{} \
\begin{enumerate}
\item For any ring $R$ and setting $\mcA = \mcB = \mcP(R)$, we have that $\pi\mathcal{GP}_{(\mcA,\mcB,m)}$ coincides with the class of $m$-strongly Gorenstein projective $R$-modules.

\item For any $m \geq 1$, one has the containments 
\[
\pi\mathcal{WGP}_{(\mcA,\mcB,1)} \subseteq \pi\mathcal{WGP}_{(\mcA,\mcB,m)} \text{ \ and \ }  \pi\mathcal{GP}_{(\mcA,\mcB,1)} \subseteq \pi\mathcal{GP}_{(\mcA,\mcB,m)},
\]
which may be strict (see Example \ref{ex: piGPm->piGP1}). 

\item The containment 
\[
\pi\mathcal{WGP}_{(\mcA,\mcB,m)} \subseteq \pi\mathcal{GP}_{(\mcA,\mcB,m)}
\] 
holds for any pair of classes $\mcA,\mcB \subseteq \mcC$. Indeed, it suffices to note that each sequence $Z_j(A_\bullet) \rightarrowtail A_j \twoheadrightarrow Z_{j-1}(A_\bullet)$ with $Z_j(A_\bullet), Z_{j-1}(A_\bullet) \in {}^{\perp}\mcB$ is $\Hom(-,\mcB)$-acyclic. This containment may be strict, as we shall see in Corollary~\ref{cor: SWGPclustertilting}. 
\begin{figure}[H]
\centering
\includegraphics[width=0.75\textwidth]{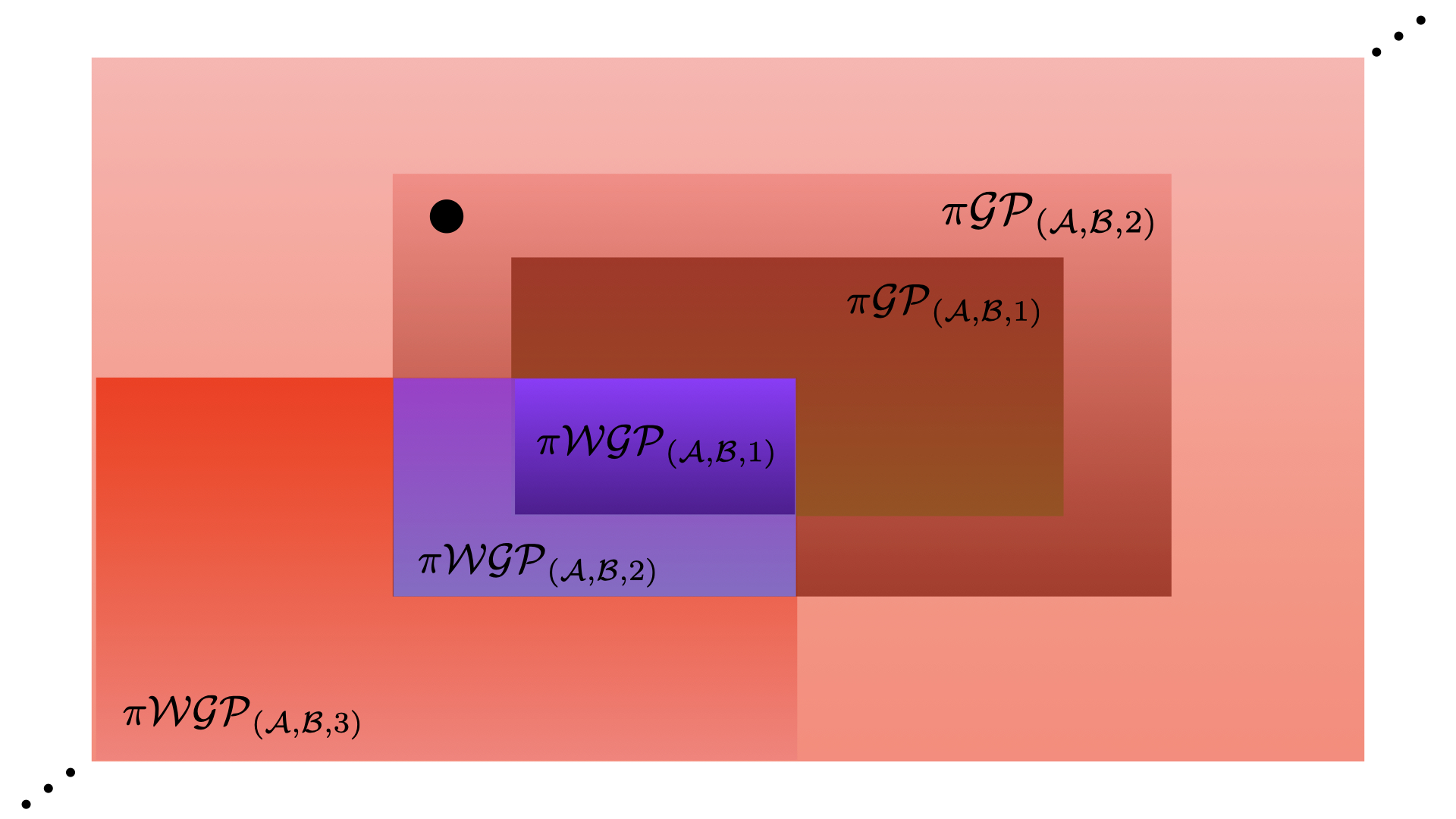}
\caption{A depiction of the containment relations in (2) and (3) for periodic and weak periodic Gorenstein objects. Note that the equality $\pi\mathcal{WGP}_{(\mathcal{A,B},2)} \cap \pi\mathcal{GP}_{(\mathcal{A,B},1)} = \pi\mathcal{WGP}_{(\mathcal{A,B},1)}$ is represented in the figure. On the other hand, the dot represents a 2-periodic $(\mathcal{A,B})$-Gorenstein projective object which is not 1-periodic $(\mathcal{A,B})$-Gorenstein projective (see for instance Example \ref{ex: piGPm->piGP1} and Corollary \ref{cor: SWGPclustertilting}).}
\end{figure}

\item The containment 
\begin{align}
\mcA & \subseteq \pi\mathcal{GP}_{(\mcA,\mcB,m)} \label{eqn:AGP}
\end{align}
holds for any even positive integer $m$. Indeed, in the case $m = 2$ for instance, we have for every $A \in \mcA$ the exact and $\Hom(-,\mcB)$-acyclic sequence
\[
A \stackrel{\mathrm{Id}_{A}}\rightarrowtail A \stackrel{0}\to A \stackrel{\mathrm{Id}_{A}}\twoheadrightarrow A,
\]
which shows that $A \in \pi\mathcal{GP}_{(\mathcal{A, B},2)}$. If in addition $\Ext^{\geq 1}(\mcA,\mcB) = 0$, then we also have that 
\begin{align}
\mcA & \subseteq \pi\mathcal{WGP}_{(\mcA,\mcB,m)} \subseteq \pi\mathcal{GP}_{(\mcA,\mcB,m)}. \label{eqn:AWGP}
\end{align}

If $\mcA$ is closed under finite coproducts, we can note that $\mcA \subseteq \pi\mathcal{GP}_{(\mcA,\mcB,1)}$ and  $\mcA \subseteq \pi\mathcal{WGP}_{(\mcA,\mcB,1)}$ (provided $\Ext^{\geq 1}(\mcA,\mcB) = 0$). Hence, by (2) we have that \eqref{eqn:AGP} and \eqref{eqn:AWGP} are valid for every $m \geq 1$ under this assumption. Moreover, \eqref{eqn:AGP} may be strict as it was shown in \cite[Ex. 2.5]{BM2} by taking $m = 1$ and $\mcA = \mcB = \mcP(R)$. For the relative case with $m>1$, see Example \ref{ex: piGPm->piGP1}.

\item One has the implications
\begin{align*} 
\mcB \subseteq \mcB' & \Rightarrow \pi\mathcal{GP}_{(\mcA,\mcB',m)} \subseteq \pi\mathcal{GP}_{(\mcA,\mcB,m)}, \\
\mcA \subseteq \mcA' & \Rightarrow \pi\mathcal{GI}_{(\mcA',\mcB,m)} \subseteq \pi\mathcal{GI}_{(\mcA,\mcB,m)}.
\end{align*}

\item The class $\pi\mathcal{GP}_{(\mcA,\mcB,m)}$ is not uniquely determined by the pair $(\mcA,\mcB)$ (see   Example \ref{ex:sGP}-(3)). 

\item If $M \in \pi\mathcal{GP}_{(\mcA,\mcB,m)}$ and $A_\bullet$ is a $\Hom(-,\mcB)$-acyclic $\mcA$-loop at $M$ of length $m$, then $Z_k(A_\bullet) \in \pi\mathcal{GP}_{(\mcA,\mcB,m)}$ for every $k \in \mathbb{Z}$.
\end{enumerate}
\end{remark}

\begin{example}\label{ex:sGP}
{} \
\begin{enumerate}
\item The facts mentioned in this example come from Zhang and Xiong's \cite[Ex. 5.3-(2)]{ZX}. Let $k$ be a field and $\Lambda$ be the path algebra over $k$ given by the quiver 
\[
\xymatrix{
1 \ar@/^/[r]^{\alpha} & 2\ar@/^/[l]^{\beta} & 3\ar[l]^{\gamma}
}
\]
with relations $\alpha \beta = 0 = \beta \alpha$. The indecomposable projective $\Lambda$-modules are 
\begin{align*}
P(1) & = \begin{tiny}\begin{array}{c} 1 \\ 2 \end{array} \end{tiny}, & P(2) & = \begin{tiny} \begin{array}{c} 2 \\ 1 \end{array} \end{tiny}, & & \text{and} & P(3) & = \begin{tiny} \begin{array}{c} 3 \\ 2 \\ 1 \end{array} \end{tiny},
\end{align*}
while the indecomposable injective $\Lambda$-modules are given by
\begin{align*}
I(1) & = \begin{tiny}\begin{array}{c} 3 \\ 2 \\ 1 \end{array}\end{tiny}, & I(2) & = \begin{tiny}\begin{array}{ccc} 1 & {} & 3 \\ {} & 2 \end{array}\end{tiny}, & & \text{and} & I(3) & = \begin{tiny}\begin{array}{c} 3 \end{array}\end{tiny}.
\end{align*}
On the other hand, the Auslander-Reiten quiver of $\Lambda$ is
\[
\xymatrix@R=0.3cm{
 & & {\begin{tiny}
\begin{array}{c}
3 \\
2\\
1
\end{array}
\end{tiny}}\ar[rd] & & & \\
 & {\begin{tiny}
\begin{array}{c}
2\\
1
\end{array}
\end{tiny}}\ar[ru] \ar[rd] \ar@{--}[rr] & & {\begin{tiny}
\begin{array}{c}
3\\
2
\end{array}
\end{tiny}}\ar[rd] \ar[rd] \ar@{--}[rr] & & \begin{tiny}
\begin{array}{c}1\end{array}
\end{tiny} \\
\begin{tiny}
\begin{array}{c} 1 \end{array}
\end{tiny} \ar[ru] \ar@{--}[rr] & & \begin{tiny}
\begin{array}{c} 2 \end{array}
\end{tiny} \ar[ru] \ar[rd] \ar@{--}[rr] & & {\begin{tiny}
\begin{array}{ccc}
1& &3\\
&2&
\end{array}
\end{tiny}}\ar[ru] \ar[rd] & \\
 & & & {\begin{tiny}
\begin{array}{c}
1\\
2
\end{array}
\end{tiny}}\ar[ru] \ar@{--}[rr] & & \begin{tiny}
\begin{array}{c} 3 \end{array}
\end{tiny}
}
\]
where the vertices $i$ represent the same simple module $S(i)$. Consider the class 
\[
\mcX = \add(\begin{tiny}
\begin{array}{c}
1
\end{array}
\end{tiny} \oplus \begin{tiny}
\begin{array}{c}
2\\
1
\end{array}
\end{tiny} \oplus \begin{tiny}
\begin{array}{c}
2
\end{array}
\end{tiny} \oplus \begin{tiny}
\begin{array}{c}
1\\
2
\end{array}
\end{tiny})
\] 
of $\Lambda$-modules isomorphic to direct summands of finite direct sums of $\begin{tiny}
\begin{array}{c}
1
\end{array}
\end{tiny} \oplus \begin{tiny}
\begin{array}{c}
2\\
1
\end{array}
\end{tiny} \oplus \begin{tiny}
\begin{array}{c}
2
\end{array}
\end{tiny} \oplus \begin{tiny}
\begin{array}{c}
1\\
2
\end{array}
\end{tiny}$. 
Then, $\mcX$ is a class of objects in $\modu(\Lambda)$ closed under extensions. Moreover, $\mcX$ is a Frobenius subcategory of $\modu(\Lambda)$ where the indecomposable projective-injective objects are precisely $\begin{tiny}
\begin{array}{c}
1\\
2
\end{array}
\end{tiny}$ and $\begin{tiny}
\begin{array}{c}
2 \\
1
\end{array}
\end{tiny}$. Thus, the class of projective objects in $\mcX$ is given by 
\[
\mcP(\mcX) = \add(P(1) \oplus P(2)).
\] 
Note also that 
\[
\mcP(\modu(\Lambda)) = \add(P(1) \oplus P(2) \oplus P(3)).
\]

If we set $\mcA = \mcB = \mcP(\mcX)$, note that $P(3)$ is a $1$-strongly Gorenstein projective $\Lambda$-module by Remark \ref{rem:sGP}-(4). However $P(3) \not\in \pi\mathcal{GP}_{(\mcA, \mcB, m)}$ for any $m\geq 1$. Indeed,
if we suppose that $P(3) \in \pi\mathcal{GP}_{(\mcA, \mcB, m)}$ for some $m \geq 1$ there is a short exact sequence 
\[
P(3) \rightarrowtail A_{m} \twoheadrightarrow K
\] 
where $A_m \in \mcA=\mcP(\mcX)$, which is split since $P(3) = I(1)$. It follows that $P(3) \in \mcX$, getting a contradiction. 

\item Recall that an $R$-module is pure projective if it is a direct summand of a direct sum of finitely presented modules (see Warfield's \cite[Coroll. 3]{Warfieldpurity}). If $\mathcal{PP}(R)$ denotes the class of pure projective $R$-modules, then $\mathcal{PP}(R) = {\rm Add}(\mathcal{FP}_1(R))$, where $\mathcal{FP}_1(R)$ denotes the class of finitely presented $R$-modules. We can note that any $m$-periodic $(\mathcal{FP}_1(R),\mathcal{PP}(R))$-Gorenstein injective $R$-module is pure projective, that is,
\[
\pi\mathcal{GI}_{(\mathcal{FP}_1(R),\mathcal{PP}(R),m)} \subseteq \mathcal{PP}(R).
\]
To show this, suppose we are given an $R$-module $M \in \pi\mathcal{GI}_{(\mathcal{FP}_1(R),\mathcal{PP}(R),m)}$, so there is an exact and pure exact sequence 
\[
M \rightarrowtail P_m \to \cdots \to P_1 \twoheadrightarrow M
\]
where each $P_k$ is pure projective. It follows by Bazzoni, Cort\'es-Izurdiaga and Estrada's \cite[Thm. 5.1-(3)]{BCE} that $M$ is pure projective. The other containment holds as well (see the dual of Remark \ref{rem:sGP}-(4)).

\item Let $\mcC(R) = \mcF(R)^{\perp_1}$ denote the class of cotorsion modules. We show that 
\[
\pi\mathcal{GI}_{(\mcF(R),\mcC(R),m)} = \pi\mathcal{GI}_{(\mcP(R),\mcC(R),m)} = \mcC(R).
\]
The containment $\pi\mathcal{GI}_{(\mcF(R),\mcC(R),m)} \subseteq \pi\mathcal{GI}_{(\mcP(R),\mcC(R),m)}$ is clear by Remark \ref{rem:sGP}-(5). On the other hand, for every $M \in \pi\mathcal{GI}_{(\mcP(R),\mcC(R),m)}$ we have $M \simeq Z_{km}(C_\bullet)$ for every $k \in \mathbb{Z}$, where $C_\bullet$ is a cotorsion loop  of length $m$ (in particular, an exact complex of cotorsion $R$-modules). By \cite[Thm. 5.1-(2)]{BCE}, we also know that $C_\bullet$ has cotorsion cycles. In particular, $M \in \mcC(R)$. The containment $\mcC(R) \subseteq \pi\mathcal{GI}_{(\mcF(R),\mcC(R),m)}$ follows by the dual of Remark \ref{rem:sGP}-(4).

\item In (2), let us replace $\mathcal{PP}(R)$ by the class $\mathcal{I}(R)$ of injective $R$-modules. For every $m \geq 1$, one has that every $m$-periodic $(\mathcal{FP}_1(R),\mathcal{I}(R))$-Gorenstein injective $R$-module is FP-injective. Indeed, for every $M \in \pi\mathcal{GI}_{(\mathcal{FP}_1(R),\mathcal{I}(R),m)}$ there is a pure exact complex $\cdots \to I_2 \to I_1 \twoheadrightarrow M$ where $I_k$ is injective for every $k \geq 1$, implying thus that $\Ext^1_{R}(F,M) = 0$ for every finitely presented $R$-module $F$. Hence, $M$ is FP-injective. In other words, we have the containment 
\begin{align*}
\pi\mathcal{GI}_{(\mathcal{FP}_1(R),\mathcal{I}(R),m)} & \subseteq {\rm FP}\mbox{-}\mcI(R).
\end{align*}
The converse containment does not hold in general, and its validity is part of the following characterization of noetherian rings.
\end{enumerate}
\end{example}

\begin{proposition}\label{prop:characterization_noetherian}
The following assertions are equivalent:
\begin{enumerate}[(a)]
\item $R$ is a (left) noetherian ring. 

\item ${\rm FP}\mbox{-}\mcI(R) = \mcI(R)$.

\item ${\rm FP}\mbox{-}\mcI(R) = \pi\mathcal{GI}_{(\mathcal{FP}_1(R),\mcI(R),m)}$ for every $m \geq 1$

\item ${\rm FP}\mbox{-}\mcI(R) = \pi\mathcal{GI}_{(\mathcal{FP}_1(R),\mcI(R),m)}$ for some $m \geq 1$. 
\end{enumerate}
\end{proposition}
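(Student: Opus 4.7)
The plan is to establish the cycle of implications (a) $\Leftrightarrow$ (b) $\Rightarrow$ (c) $\Rightarrow$ (d) $\Rightarrow$ (b). The equivalence (a) $\Leftrightarrow$ (b) is the classical Bass--Papp theorem --- a ring is left noetherian if and only if every FP-injective (absolutely pure) module is injective --- which I invoke as a black box.

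For (b) $\Rightarrow$ (c), the containment $\pi\mathcal{GI}_{(\mathcal{FP}_1(R),\mcI(R),m)} \subseteq {\rm FP}\mbox{-}\mcI(R)$ has already been established in Example \ref{ex:sGP}-(4). For the reverse, given $M \in {\rm FP}\mbox{-}\mcI(R)$, hypothesis (b) forces $M \in \mcI(R)$; since $\mcI(R)$ is closed under finite direct sums, the dual of Remark \ref{rem:sGP}-(4) (applied to the class $\mcB = \mcI(R)$ of components) yields $M \in \pi\mathcal{GI}_{(\mathcal{FP}_1(R),\mcI(R),m)}$ for every $m \geq 1$. The implication (c) $\Rightarrow$ (d) is immediate.

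The core of the proof is (d) $\Rightarrow$ (b). Assume (d) holds for some fixed $m \geq 1$, and let $M \in {\rm FP}\mbox{-}\mcI(R)$. By hypothesis, $M$ sits inside a $\Hom(\mathcal{FP}_1(R),-)$-acyclic $\mcI(R)$-loop of length $m$, say $0 \to M \to I_m \to \cdots \to I_1 \to M \to 0$; this acyclicity condition is precisely pure acyclicity. Glueing countably many copies of this loop together produces an unbounded pure acyclic complex of injective --- and hence pure-injective --- modules in which $M$ appears periodically as a cycle. The key input is the theorem (due to \v{S}\v{t}ov\'{\i}\v{c}ek; see also Emmanouil) that every pure acyclic complex of pure-injective modules is contractible. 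Applying it here, each short exact sequence of cycles splits; in particular the pure monomorphism $M \hookrightarrow I_m$ splits, exhibiting $M$ as a direct summand of the injective module $I_m$, and hence $M$ is itself injective. This proves ${\rm FP}\mbox{-}\mcI(R) = \mcI(R)$, which is (b). The principal obstacle is precisely this appeal to the contractibility result, which, although well-established in the theory of pure-derived categories, is the non-trivial technical ingredient powering the whole proof; a self-contained alternative would be to use the character duality $(-)^\sharp = \Hom_{\mathbb{Z}}(-,\mathbb{Q}/\mathbb{Z})$ to convert the pure loop into a split loop of flat modules and then argue via a Schanuel-type comparison, but this is considerably more laborious.
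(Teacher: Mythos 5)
Your proposal is correct and follows essentially the same route as the paper. The equivalence (a) $\Leftrightarrow$ (b) is handled the same way (the paper cites Megibben, you cite Bass--Papp --- same classical theorem), the chain (b) $\Rightarrow$ (c) $\Rightarrow$ (d) is identical, and for (d) $\Rightarrow$ (b) you and the paper both reduce to the fact that a pure acyclic complex of injective modules is contractible: you cite \v{S}\v{t}ov\'{\i}\v{c}ek/Emmanouil directly, while the paper cites the equivalent formulation \cite[Thm.~5.1-(1)]{BCE} (an exact complex of injectives with FP-injective cycles is an injective complex), noting that the $\Hom(\mathcal{FP}_1(R),-)$-acyclicity forces the cycles to be FP-injective; these are just two ways of invoking the same underlying result.
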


\begin{proof}
The equivalence between (a) and (b) is proven, for instance, in Megibben's \cite[Thm. 3]{Megibben}. The implication (b) $\Rightarrow$ (c) follows by Example~\ref{ex:sGP}-(4) since
\[
\mcI(R)\subseteq \pi\mathcal{GI}_{(\mathcal{FP}_1(R),\mcI(R),m)} \subseteq {\rm FP}\mbox{-}\mcI(R)=\mcI(R),
\]
while (c) $\Rightarrow$ (d) is clear. Let us show (d) $\Rightarrow$ (b). So suppose every FP-injective $R$-module $E$ is $m$-periodic $(\mathcal{FP}_1(R),\mcI(R))$-Gorenstein injective. Then, $E \simeq Z_{0}(I_\bullet)$ for some $\Hom(\mathcal{FP}_1(R),-)$-acyclic injective loop $I_\bullet$ of length $m$. In particular, $I_\bullet$ is an exact complex of injective $R$-modules with FP-injective cycles. By \cite[Thm. 5.1-(1)]{BCE}, $I_\bullet$ is injective, and hence $E$ is an injective $R$-module (being a cycle of an injective complex).
\end{proof}


\section{Relation with the condition $\pd_{\mcB}(\mcA) = 0$ and periodic objects}\label{sec: hereditary}

In the following lines we prove some properties of $m$-periodic Gorenstein objects related to pairs $(\mathcal{A,B})$ satisfying $\pd_{\mcB}(\mcA) = 0$. First, we show that this condition characterizes when the classes of $m$-periodic and weakly $m$-periodic Gorenstein projective objects, relative to $(\mathcal{A,B})$, coincide.

\begin{proposition}\label{pro:WSGP=SGP}
The following are equivalent:
\begin{enumerate}[(a)]
\item ${\rm pd}_{\mcB}(\mcA) = 0$;

\item $\pi\mathcal{WGP}_{(\mathcal{A, B},m)}=\pi\mathcal{GP}_{(\mathcal{A, B},m)}$ for any $m\geq 1$;

\item $\pi\mathcal{WGP}_{(\mathcal{A, B},2)}=\pi\mathcal{GP}_{(\mathcal{A, B},2)}$.
\end{enumerate}
\end{proposition}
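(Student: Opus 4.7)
The plan is to establish $(a) \Rightarrow (b) \Rightarrow (c) \Rightarrow (a)$; the middle implication is immediate by specializing $m=2$.

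For $(a) \Rightarrow (b)$, the inclusion $\pi\mathcal{WGP}_{(\mcA,\mcB,m)} \subseteq \pi\mathcal{GP}_{(\mcA,\mcB,m)}$ holds with no assumption by Remark~\ref{rem:sGP}(3), so only the reverse containment requires proof. Let $M \in \pi\mathcal{GP}_{(\mcA,\mcB,m)}$, witnessed by a $\Hom(-,\mcB)$-acyclic $\mcA$-loop $A_\bullet$ at $M$ of length $m$. For every short exact sequence $Z_k(A_\bullet) \rightarrowtail A_k \twoheadrightarrow Z_{k-1}(A_\bullet)$ and every $B \in \mcB$, applying $\Hom(-,B)$ yields the long exact segment
\[
\Hom(A_k,B) \to \Hom(Z_k(A_\bullet),B) \to \Ext^1(Z_{k-1}(A_\bullet),B) \to \Ext^1(A_k,B).
\]
The leftmost arrow is surjective by $\Hom(-,\mcB)$-acyclicity of $A_\bullet$, and the rightmost group vanishes by (a); hence $\Ext^1(Z_{k-1}(A_\bullet),B) = 0$. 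Thus every cycle of $A_\bullet$ lies in ${}^{\perp_1}\mcB$. Since hypothesis (a) is precisely the standing hypothesis of Corollary~\ref{cor:Li^perp=M^perp}, the implication $(d) \Rightarrow (b)$ of that corollary upgrades this to every cycle lying in ${}^{\perp}\mcB$. This exhibits $M$ as a weakly $m$-periodic $(\mcA,\mcB)$-Gorenstein projective object.

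For $(c) \Rightarrow (a)$, I use the identity-loop construction recalled in Remark~\ref{rem:sGP}(4): for every $A \in \mcA$, the sequence $A \stackrel{\mathrm{Id}_A}\rightarrowtail A \stackrel{0}\to A \stackrel{\mathrm{Id}_A}\twoheadrightarrow A$ is an exact and $\Hom(-,\mcB)$-acyclic $\mcA$-loop at $A$ of length $2$, so $A \in \pi\mathcal{GP}_{(\mcA,\mcB,2)}$. Hypothesis (c) then gives $A \in \pi\mathcal{WGP}_{(\mcA,\mcB,2)}$, and unwinding the definition, the witnessing loop has all its cycles in ${}^{\perp}\mcB$; in particular $A \in {}^{\perp}\mcB$. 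Since $A \in \mcA$ was arbitrary, $\mcA \subseteq {}^{\perp}\mcB$, i.e., $\pd_\mcB(\mcA) = 0$.

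The only non-routine step is the promotion from ${}^{\perp_1}\mcB$-membership to ${}^{\perp}\mcB$-membership of the cycles in $(a) \Rightarrow (b)$; this is handled cleanly by Corollary~\ref{cor:Li^perp=M^perp}, and the rest is a direct long-exact-sequence chase together with the standard identity-loop trick. I do not foresee any genuine obstacle.
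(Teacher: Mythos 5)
Your proof is correct and follows essentially the same route as the paper's (Corollary~\ref{cor:Li^perp=M^perp} plus Remark~\ref{rem:sGP}(3) for $(a)\Rightarrow(b)$, and the identity-loop construction of Remark~\ref{rem:sGP}(4) for $(c)\Rightarrow(a)$); the only difference is that you spell out the long-exact-sequence step landing cycles in ${}^{\perp_1}\mcB$ before invoking the corollary, which the paper leaves implicit.
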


\begin{proof} 
The implication (a) $\Rightarrow$ (b) follows by Corollary \ref{cor:Li^perp=M^perp} and  Remark \ref{rem:sGP}-(3), while (b) $\Rightarrow$ (c) is clear. Finally, for (c) $\Rightarrow$ (a) is a consequence of Remark \ref{rem:sGP}-(4).  
\end{proof}

The previous result and Corollary~\ref{cor:Li^perp=M^perp} allow us to give an alternative description for $m$-periodic $(\mcA, \mcB)$-Gorenstein projective objects by combining $\Hom(-, \mcB)$-acyclic exact complexes and the vanishing of extension groups related to $\mcB^{\wedge}$.

\begin{corollary}\label{characterization2}
Let $\mcA, \mcB \subseteq \mcC$ such that $\pd_{\mcB}(\mcA) = 0$. Then, $M \in \pi\mathcal{GP}_{(\mathcal{A, B},m)}$ if, and only if, there exists an $\mcA$-loop $A_\bullet$ at $M$ of length $m$, and an integer $i > 0$ such that $\Ext^i(Z_k(A_\bullet),\mcB^\wedge) = 0$ for every $1 \leq k \leq m$. 
\end{corollary}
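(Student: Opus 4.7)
The plan is to obtain this corollary as an almost immediate splice of Proposition \ref{pro:WSGP=SGP} and Corollary \ref{cor:Li^perp=M^perp}, invoking the latter with $\mathcal{Y} = \mcB^\wedge$. The whole content is that, under the hypothesis $\pd_{\mcB}(\mcA) = 0$, the cycle-by-cycle total orthogonality that defines the weak class can be detected by a single-degree $\Ext$-vanishing, and the weak class coincides with $\pi\mcGP_{(\mcA,\mcB,m)}$.

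First I would handle the forward direction. Assuming $M \in \pi\mcGP_{(\mcA,\mcB,m)}$, Proposition \ref{pro:WSGP=SGP} upgrades this to $M \in \pi\mathcal{WGP}_{(\mcA,\mcB,m)}$, producing an $\mcA$-loop $A_\bullet$ at $M$ of length $m$ with $Z_k(A_\bullet) \in {}^{\perp}\mcB$ for every $1 \leq k \leq m$. Apply Corollary \ref{cor:Li^perp=M^perp} to $A_\bullet$ (which is allowed since $\pd_{\mcB}(\mcA) = 0$) with $\mathcal{Y} = \mcB^\wedge$: the equivalence (b) $\Leftrightarrow$ (e) there turns the total orthogonality $Z_k(A_\bullet) \in {}^{\perp}\mcB^\wedge$ into the existence of some integer $i \geq 1$ with $Z_k(A_\bullet) \in {}^{\perp_i}\mcB^\wedge$ for every $1 \leq k \leq m$, that is, $\Ext^i(Z_k(A_\bullet),\mcB^\wedge) = 0$. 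The only point to remember is that Corollary \ref{cor:Li^perp=M^perp} is stated uniformly for $\mathcal{Y} = \mcB$ and $\mathcal{Y} = \mcB^\wedge$, so the cycles lying in ${}^{\perp}\mcB$ (obtained from the weak hypothesis) and in ${}^{\perp}\mcB^\wedge$ (needed to feed into the statement) are equivalent.

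The reverse direction is symmetric. Given an $\mcA$-loop $A_\bullet$ at $M$ of length $m$ and an integer $i \geq 1$ with $\Ext^i(Z_k(A_\bullet),\mcB^\wedge) = 0$ for all $1 \leq k \leq m$, condition (e) of Corollary \ref{cor:Li^perp=M^perp} with $\mathcal{Y} = \mcB^\wedge$ is satisfied, so by (e) $\Rightarrow$ (b) each cycle $Z_k(A_\bullet)$ lies in ${}^{\perp}\mcB^\wedge \subseteq {}^{\perp}\mcB$. Thus $M \in \pi\mathcal{WGP}_{(\mcA,\mcB,m)}$, and another invocation of Proposition \ref{pro:WSGP=SGP} puts $M$ in $\pi\mcGP_{(\mcA,\mcB,m)}$.

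There is no real obstacle: the proof is a clean chaining of two previously established equivalences. The one subtlety worth highlighting in the write-up is why we may pass freely between orthogonality against $\mcB$ and against the larger class $\mcB^\wedge$ — this is exactly the strength gained by stating Corollary \ref{cor:Li^perp=M^perp} for both $\mathcal{Y} = \mcB$ and $\mathcal{Y} = \mcB^\wedge$, and it is what makes the single-degree formulation against $\mcB^\wedge$ possible while the hypothesis only controls $\mcB$.
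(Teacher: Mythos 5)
Your proof is correct and follows precisely the route the paper itself indicates: the paper offers no explicit proof for this corollary, only the preceding sentence stating that it follows from Proposition \ref{pro:WSGP=SGP} together with Corollary \ref{cor:Li^perp=M^perp}, which is exactly the splice you carry out. Your elaboration is careful about the one real subtlety, namely that the vanishing in the statement is against $\mcB^\wedge$ while the weak-hypothesis cycles initially land in ${}^\perp\mcB$; passing between the two is justified because ${}^\perp\mcB = {}^\perp\mcB^\wedge$ (the dimension-shifting fact quoted as \cite[Prop.\ 2.6]{HMP} in the paper's proof of Corollary \ref{cor:Li^perp=M^perp}), and you correctly flag this rather than sweeping it under the rug.
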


Following \cite{BCE}, let us propose the following generalization of periodic objects with respect to a class.

\begin{definition}
We say that an object $M \in \mcC$ is \textbf{$\bm{\mcA}$-periodic of period $\bm{m}$} if there exists an $\mcA$-loop at $M$ of length $m$. The class of these objects will be denoted by $\pi_m(\mcA)$. \\
\end{definition}

\begin{remark} ~\
\begin{enumerate}
\item In the case $\mcC := \Mod(R)$ and $m = 1$, $\mcA$-periodic modules of period $1$ are precisely the $\mcA$-periodic modules from \cite{BCE}. 

\item Every $m$-periodic $(\mcA,\mcB)$-Gorenstein projective object is clearly $\mcA$-periodic of period $m$. The converse is not true in general.
\end{enumerate}
\end{remark}

The following relation between $\mcA$-periodic objects of period $m$ and $m$-periodic $(\mcA,\mcB)$-Gorenstein projective objects is a consequence of \cite[Prop. 3.14]{BMS}, Corollary~\ref{cor:Li^perp=M^perp} and Remark~\ref{rem:sGP}-(3).

\begin{proposition}\label{prop:characterizationGPvsWGP}
Let $\mcA,\mcB \subseteq \mcC$ such that $\pd_{\mcB}(\mcA) = 0$. Then,
\[
\mathcal{GP}_{(\mcA,\mcB)} \cap \pi_m(\mcA) = \mathcal{WGP}_{(\mcA, \mcB)}\cap \pi_{m}(\mcA) = \pi\mathcal{WGP}_{(\mcA, \mcB, m)} = \pi\mathcal{GP}_{(\mcA, \mcB, m)}.
\]
\end{proposition}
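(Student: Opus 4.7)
The plan is to verify the chain of equalities from right to left, using the three tools indicated by the hint: Proposition \ref{pro:WSGP=SGP}, Corollary \ref{cor:Li^perp=M^perp}, and \cite[Prop. 3.14]{BMS}. The rightmost equality $\pi\mathcal{WGP}_{(\mcA,\mcB,m)} = \pi\mathcal{GP}_{(\mcA,\mcB,m)}$ is immediate from Proposition \ref{pro:WSGP=SGP} under the standing hypothesis $\pd_{\mcB}(\mcA) = 0$.

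For the middle equality $\mathcal{WGP}_{(\mcA,\mcB)} \cap \pi_m(\mcA) = \pi\mathcal{WGP}_{(\mcA,\mcB,m)}$, I would argue both containments. For $\supseteq$, pick $M \in \pi\mathcal{WGP}_{(\mcA,\mcB,m)}$ together with an $\mcA$-loop $A_\bullet$ at $M$ of length $m$ having all cycles in ${}^{\perp}\mcB$. By definition $M \in \pi_m(\mcA)$, and in particular $M = Z_{0}(A_\bullet) \in {}^{\perp}\mcB$. Reading off the positively indexed half of $A_\bullet$ produces an exact sequence $M \rightarrowtail A_m \to \cdots \to A_1 \to A_m \to \cdots$ of objects of $\mcA$ whose cycles are precisely the $Z_k(A_\bullet)$'s and hence lie in ${}^{\perp}\mcB$; this places $M$ in $\mathcal{WGP}_{(\mcA,\mcB)}$. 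For $\subseteq$, if $M \in \mathcal{WGP}_{(\mcA,\mcB)} \cap \pi_m(\mcA)$ then $M \in {}^{\perp}\mcB$ and there is an $\mcA$-loop $A_\bullet$ at $M$ of length $m$; applying Corollary \ref{cor:Li^perp=M^perp} with $\mcY = \mcB$ (which is legitimate because $\pd_{\mcB}(\mcA)=0$) gives $Z_k(A_\bullet) \in {}^{\perp}\mcB$ for every $1 \leq k \leq m$, so $M \in \pi\mathcal{WGP}_{(\mcA,\mcB,m)}$.

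For the leftmost equality $\mathcal{GP}_{(\mcA,\mcB)} \cap \pi_m(\mcA) = \mathcal{WGP}_{(\mcA,\mcB)} \cap \pi_m(\mcA)$, the inclusion $\subseteq$ is Remark \ref{rem:sGP}-(3) applied at the non-periodic level (every $\mathcal{GP}_{(\mcA,\mcB)}$ object is weak $(\mcA,\mcB)$-Gorenstein projective, since the defining $\Hom(-,\mcB)$-acyclic $\mcA$-resolution has cycles in ${}^{\perp}\mcB$), and the reverse inclusion is exactly the content of \cite[Prop. 3.14]{BMS}, which identifies the two relative Gorenstein classes under the hereditary hypothesis $\pd_{\mcB}(\mcA) = 0$. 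Intersecting both sides with $\pi_m(\mcA)$ yields the claim.

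The only step that requires a small argument is the unwinding in the $\supseteq$ direction of the middle equality, where one must stitch the length-$m$ loop into a genuine one-sided coresolution whose cycles remain in ${}^{\perp}\mcB$; this is transparent because the periodicity reproduces the same finite list of cycles indefinitely. Beyond that, the proposition is essentially a bookkeeping statement: Corollary \ref{cor:Li^perp=M^perp} transfers the condition $M \in {}^{\perp}\mcB$ to every cycle of any $\mcA$-loop at $M$, which is what forces the three possible flavors of definition (weak $(\mcA,\mcB)$-Gorenstein, $(\mcA,\mcB)$-Gorenstein, and $m$-periodic variants) to collapse once we intersect with $\pi_m(\mcA)$.
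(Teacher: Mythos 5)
Your chain of equalities and the use of Proposition~\ref{pro:WSGP=SGP} and Corollary~\ref{cor:Li^perp=M^perp} for the middle and rightmost equalities are correct and line up with the paper's approach. The issue is the leftmost equality, where your attributions are inverted and one of the cited facts is false.

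Remark~\ref{rem:sGP}-(3) establishes $\pi\mathcal{WGP}_{(\mcA,\mcB,m)} \subseteq \pi\mathcal{GP}_{(\mcA,\mcB,m)}$ — that is, cycles in ${}^\perp\mcB$ force $\Hom(-,\mcB)$-acyclicity. The containment $\mathcal{GP}_{(\mcA,\mcB)} \subseteq \mathcal{WGP}_{(\mcA,\mcB)}$ goes the other way (``$\Hom(-,\mcB)$-acyclic forces cycles in ${}^\perp\mcB$''), needs $\pd_\mcB(\mcA)=0$, and is what \cite[Prop.\ 3.14]{BMS} actually gives; citing Remark~\ref{rem:sGP}-(3) for it is a misattribution, though your parenthetical reasoning is otherwise sound under the standing hypothesis.

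More seriously, for the reverse inclusion you claim that \cite[Prop.\ 3.14]{BMS} ``identifies the two relative Gorenstein classes'' and thus gives $\mathcal{WGP}_{(\mcA,\mcB)} \subseteq \mathcal{GP}_{(\mcA,\mcB)}$. That is false: the paper explicitly notes (in the figure caption following this very proposition) that the containment $\mathcal{GP}_{(\mcA,\mcB)} \subseteq \mathcal{WGP}_{(\mcA,\mcB)}$ from \cite[Prop.\ 3.14]{BMS} may be strict, e.g.\ for $\mcA=\mcB$ the projective objects of an abelian category without enough projectives. A weak Gorenstein projective object only comes with a one-sided $\mcA$-coresolution; nothing in that definition produces a backward resolution. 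The inclusion $\mathcal{WGP}_{(\mcA,\mcB)} \cap \pi_m(\mcA) \subseteq \mathcal{GP}_{(\mcA,\mcB)} \cap \pi_m(\mcA)$ must instead be deduced from what you have already proved: you have $\mathcal{WGP}_{(\mcA,\mcB)} \cap \pi_m(\mcA) = \pi\mathcal{WGP}_{(\mcA,\mcB,m)} = \pi\mathcal{GP}_{(\mcA,\mcB,m)}$, and every $\Hom(-,\mcB)$-acyclic $\mcA$-loop is in particular an exact $\Hom(-,\mcB)$-acyclic complex in $\Ch(\mcA)$, so $\pi\mathcal{GP}_{(\mcA,\mcB,m)} \subseteq \mathcal{GP}_{(\mcA,\mcB)} \cap \pi_m(\mcA)$. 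Equivalently and more directly: given $M \in \mathcal{WGP}_{(\mcA,\mcB)} \cap \pi_m(\mcA)$, take the $\mcA$-loop supplied by $\pi_m(\mcA)$; Corollary~\ref{cor:Li^perp=M^perp} places all its cycles in ${}^\perp\mcB$ because $M \in {}^\perp\mcB$, and Remark~\ref{rem:sGP}-(3) then makes the loop $\Hom(-,\mcB)$-acyclic, exhibiting $M$ directly as a cycle of a $\Hom(-,\mcB)$-acyclic exact complex in $\Ch(\mcA)$. With this replacement your argument is complete and follows the paper's intended route.
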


In \cite[Thm. 2.8]{BM} Bennis and Mahdou, and independently Zhao and Huang in \cite[Thm. 3.9]{ZH}, studied relations between $m$-strongly Gorenstein projective modules and Gorenstein projective modules. Below in Theorem \ref{teo:equivmSGP} we generalize their results to $m$-periodic $(\mcA,\mcB)$-Gorenstein projective objects in terms of the resolution class $\mcA$ and the testing class $\mcB$.

\begin{figure}[H]
\centering
\includegraphics[width=0.55\textwidth]{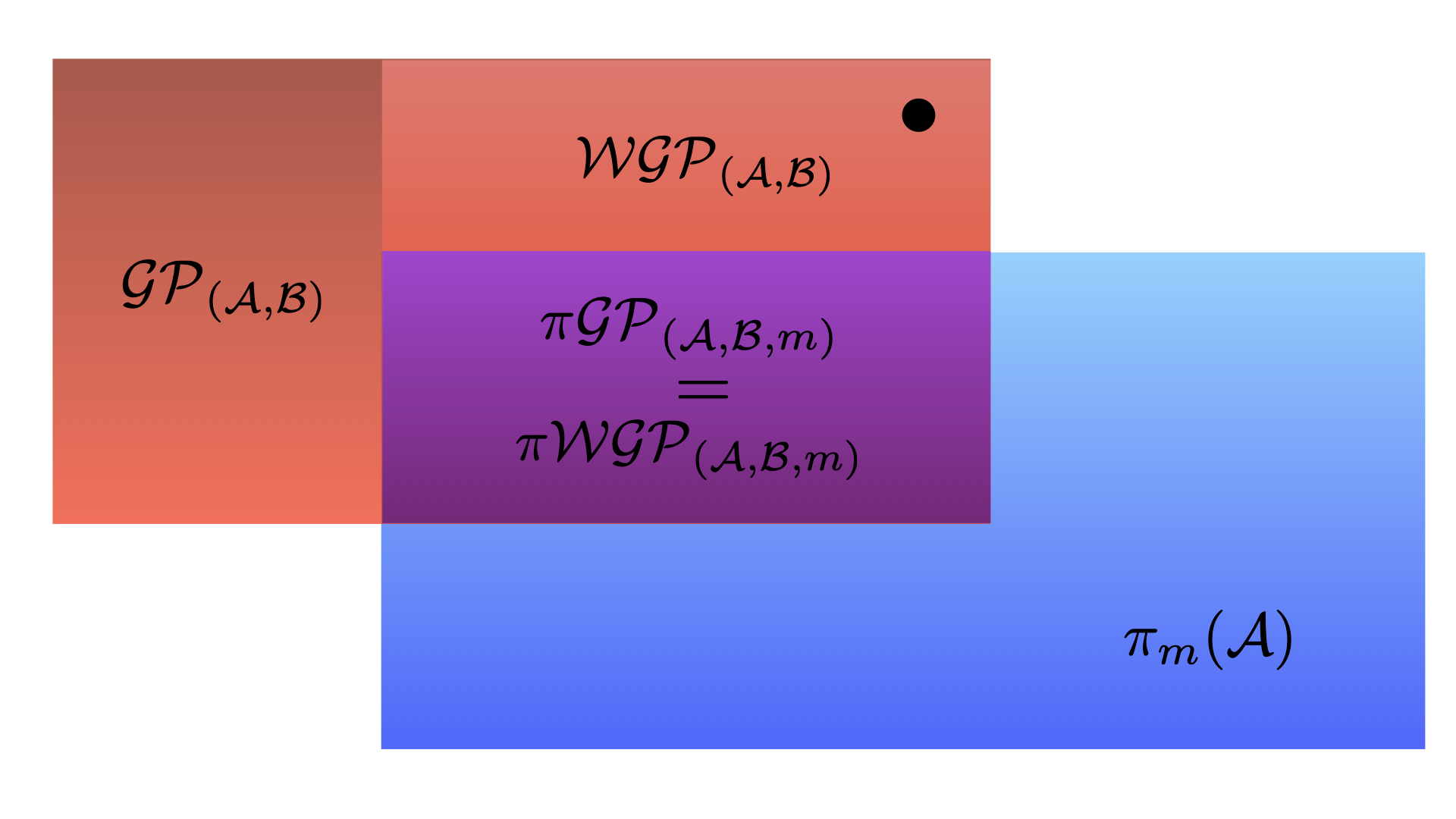}
\caption{A graphic depiction of the previous result. The dot represents the existence of an $(\mcA,\mcB)$-Gorenstein projective object which is not $m$-periodic for a certain $m$. See Example \ref{ex:sGP} for $m = 2$ and the simple projective object $P(3)$. The containment $\mathcal{GP}_{(\mcA,\mcB)} \subseteq \mathcal{WGP}_{(\mcA,\mcB)}$ from \cite[Prop. 3.14]{BMS} is also represented in the figure, and may be strict, as for instance in the case where $\mcA = \mcB = \text{projective objects}$ in an abelian category with not enough projective objects.}
\end{figure}
The following is the relative version of the well known fact that every Gorenstein projective module has either infinite or null projective dimension.

\begin{lemma}\label{lem:special_lemma}
Let $\mcA,\mcB \subseteq \mcC$ such that $\pd_{\mcB}(\mcA) = 0$, $\mcA$ is closed under extensions and direct summands, and $\omega:=\mcA\cap\mcB$ is a relative cogenerator in $\mcA$. Then, $\mathcal{WGP}_{(\mcA, \mcB)}$ is closed under extensions and $\mathcal{WGP}_{(\mcA,\mcB)} \cap \mcA^\wedge = \mcA$.
\end{lemma}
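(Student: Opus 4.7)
The plan is to address the two assertions separately, starting with the identification $\mathcal{WGP}_{(\mcA,\mcB)} \cap \mcA^\wedge = \mcA$. The containment $\supseteq$ follows from the trivial coresolution $A \rightarrowtail A \to 0 \to \cdots$ of any $A \in \mcA$, which has cycles in ${}^{\perp}\mcB$ because $\pd_{\mcB}(\mcA) = 0$ gives $\mcA \subseteq {}^{\perp}\mcB$. For the reverse containment I will induct on $n := \resdim_{\mcA}(M)$, the case $n = 0$ being trivial. For $n \geq 1$, pick a short exact sequence $K \rightarrowtail A_0 \twoheadrightarrow M$ with $A_0 \in \mcA$ and $\resdim_{\mcA}(K) \leq n - 1$, and splice it with the $\mcA$-coresolution $M \rightarrowtail X_0 \to X_1 \to \cdots$ coming from the WGP-datum of $M$: this produces an $\mcA$-coresolution of $K$ whose cycles $M, Z_1, Z_2, \ldots$ all lie in ${}^{\perp}\mcB$, and a dimension shift along $K \rightarrowtail A_0 \twoheadrightarrow M$ (using $\mcA \subseteq {}^{\perp}\mcB$ and $M \in {}^{\perp}\mcB$) also places $K$ in ${}^{\perp}\mcB$. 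Thus $K \in \mathcal{WGP}_{(\mcA,\mcB)}$, and the inductive hypothesis gives $K \in \mcA$.

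Once $K \in \mcA$, I finish using the relative cogenerator $\omega$: take $K \rightarrowtail W \twoheadrightarrow A'$ with $W \in \omega$ and $A' \in \mcA$, and form the pushout $P$ of $A_0 \leftarrow K \rightarrowtail W$. The pushout fits into two short exact sequences, $A_0 \rightarrowtail P \twoheadrightarrow A'$ and $W \rightarrowtail P \twoheadrightarrow M$. The first forces $P \in \mcA$ because $\mcA$ is closed under extensions, while the second splits since $W \in \omega \subseteq \mcB$ and $M \in {}^{\perp}\mcB$ annihilate $\Ext^1(M, W)$. Hence $P \simeq W \oplus M$ with $P \in \mcA$, and closure of $\mcA$ under direct summands gives $M \in \mcA$.

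For the closure of $\mathcal{WGP}_{(\mcA,\mcB)}$ under extensions, consider $M' \rightarrowtail M \twoheadrightarrow M''$ with $M', M'' \in \mathcal{WGP}_{(\mcA,\mcB)}$; then $M \in {}^{\perp}\mcB$ is immediate. To build an $\mcA$-coresolution of $M$ with cycles in ${}^{\perp}\mcB$, I will run a horseshoe-type iteration in which the first term of each coresolution is chosen in $\omega$. At stage zero, composing the first step of each WGP-coresolution with the $\omega$-cogenerator yields $M' \rightarrowtail W'_0 \twoheadrightarrow N'_1$ and $M'' \rightarrowtail W''_0 \twoheadrightarrow N''_1$ with $W'_0, W''_0 \in \omega$ and $N'_1, N''_1 \in {}^{\perp}\mcB$ (the latter via the filtration $Z'_1 \rightarrowtail N'_1 \twoheadrightarrow A'_0$ together with $\mcA \subseteq {}^{\perp}\mcB$). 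Since $W'_0 \in \omega \subseteq \mcB$ and $M'' \in {}^{\perp}\mcB$ give $\Ext^1(M'', W'_0) = 0$, the horseshoe delivers $M \rightarrowtail W'_0 \oplus W''_0 \twoheadrightarrow N_1$ with middle term in $\mcA$ and $N_1 \in {}^{\perp}\mcB$. The main obstacle is to iterate this construction: each successive cokernel $N'_k, N''_k$ must again admit an $\omega$-embedding with ${}^{\perp}\mcB$-cokernel, which I plan to arrange by tracking its structure as an extension of the WGP-cycle $Z'_k$ by an object of $\mcA$ and combining the $\omega$-cogenerator on each piece with the vanishing $\Ext^1(\mcA, \omega) = 0$ to extend maps across the filtration.
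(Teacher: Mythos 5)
Your argument for the equality $\mathcal{WGP}_{(\mcA,\mcB)} \cap \mcA^\wedge = \mcA$ is correct and takes a genuinely different, self-contained route from the paper's. The paper appeals to \cite[Thm.~2.8]{BMS} to produce $K\rightarrowtail A\twoheadrightarrow M$ with $K\in\omega^\wedge$, and then to \cite[Coroll.~3.15 (b)]{BMS} for $\Ext^{\geq 1}(M,\omega^\wedge)=0$, to split it. You instead induct on $\resdim_\mcA(M)$: a dimension shift along $K\rightarrowtail A_0\twoheadrightarrow M$ puts the first syzygy $K$ in $\mathcal{WGP}_{(\mcA,\mcB)}$, the inductive hypothesis gives $K\in\mcA$, and pushing out a cogenerating sequence $K\rightarrowtail W\twoheadrightarrow A'$ along $K\rightarrowtail A_0$ produces $W\rightarrowtail P\twoheadrightarrow M$ with $P\in\mcA$, which splits since $M\in{}^\perp\mcB$ and $W\in\omega\subseteq\mcB$. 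This avoids invoking the two cited results of \cite{BMS}.

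For closure under extensions, however, what you wrote is an outline rather than a proof: the paper delegates to \cite[Thm.~3.30]{BMS}, while you sketch a horseshoe iteration and explicitly defer the key step (``I plan to arrange\ldots''). Two points must actually be carried out. First, applying the $\omega$-cogenerator to the two pieces of the filtration of $N'_k$ embeds $N'_k$ into a finite direct sum of $\omega$-objects, not into an object of $\omega$; the hypotheses do not put $\omega=\mcA\cap\mcB$ closed under finite coproducts, a subtlety the paper's own footnote to this lemma highlights. The outer horseshoe can still be applied because $\Ext^1(N''_k, W_1\oplus W_2)\cong\Ext^1(N''_k,W_1)\oplus\Ext^1(N''_k,W_2)=0$ with $W_i\in\mcB$ and $N''_k\in{}^\perp\mcB$ (so one only needs the middle term in $\mcA$), or one can apply the cogenerator once more to $W_1\oplus W_2\in\mcA$ to land in $\omega$ --- but one of these must be said. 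Second, you must verify that the next cokernel $N'_{k+1}$ retains the invariant ``extension of an $\mcA$-object by a WGP-cycle'': the horseshoe interleaves the filtrations, and only after collapsing the intermediate $\mcA$-subquotients (using closure of $\mcA$ under extensions) does the invariant persist, which is what keeps the $\Ext^1$-vanishings available at the next stage. These gaps are fixable along the lines you indicate, but as written the closure-under-extensions part is not yet a demonstration.
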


\begin{proof} 
The fact that $\mathcal{WGP}_{(\mcA, \mcB)}$ is closed under extensions is proved in  \cite[Thm. 3.30]{BMS}.\footnote{In this reference, it is assumed that $\mcA \cap \mcB$ is closed under finite coproducts. After a careful revision of the arguments there, one can note that this assumption is not necessary.} Regarding the equality $\mathcal{WGP}_{(\mcA,\mcB)} \cap \mcA^\wedge = \mcA$, the containment ($\supseteq$) is clear, since $\mcA$ is closed under extensions and direct summands (and so, under finite direct sums). Now let $M \in \mathcal{WGP}_{(\mcA,\mcB)} \cap \mcA^\wedge$. By \cite[Thm. 2.8]{BMS}, there exists a short exact sequence $K \rightarrowtail A \twoheadrightarrow M$ with $A \in \mcA$ and $K \in \omega^\wedge$. Moreover, by \cite[Coroll. 3.15 (b)]{BMS}, $\Ext^{\geq 1}(M,\omega^\wedge) = 0$, and so the previous sequence splits. It follows that $M \in \mcA$, since $\mcA$ is closed under direct summands.\footnote{The equality $\mathcal{WGP}_{(\mcA,\mcB)} \cap \mcA^\wedge = \mcA$ was originally proven in \cite[Coroll. 4.15 (c2)]{BMS} with the additional assumption that $\Ext^{\geq 1}(\mathcal{GP}_{(\mcA,\mcB)},\mcA) = 0$. As one can notice in our proof, this condition is not needed.}
\end{proof}

\begin{theorem}\label{teo:equivmSGP}
Let $\mcA,\mcB \subseteq \mcC$ such that $\pd_{\mcB}(\mcA) = 0$, and $\omega:=\mcA\cap \mcB$. Consider the following statements for any $M\in \mcC$:
\begin{enumerate}[(a)]
\item $M\in \pi\mathcal{GP}_{(\mathcal{A, B},m)}$.
\item There exists an $\mcA$-loop $A_\bullet$ at $M$ of length $m$ which is $\Hom(-,\mcB^{\wedge})$-acyclic.

\item There exists an $\mcA$-loop $A_\bullet$ at $M$ of length $m$, and an integer $i\geq 0$ such that  $M \in \bigcap^{m}_{k = 1} {}^{\perp_{k+i}}\mcB$ for any $B\in \mcB$ (equivalently, $B\in \mcB^{\wedge}$).

\item There exists an $\mcA$-loop (equivalently, an $\mcA^\wedge$-loop) $A_\bullet$ at $M$ of length $m$ such that $\bigoplus^{m}_{k = 1} Z_k(A_\bullet) \in \pi\mathcal{GP}_{({\rm free}(\mcA),\mcB,1)}$.

\item There exists an $\mcA$-loop (equivalently, an $\mcA^\wedge$-loop) $A_\bullet$ at $M$ of length $m$ such that $\bigoplus^{m}_{k = 1} Z_k(A_\bullet) \in \mathcal{WGP}_{({\rm free}(\mcA),\mcB)}$.
\end{enumerate}
Then, (a), (b) and (c) are equivalent, and the implications (a) $\Rightarrow$ (d) $\Rightarrow$ (e) hold true. Moreover, if $\mcA$ is closed under extensions and direct summands, and $\omega$ is a relative cogenerator in $\mcA$, then all conditions are equivalent. \\
\end{theorem}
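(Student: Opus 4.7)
The plan is to establish the equivalences (a) $\Leftrightarrow$ (b) $\Leftrightarrow$ (c) and the chain (a) $\Rightarrow$ (d) $\Rightarrow$ (e) in general, and then close the loop (e) $\Rightarrow$ (a) under the additional hypotheses. First, (a) $\Leftrightarrow$ (b) $\Leftrightarrow$ (c) will follow by combining Proposition \ref{pro:WSGP=SGP} (which identifies $\pi\mathcal{GP}_{(\mcA,\mcB,m)}$ with $\pi\mathcal{WGP}_{(\mcA,\mcB,m)}$) with Corollary \ref{cor:Li^perp=M^perp}. Indeed, (a) becomes the existence of an $\mcA$-loop at $M$ whose cycles lie in ${}^{\perp}\mcB$; taking $\mcY = \mcB^\wedge$ in Corollary \ref{cor:Li^perp=M^perp} equivalently forces those cycles into ${}^{\perp}\mcB^\wedge$, which unfolds as the $\Hom(-,\mcB^\wedge)$-acyclicity of the loop and yields (b), while item (f) of the same corollary directly gives (c).

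For (a) $\Rightarrow$ (d), the key device is a telescoping direct sum. Given a $\Hom(-,\mcB)$-acyclic $\mcA$-loop $A_\bullet$ at $M$ of length $m$, I set $N := \bigoplus_{k=1}^{m} Z_k(A_\bullet)$, and observe that $N \cong \bigoplus_{k=0}^{m-1} Z_k(A_\bullet)$ since $Z_0(A_\bullet) = Z_m(A_\bullet) = M$. Taking the coproduct of the short exact sequences $Z_k(A_\bullet) \rightarrowtail A_k \twoheadrightarrow Z_{k-1}(A_\bullet)$ for $k = 1, \dots, m$ yields a $\Hom(-,\mcB)$-acyclic short exact sequence $N \rightarrowtail \bigoplus_{k=1}^{m} A_k \twoheadrightarrow N$ with middle term in ${\rm free}(\mcA)$; splicing copies of it produces a ${\rm free}(\mcA)$-loop at $N$ of length $1$, witnessing $N \in \pi\mathcal{GP}_{({\rm free}(\mcA),\mcB,1)}$.

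For (d) $\Rightarrow$ (e), I note that $\pd_{\mcB}({\rm free}(\mcA)) = 0$ (since $\Ext$ converts direct sums in the first variable to products), so Proposition \ref{pro:WSGP=SGP} applied to the pair $({\rm free}(\mcA),\mcB)$ gives $\pi\mathcal{GP}_{({\rm free}(\mcA),\mcB,1)} = \pi\mathcal{WGP}_{({\rm free}(\mcA),\mcB,1)}$, and Proposition \ref{prop:characterizationGPvsWGP} delivers the inclusion into $\mathcal{WGP}_{({\rm free}(\mcA),\mcB)}$.

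Finally, for (e) $\Rightarrow$ (a) under the additional hypotheses: closure of $\mcA$ under extensions yields closure under finite direct sums (via $A \rightarrowtail A \oplus A' \twoheadrightarrow A'$), so ${\rm free}(\mcA) = \mcA$, and hence $\mathcal{WGP}_{({\rm free}(\mcA),\mcB)} = \mathcal{WGP}_{(\mcA,\mcB)} \subseteq {}^{\perp}\mcB$. Since $M$ is a direct summand of $N$ (because $Z_0(A_\bullet) = M$), this forces $M \in {}^{\perp}\mcB$, and Corollary \ref{cor:Li^perp=M^perp} then places every cycle of the $\mcA$-loop in ${}^{\perp}\mcB$, so Proposition \ref{pro:WSGP=SGP} supplies (a). The main obstacle is the parenthetical ``equivalently, an $\mcA^\wedge$-loop'' in (d) and (e): to upgrade an $\mcA^\wedge$-loop back to an $\mcA$-loop, I will argue that $M$ (as a summand of $N$) lies in $\mathcal{WGP}_{(\mcA,\mcB)} \cap \mcA^\wedge = \mcA$ via Lemma \ref{lem:special_lemma}, whence Remark \ref{rem:sGP}-(4) supplies an $\mcA$-loop of length $m$ at $M$. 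The delicate point is verifying $M \in \mcA^\wedge$ from the $\mcA^\wedge$-loop structure, which I expect will require a dimension-shifting argument along the loop together with closure of $\mcA^\wedge$ under extensions (and, if needed, closure of $\mathcal{WGP}_{(\mcA,\mcB)}$ under direct summands).
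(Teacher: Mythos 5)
Your (a) $\Leftrightarrow$ (b) $\Leftrightarrow$ (c), (a) $\Rightarrow$ (d) (the telescope $N \rightarrowtail \bigoplus A_k \twoheadrightarrow N$), and (d) $\Rightarrow$ (e) all coincide with the paper's arguments. The gap you flag in (e) $\Rightarrow$ (a) is real, and the route you sketch to close it will not readily work: to apply $\mathcal{WGP}_{(\mcA,\mcB)} \cap \mcA^\wedge = \mcA$ to $M$ itself you would need both that $\mathcal{WGP}_{(\mcA,\mcB)}$ is closed under direct summands (Lemma \ref{lem:special_lemma} does not assert this) and that $M \in \mcA^\wedge$ (which, as you note, is not at all clear from an $\mcA^\wedge$-loop).

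The paper instead argues about the middle term, not about $M$. From an $\mcA^\wedge$-loop $A_\bullet$ with $N := \bigoplus_{k=1}^m Z_k(A_\bullet) \in \mathcal{WGP}_{(\mcA,\mcB)}$, take the short exact sequence
\[
N \rightarrowtail \bigoplus_{k=1}^m A_k \twoheadrightarrow N.
\]
Closure of $\mathcal{WGP}_{(\mcA,\mcB)}$ under extensions puts $\bigoplus A_k$ in $\mathcal{WGP}_{(\mcA,\mcB)}$; and $\bigoplus A_k$ is trivially in $\mcA^\wedge$ (it is a finite sum of objects of $\mcA^\wedge$). Lemma \ref{lem:special_lemma} then gives $\bigoplus A_k \in \mathcal{WGP}_{(\mcA,\mcB)} \cap \mcA^\wedge = \mcA$, and closure of $\mcA$ under direct summands forces each $A_k \in \mcA$ — so the putative $\mcA^\wedge$-loop was already an $\mcA$-loop. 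Each cycle $Z_k(A_\bullet)$, as a summand of $N \in {}^\perp\mcB$, lies in ${}^\perp\mcB$, so the loop is $\Hom(-,\mcB)$-acyclic and $M \in \pi\mathcal{GP}_{(\mcA,\mcB,m)}$. This cleanly bypasses both obstacles that your proposed route would have to overcome.
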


\begin{proof} ~\
\begin{itemize}
\item (a) $\Rightarrow$ (b): Let $M \in \pi\mathcal{GP}_{(\mathcal{A, B},m)}$. By Proposition~\ref{pro:WSGP=SGP}, we have that $M \in \pi\mathcal{WGP}_{(\mathcal{A, B},m)}$, and there is an $\mcA$-loop $A_\bullet$ at $M$ of length $m$ with cycles in ${}^\perp\mcB$. Corollary \ref{cor:Li^perp=M^perp} shows that $Z_k(A_\bullet) \in {}^\perp\mcB^\wedge$ for every $k \in \mathbb{Z}$. The latter implies that $A_\bullet$ is $\Hom(-,\mcB^{\wedge})$-acyclic.

\item (b) $\Rightarrow$ (c) $\Rightarrow$ (a) follow by Corollaries \ref{cor:Li^perp=M^perp} and \ref{characterization2}.

\item (a) $\Rightarrow$ (d) follows as in \cite[Thm. 3.9]{ZH}.

\item (d) $\Rightarrow$ (e) is a consequence of Proposition \ref{prop:characterizationGPvsWGP}. For, note that $({\rm free}(\mcA),\mcB)$ is a hereditary pair. 
\end{itemize}
For the rest of the proof, assume that $\mcA$ is closed under extensions and direct summands, and that $\omega$ is a relative cogenerator in $\mcA$. Under these conditions, we have that ${\rm free}(\mcA) = \mcA$, and so $\mathcal{WGP}_{({\rm free}(\mcA), \mcB)}=\mathcal{WGP}_{(\mcA, \mcB)}$. Now let $A_\bullet$ be an $\mcA^\wedge$-loop at $M$ of length $m$ such that $\bigoplus^{m}_{k = 1} Z_k(A_\bullet) \in \mathcal{WGP}_{(\mcA,\mcB)}$. By Lemma~\ref{lem:special_lemma}, we know that $\mathcal{WGP}_{(\mcA,\mcB)}$ is closed under extensions and $\mathcal{WGP}_{(\mcA, \mcB)}\cap \mcA^{\wedge} = \mcA$. Thus, by taking the exact sequence 
\[
\bigoplus^{m}_{k = 1} Z_k(A_\bullet) \rightarrowtail \bigoplus^{m}_{k = 1} A_{k} \twoheadrightarrow \bigoplus^{m}_{k = 1} Z_k(A_\bullet)
\]
we get that $\bigoplus^{m}_{k = 1} A_{k} \in \mathcal{WGP}_{(\mcA, \mcB)} \cap \mcA^{\wedge} = \mcA$, and then $A_{k}\in \mcA$ since $\mcA$ is closed under direct summands. On the other hand,  $A_\bullet$ is $\Hom(-,\mcB)$-acyclic since it has cycles in $\mathcal{WGP}_{(\mcA,\mcB)} \subseteq {}^{\perp}\mcB$ (see \cite[Prop. 3.14]{BMS}).
\end{proof}

Some useful properties of relative $(\mcA,\mcB)$-Gorenstein objects are obtained in the case where the pair $(\mathcal{A,B})$ is \emph{GP-admissible} (resp., \emph{GI-admissible}) \cite[Defs. 3.1 \& 3.6]{BMS}. This means that:
\begin{itemize}
\item $\mcA$ and $\mcB$ are closed under finite coproducts and $\pd_{\mcB}(\mcA) = 0$.

\item Every object in $\mcC$ is the epimorphic image of an object in $\mcA$ (resp., every object in $\mcC$ can be embedded into an object in $\mcB$). 

\item $\mcA$ (resp., $\mcB$) is closed under extensions. 

\item $\mcA \cap \mcB$ is a relative cogenerator in $\mcA$ (resp., a relative generator in $\mcB$).
\end{itemize}

As we will see in Proposition~\ref{pro: GP sumandos directos de mGP}, $(\mcA,\mcB)$-Gorenstein projective objects are direct summands of $m$-periodic $(\mcA,\mcB)$-Gorenstein projective objects, provided that $(\mcA,\mcB)$ is GP-admissible. However, the class $\pi\mathcal{GP}_{(\mcA,\mcB, m)}$ may not be closed under direct summands in general. Indeed, a counterexample is given in \cite[Ex. 3.10]{ZH}. Let us extend this observation to the relative case with the following example.

\begin{example}\label{ex: piGPm->piGP1}
Consider the pair $(\mathcal{P}(\mcX), \mathcal{P}(\mcX))$ from Example~\ref{ex:sGP}-(1), which is not GP-admissible. It is clear that $\mathcal{P}(\mcX)$ is closed under extensions, finite coproducts and that $\Ext^{\geq 1}(\mathcal{P}(\mcX),\mathcal{P}(\mcX)) = 0$. From Lemma~\ref{lem:special_lemma}, we know that $\mathcal{WGP}_{\mathcal{P}(\mcX)}$ is closed under extensions and $\mathcal{P}(\mcX) = (\mathcal{P}(\mcX))^{\wedge} \cap \mathcal{WGP}_{\mathcal{P}(\mcX)}$. Therefore, all the conditions in Theorem~\ref{teo:equivmSGP} are equivalent.

On the other hand, notice that there are short exact sequences 
\[
S(1) \rightarrowtail P(2) \twoheadrightarrow S(2)\quad \mbox{and}\quad S(2) \rightarrowtail P(1) \twoheadrightarrow S(1)
\] 
which are $\Hom(-,\mathcal{P}(\mcX))$-acyclic (recall that $\mcX$ is a Frobenius subcategory of $\modu(\Lambda)$). This yields an exact and $\Hom(-,\mathcal{P}(\mcX))$-acyclic sequence 
\[
S(1) \rightarrowtail P(2) \to P(1) \twoheadrightarrow S(1),
\] 
and so $S(1) \in \pi\mathcal{GP}_{(\mathcal{P}(\mcX),\mathcal{P}(\mcX), 2)}$. Moreover, $S(1)\oplus S(2)\in \pi\mathcal{GP}_{(\mathcal{P}(\mcX),\mathcal{P}(\mcX),1)}$ by Theorem \ref{teo:equivmSGP}. However, $S(1) \notin \pi\mathcal{GP}_{(\mathcal{P}(\mcX),\mathcal{P}(\mcX), 1)}$. Indeed, if $S(1) \in \pi\mathcal{GP}_{(\mathcal{P}(\mcX),\mathcal{P}(\mcX), 1)}$ then there is an exact sequence $ S(1)\rightarrowtail P\twoheadrightarrow S(1)$ with $P\in \mathcal{P}(\mcX)$ which splits by the Auslander-Reiten formula \cite[IV.2 Lem. 2.12]{ASS}. Hence, $S(1) \in \mathcal{P}(\mcX)$ which is a contradiction. 
\end{example}


\section{Orthogonality relations and cluster tilting categories}\label{sec: orth relation}

Another important result by Zhao and Huang \cite[Prop. 3.7]{ZH} asserts that an $n$-strongly Gorenstein projective module $M$ is projective if, and only if, it is \emph{self-orthogonal}, that is, $\Ext^{\geq 1}(M,M) = 0$. We begin this section proving a relative version of this fact. To that end, we shall work with the following class of $m$-periodic $(\mcA, \mcB)$-Gorenstein projective objects.

\begin{definition}\label{def: piGPacyc}
An object $M \in \mcC$ is \textbf{proper $\bm{m}$-periodic $\bm{(\mcA,\mcB)}$-Gorenstein projective} if there exists an $\mcA$-loop at $M$ of length $m$ which is both $\Hom(\mcA, -)$-acyclic and $\Hom(-,\mcB)$-acyclic. \textbf{Proper $\bm{m}$-periodic $\bm{(\mcA,\mcB)}$-Gorenstein injective objects} are defined dually. The classes of proper $m$-periodic $(\mcA,\mcB)$-Gorenstein projective and injective objects in $\mcC$ will be denoted by $\pi\mathcal{GP}_{(\mcA,\mcB,m)}^{\rm ppr}$ and $\pi\mathcal{GI}_{(\mcA,\mcB,m)}^{\rm ppr}$, respectively.
\end{definition}

\begin{remark}\label{rem:acy} ~\
\begin{enumerate}
\item Clearly, the containment
\[
\pi\mathcal{GP}_{(\mcA,\mcB,m)}^{\rm ppr} \subseteq \pi\mathcal{GP}_{(\mcA,\mcB, m)}
\] 
is valid for any $m\geq 1$, and may by strict (see Example \ref{ex:sGP}-(2,3,4)). The equality holds for instance in the case where $\mcA \subseteq \mcP$. 

\item Proper $m$-periodic relative Gorenstein projective objects can be used to give a characterization of $(n+1)$-rigid subcategories. Indeed, note that if $\mcA$ is closed under finite coproducts, then $\mcA$ is $(n+1)$-rigid if, and only if, $\pi\mathcal{GP}^{\rm ppr}_{(\mcA,\mcB, 1)} \subseteq \bigcap^n_{i = 1} \mcA^{\perp_i}$, for any $\mcB \subseteq \mcC$.

\item If $M \in \pi\mathcal{GP}^{\rm ppr}_{(\mcA,\mcB,m)}$ and $A_\bullet$ is a $\Hom(\mcA,-)$-acyclic and $\Hom(-,\mcB)$-acyclic $\mcA$-loop at $M$ of length $m$, then $Z_k(A_\bullet) \in \pi\mathcal{GP}^{\rm ppr}_{(\mcA,\mcB,m)}$ for every $k \in \mathbb{Z}$.
\end{enumerate}
\end{remark}

Despite the fact that the class of (proper) relative $m$-periodic Gorenstein projective objects is not closed under direct summands in general, in some cases, this closure property holds for certain direct summands. Moreover, under specific circumstances we can give a characterization of when the class of (proper) relative $m$-periodic Gorenstein objects is closed under extensions. This is specified in the following result.

\begin{theorem}
Let $\mcA, \mcB$ be classes of objects in $\mcC$ such that $\Ext^1(\mathcal{A,B}) = 0$. Let $\omega := \mcA \cap \mcB$ be closed under finite coproducts and direct summands.
\begin{enumerate}
\item If $M = N \oplus W$ with $W \in \omega$, then $M \in \pi\mathcal{GP}^{\rm ppr}_{(\omega,\mcB,m)}$ if, and only if, $N \in \pi\mathcal{GP}^{\rm ppr}_{(\omega,\mcB,m)}$. 

\item The following assertions are equivalent for any short exact sequence
\begin{align}\label{eqn:sequence_closure}
X & \rightarrowtail Y \twoheadrightarrow Z.
\end{align}
\begin{enumerate}[(a)]
\item If $X, Z \in \pi\mathcal{GP}^{\rm ppr}_{(\omega,\mcB,m)}$, then $Y \in \pi\mathcal{GP}^{\rm ppr}_{(\omega,\mcB,m)}$. That is, $\pi\mathcal{GP}^{\rm ppr}_{(\omega,\mcB,m)}$ is closed under extensions.


\item If $Y, Z \in \pi\mathcal{GP}^{\rm ppr}_{(\omega,\mcB,m)}$, then $X \in \pi\mathcal{GP}^{\rm ppr}_{(\omega,\mcB,m)}$ if, and only if, $X \in \omega^{\perp_1}$. That is, $\pi\mathcal{GP}^{\rm ppr}_{(\omega,\mcB,m)}$ is closed under epikernels in $\omega^{\perp_1}$.

\item If $X, Y \in \pi\mathcal{GP}^{\rm ppr}_{(\omega,\mcB,m)}$, then $Z \in \pi\mathcal{GP}^{\rm ppr}_{(\omega,\mcB,m)}$ if, and only if, $Z \in {}^{\perp_1}\mcB$. That is, $\pi\mathcal{GP}^{\rm ppr}_{(\omega,\mcB,m)}$ is closed under monocokernels in ${}^{\perp_1}\mcB$. 
\end{enumerate}
\end{enumerate}
\end{theorem}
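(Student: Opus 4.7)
The plan unifies parts~(1) and~(2) through three recurring tools. First, for any $W \in \omega$ one can build an exact loop $W_\bullet$ at $W$ of length $m$ whose components are direct sums of copies of $W$; by closure of $\omega$ under finite coproducts they lie in $\omega$, and by contractibility the complex is $\Hom(\omega,-)$- and $\Hom(-,\mcB)$-acyclic. Second, applying the long exact sequence of $\Ext$ to each short exact sequence $Z_{i+1}(A_\bullet) \rightarrowtail A_{i+1} \twoheadrightarrow Z_i(A_\bullet)$ coming from a proper loop, and combining the two acyclicity conditions with $\Ext^1(\mcA,\mcB)=0$, forces $Z_i(A_\bullet) \in \omega^{\perp_1} \cap {}^{\perp_1}\mcB$ for every $i$; in particular, every object in $\pi\mathcal{GP}^{\rm ppr}_{(\omega,\mcB,m)}$ and each of its direct summands lies in $\omega^{\perp_1} \cap {}^{\perp_1}\mcB$. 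Third, any short exact sequence with two terms in $\omega$ splits whenever the relevant $\Ext^1$ vanishes, and closure of $\omega$ under direct summands then places the third term in $\omega$.

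For part~(1), the $(\Leftarrow)$ direction takes $A_\bullet \oplus W_\bullet$ as the required loop at $M = N \oplus W$. For $(\Rightarrow)$, I would push out $M \rightarrowtail A_m$ along the split epimorphism $M \twoheadrightarrow N$ to produce $B_m$ fitting into both $N \rightarrowtail B_m \twoheadrightarrow Z_{m-1}(A_\bullet)$ and $W \rightarrowtail A_m \twoheadrightarrow B_m$, and dually pull back $A_1 \twoheadrightarrow M$ along $N \hookrightarrow M$ to produce $B_1$ fitting into $Z_1(A_\bullet) \rightarrowtail B_1 \twoheadrightarrow N$ and $B_1 \rightarrowtail A_1 \twoheadrightarrow W$. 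Tool two applied to the cycles and to $N$ (a direct summand of $M$) gives $\Ext^1(B_m,W) = 0 = \Ext^1(W,B_1)$; tool three then yields $A_m \cong W \oplus B_m$ and $A_1 \cong B_1 \oplus W$ with $B_m, B_1 \in \omega$. Splicing produces the exact sequence $N \rightarrowtail B_m \to A_{m-1} \to \cdots \to A_2 \to B_1 \twoheadrightarrow N$ in $\omega$, and the decomposition $A_\bullet \cong B_\bullet \oplus W_\bullet'$, with $W_\bullet'$ a trivial $W$-loop supported in positions $1$ and $m$ of each period, transfers both acyclicity conditions from $A_\bullet$ to $B_\bullet$.

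For part~(2)(a), a Horseshoe-type construction combines loops $X_\bullet$ and $Z_\bullet$ into a loop $Y_\bullet$ at $Y$ of length $m$ with $Y_i \cong X_i \oplus Z_i \in \omega$ (using $\Ext^1(Z_i,X_i) = 0$); both acyclicity conditions follow by applying the $\Ext$ LES to the resulting short exact sequence of complexes $X_\bullet \rightarrowtail Y_\bullet \twoheadrightarrow Z_\bullet$ and using the vanishing of $H_\ast(\Hom(\omega,X_\bullet))$, $H_\ast(\Hom(\omega,Z_\bullet))$, and the dual on the right. For (2)(b) and (c), the necessity of $X \in \omega^{\perp_1}$ (resp.\ $Z \in {}^{\perp_1}\mcB$) is immediate from tool two. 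For sufficiency in (2)(b), given loops $Y_\bullet$ and $Z_\bullet$, I would lift $Y \twoheadrightarrow Z$ to a chain map $\psi \colon Y_\bullet \to Z_\bullet$ inductively using the $\Hom(\omega,-)$-acyclicity of $Z_\bullet$, and build a loop $X_\bullet$ at $X$ by degreewise pullback/pushout modifications of $Y_\bullet$ along the SES $X \rightarrowtail Y \twoheadrightarrow Z$; the hypothesis $X \in \omega^{\perp_1}$ combined with the cycle orthogonality from tool two yields the degreewise splittings placing each $X_i \in \omega$ via tool three, and the acyclicity of $X_\bullet$ follows from those of $Y_\bullet$ and $Z_\bullet$ through the induced SES of complexes. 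Part~(2)(c) is the categorical dual. The main technical obstacle lies in (2)(b)/(c): unlike in (1)$(\Rightarrow)$ where the relevant sequence splits abstractly, the SES $X \rightarrowtail Y \twoheadrightarrow Z$ is unsplit, so the inductive compatibility of the chain-map lift $\psi$ with the degreewise pushout/pullback SES's requires more delicate control, with $X \in \omega^{\perp_1}$ (resp.\ $Z \in {}^{\perp_1}\mcB$) providing precisely the $\Ext$-vanishing needed at each step.
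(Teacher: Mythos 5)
Your treatment of part (1) is essentially the paper's: the ``if'' direction by adjoining a contractible $W$-loop, the ``only if'' direction by pushing out $M \rightarrowtail W_m$ along $M \twoheadrightarrow N$ and pulling back $W_1 \twoheadrightarrow M$ along $N \rightarrowtail M$, with the $\Ext$-vanishing on cycles forcing the relevant columns to split and leaving $W'_m, W'_1 \in \omega$. That portion is fine.

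Part (2), however, has a fundamental misreading of the statement. The theorem does \emph{not} assert that (a), (b), (c) each hold; it asserts only that they are \emph{equivalent}. You instead attempt to prove all three outright: a Horseshoe-type construction for (a), and direct chain-map-lifting constructions for the sufficiency halves of (b) and (c). The paper's own proof is entirely different in structure: it shows (a) $\Rightarrow$ (b) and (b) $\Rightarrow$ (a) (and similarly for (c)) by pullback/pushout diagram chases that \emph{invoke (a) or (b) as a hypothesis} at the crucial step and feed the result into part (1). No direct membership proof is given or needed. Indeed, the Remark immediately following the theorem states it is ``unlikely'' that $\pi\mathcal{GP}_{(\mcA,\mcB,m)}$ is closed under extensions, so (a) should not be expected to hold unconditionally, and a direct proof of it cannot exist under only the stated hypotheses.

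Even setting aside the misreading, the Horseshoe sketch for (a) has a concrete gap: the standard Horseshoe Lemma interleaves two resolutions (or coresolutions) that extend infinitely in one direction, so there is never a ``closure'' constraint. A loop of length $m$ must come back to $Y$ after exactly $m$ steps. Running the Horseshoe for $m$ steps from the degree-$0$ cycle lands on \emph{some} extension of $X$ by $Z$ in the middle term, but nothing in the construction forces that extension to be isomorphic to $Y$ itself. The same issue infects your proposed chain-map-lifting constructions for (2)(b) and (2)(c). The paper sidesteps all of this by never building a loop at $Y$ or $X$ from scratch: it instead produces an object $W'$ that is simultaneously an extension involving a known periodic object and a split direct sum with an $\omega$-summand, then applies part (1) to descend the periodicity to the desired summand.

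To repair the proposal you would need to discard the direct constructions in part (2) and replace them with the two diagram chases (a) $\Leftrightarrow$ (b), (a) $\Leftrightarrow$ (c), using part (1) together with the syzygy rotation available inside a proper loop (Remark preceding the theorem) to produce the auxiliary short exact sequence $Z' \rightarrowtail W \twoheadrightarrow Z$ or $X \rightarrowtail W \twoheadrightarrow X'$ with $W \in \omega$ that seeds the pullback/pushout.
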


\begin{proof} ~\
\begin{enumerate}
\item For the ``if part'', if $N \in \pi\mathcal{GP}^{\rm ppr}_{(\omega,\mcB,m)}$ then $N \simeq Z_{mk}(W_\bullet)$ for every $k \in \mathbb{Z}$, where $W_\bullet$ is a $\Hom(\omega,-)$-acyclic and $\Hom(-,\mcB)$-acyclic $\omega$-loop at $N$ of length $m$. On the other hand, let $\overline{W}$ denote the complex with $W$ at degrees $1$ and $0$, and $0$ otherwise, where the only nonzero differential map is the identity on $W$. Consider its $(m-1)k$ suspension complex $\overline{W}[(m-1)k]$\footnote{Recall that the {\bf $\bm{m}$-th suspension} of a complex $X_\bullet$ is defined as the complex $X_\bullet[m]$ such that $(X_{\bullet}[m])_i := X_{i-m}$ and $\partial^{X_\bullet[m]}_i := (-1)^mX_{i-m}$, for every $i \in \mathbb{Z}$.} Then, we have that $W \simeq Z_{mk}(\bigoplus_{k \in \mathbb{Z}} \overline{W}[(m-1)k])$, and so 
\begin{align*}
N \oplus W & \simeq Z_{mk}(W_\bullet) \oplus Z_{mk}\left(\bigoplus_{k \in \mathbb{Z}} \overline{W}[(m-1)k]\right) \\
& \simeq Z_{mk}\left(W_\bullet \oplus \bigoplus_{k \in \mathbb{Z}} \overline{W}[(m-1)k] \right),
\end{align*}
for every $k \in \mathbb{Z}$, where $W_\bullet \oplus \left[ \bigoplus_{k \in \mathbb{Z}} \overline{W}[(m-1)k] \right]$ is a $\Hom(\omega,-)$-acyclic and $\Hom(-,\mcB)$-acyclic $\omega$-loop at $N \oplus W$ of length $m$. Hence, $M = N \oplus W \in \pi\mathcal{GP}^{\rm ppr}_{(\omega,\mcB,m)}$.

Now for the ``only if'' part, suppose that $M \in \pi\mathcal{GP}^{\rm ppr}_{(\omega,\mcB,m)}$. Then, we can consider a $\Hom(\omega,-)$-acyclic and $\Hom(-,\mcB)$-acyclic $\omega$-loop at $M$ of length $m$, say $W_\bullet$. These acyclicity conditions and the fact that $\Ext^1(\mathcal{A,B}) = 0$ imply that $Z_k(W_\bullet) \in \omega^{\perp_1} \cap {}^{\perp_1}\mcB$ for every $k \in \mathbb{Z}$. In particular, we have $N \in \omega^{\perp_1} \cap {}^{\perp_1}\mcB$. The pushout of $M \rightarrowtail W_m$ and $M \twoheadrightarrow N$ yields the following solid diagram:
\[
\begin{tikzpicture}[description/.style={fill=white,inner sep=2pt}] 
\matrix (m) [matrix of math nodes, row sep=2.3em, column sep=2.3em, text height=1.25ex, text depth=0.25ex] 
{ 
W & M & N \\ 
W & W_m & W'_m \\
{} & Z_m(W_\bullet) & Z_m(W_\bullet) \\
}; 
\path[->] 
(m-1-2)-- node[pos=0.5] {\footnotesize$\mbox{\bf po}$} (m-2-3) 
;
\path[>->]
(m-1-1) edge (m-1-2) 
(m-1-2) edge (m-2-2)
(m-2-1) edge (m-2-2) 
(m-1-3) edge (m-2-3)
;
\path[->>]
(m-1-2) edge (m-1-3) 
(m-2-2) edge (m-2-3) edge (m-3-2)
(m-2-3) edge (m-3-3)
;
\path[-,font=\scriptsize]
(m-1-1) edge [double, thick, double distance=2pt] (m-2-1)
(m-3-2) edge [double, thick, double distance=2pt] (m-3-3)
;
\end{tikzpicture} .
\]
Note in the right-hand side column that $N, Z_m(W_\bullet) \in {}^{\perp_1}\mcB$ implies that $W'_m \in {}^{\perp_1}\mcB$. So the middle row is split exact. Thus, $W_m \simeq W'_m \oplus W$, which in turn implies that $W'_m \in \omega$ since $\omega$ is closed under direct summands. So the right-hand side column is a $\Hom(\omega,-)$-acyclic and $\Hom(-,\mcB)$-acyclic short exact sequence with $W'_m \in \omega$. On the other hand, taking the pullback of $W_1 \twoheadrightarrow M$ and $N \rightarrowtail M$ yields the following solid diagram:
\[
\begin{tikzpicture}[description/.style={fill=white,inner sep=2pt}] 
\matrix (m) [matrix of math nodes, row sep=2.3em, column sep=2.3em, text height=1.25ex, text depth=0.25ex] 
{ 
Z_1(W_\bullet) & W'_1 & N \\ 
Z_1(W_\bullet) & W_1 & M \\
{} & W & W \\
}; 
\path[->] 
(m-1-2)-- node[pos=0.5] {\footnotesize$\mbox{\bf pb}$} (m-2-3) 
;
\path[>->]
(m-1-1) edge (m-1-2) 
(m-1-2) edge (m-2-2)
(m-2-1) edge (m-2-2) 
(m-1-3) edge (m-2-3)
;
\path[->>]
(m-1-2) edge (m-1-3) 
(m-2-2) edge (m-2-3) edge (m-3-2)
(m-2-3) edge (m-3-3)
;
\path[-,font=\scriptsize]
(m-1-1) edge [double, thick, double distance=2pt] (m-2-1)
(m-3-2) edge [double, thick, double distance=2pt] (m-3-3)
;
\end{tikzpicture} .
\]
As in the previous diagram, we can note that the top row is a $\Hom(\omega,-)$-acyclic and $\Hom(-,\mcB)$-acyclic short exact sequence with $W'_1 \in \omega$. Hence, the sequence $N \rightarrowtail W'_m \to W_{m-1} \to \cdots \to W_2 \to W'_1 \twoheadrightarrow N$ gives rise to a $\Hom(\omega,-)$-acyclic and $\Hom(-,\mcB)$-acyclic $\omega$-loop at $N$ of length $m$, and so $N \in \pi\mathcal{GP}^{\rm ppr}_{(\omega,\mcB,m)}$. 

\item We only prove the implications (a) $\Rightarrow$ (b) $\Rightarrow$ (a), since (a) $\Rightarrow$ (c) $\Rightarrow$ (a) follows in a similar way. 
\begin{itemize}
\item (a) $\Rightarrow$ (b): The ``only if'' part is clear since $\pi\mathcal{GP}^{\rm ppr}_{(\omega,\mcB,m)} \subseteq \omega^{\perp_1}$. Now suppose we are given a short exact sequence as \eqref{eqn:sequence_closure} with $Y, Z \in \pi\mathcal{GP}^{\rm ppr}_{(\omega,\mcB,m)}$ and $X \in \omega^{\perp_1}$. By Remark \ref{rem:acy}-(3), we can take a short exact sequence $Z' \rightarrowtail W \twoheadrightarrow Z$ with $W \in \omega$ and $Z' \in \pi\mathcal{GP}^{\rm ppr}_{(\omega,\mcB,m)}$. The pullback of $Y \twoheadrightarrow Z$ and $W \twoheadrightarrow Z$ yields the following solid diagram:
\[
\begin{tikzpicture}[description/.style={fill=white,inner sep=2pt}] 
\matrix (m) [matrix of math nodes, row sep=2.3em, column sep=2.3em, text height=1.25ex, text depth=0.25ex] 
{ 
{} & Z' & Z' \\ X & W' & W \\ X & Y & Z \\
}; 
\path[->] 
(m-2-2)-- node[pos=0.5] {\footnotesize$\mbox{\bf pb}$} (m-3-3) 
;
\path[>->]
(m-1-2) edge (m-2-2) 
(m-1-3) edge (m-2-3)
(m-2-1) edge (m-2-2) 
(m-3-1) edge (m-3-2)
;
\path[->>]
(m-2-2) edge (m-2-3) edge (m-3-2) 
(m-3-2) edge (m-3-3) 
(m-2-3) edge (m-3-3)
;
\path[-,font=\scriptsize]
(m-1-2) edge [double, thick, double distance=2pt] (m-1-3)
(m-2-1) edge [double, thick, double distance=2pt] (m-3-1)
;
\end{tikzpicture} .
\]
Since $X \in \omega^{\perp_1}$, the central row splits, and so $W' \simeq W \oplus X$. On the other hand, $W' \in \pi\mathcal{GP}^{\rm ppr}_{(\omega,\mcB,m)}$ by using (a) in the central column. Hence, part (1) implies that $X \in \pi\mathcal{GP}^{\rm ppr}_{(\omega,\mcB,m)}$. 

\item (b) $\Rightarrow$ (a): Suppose that $X, Z \in \pi\mathcal{GP}^{\rm ppr}_{(\omega,\mcB,m)}$ in \eqref{eqn:sequence_closure}. Then, $Y \in \omega^{\perp_1}$. On the other hand, consider a short exact sequence $X \rightarrowtail W \twoheadrightarrow X'$ with $W \in \omega$ and $X' \in \pi\mathcal{GP}^{\rm ppr}_{(\omega,\mcB,m)}$. Taking the pushout of $X \rightarrowtail W$ and $X \rightarrowtail Y$ yields the following solid diagram:
\[
\begin{tikzpicture}[description/.style={fill=white,inner sep=2pt}] 
\matrix (m) [matrix of math nodes, row sep=2.3em, column sep=2.3em, text height=1.25ex, text depth=0.25ex] 
{ 
X & Y & Z \\
W& W' & Z \\
X' & X' & {} \\
}; 
\path[->] 
(m-1-1)-- node[pos=0.5] {\footnotesize$\mbox{\bf po}$} (m-2-2) 
;
\path[>->]
(m-1-1) edge (m-1-2)
(m-2-1) edge (m-2-2)
(m-1-2) edge (m-2-2)
(m-1-1) edge (m-2-1)
;
\path[->>]
(m-2-2) edge (m-2-3) edge (m-3-2)
(m-2-1) edge (m-3-1)
(m-1-2) edge (m-1-3)
;
\path[-,font=\scriptsize]
(m-1-3) edge [double, thick, double distance=2pt] (m-2-3)
(m-3-1) edge [double, thick, double distance=2pt] (m-3-2)
;
\end{tikzpicture} .
\]
Since $Z \in \pi\mathcal{GP}^{\rm ppr}_{(\omega,\mcB,m)} \subseteq {}^{\perp_1}\mcB$, the central row splits, and so $W' \simeq W \oplus Z$. By part (1), $W' \in \pi\mathcal{GP}^{\rm ppr}_{(\omega,\mcB,m)}$. Then, applying (b) to the central column, we obtain that $Y \in \pi\mathcal{GP}^{\rm ppr}_{(\omega,\mcB,m)}$. 
\end{itemize}
\end{enumerate}
\end{proof}

\begin{remark}
With slightly different hypotheses, the previous result also holds for $m$-periodic $(\mcA,\mcB)$-Gorenstein projective objects. Indeed, if $\mcA$ and $\mcB$ are classes of objects in $\mcC$ with $\mcA$ closed under finite coproducts, direct summands and epikernels, satisfying $\Ext^1(\mcA, \mcB) = 0$, then $M\in \pi\mathcal{GP}_{(\mcA, \mcB, m)}$ if, and only if, $M\oplus W \in \pi\mathcal{GP}_{(\mcA, \mcB, m)}$ with $W \in \mcA\cap \mcB$. This follows as in \cite[Thm. 3.11]{ZH}. Moreover, this property for direct summands can be used to show that the following are equivalent:
\begin{enumerate}[(a)]
\item $\pi\mathcal{GP}_{(\mathcal{A,B},m)}$ is closed under extensions.

\item $\pi\mathcal{GP}_{(\mathcal{A,B},m)}$ is closed under epikernels in $\mcA^{\perp_1}$.

\item $\pi\mathcal{GP}_{(\mathcal{A,B},m)}$ is closed under monocokernels in ${}^{\perp_1}\mcB$.
\end{enumerate}
In summary, it is unlikely that the $m$-periodic $(\mcA,\mcB)$-Gorenstein projective objects form a class closed under direct summands or under extensions. 
\end{remark}

It is well known that the containment
\[
\{ \text{projective modules} \} \  \subseteq \ \{ \text{$n$-strongly Gorenstein projective modules} \}
\] 
may be strict (see Remark~\ref{rem:sGP}-(4)). However, one could be interested to look for objects in $\pi\mathcal{GP}^{\rm ppr}_{(\mcA,\mcB, m)}$ which belong to $\mcA$. This is specified in the following result, which generalizes \cite[Prop. 3.7]{ZH}.

\begin{proposition}\label{pro: (M,M)<m->A}
Let $1 \leq m \leq n$ and $\mcA \subseteq \mcC$ be closed under direct summands and $(n+1)$-rigid. If $M \in \pi\mathcal{GP}^{\rm ppr}_{(\mcA,\mcB, m)}$ and $\Ext^{\leq m}(M,M) = 0$, then $M \in \mcA$.
\end{proposition}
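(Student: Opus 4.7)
My plan is to show that the short exact sequence
\[
M \rightarrowtail A_m \twoheadrightarrow Z_{m-1}(A_\bullet)
\]
extracted from the left end of any $\Hom(\mcA,-)$-acyclic and $\Hom(-,\mcB)$-acyclic $\mcA$-loop $A_\bullet$ at $M$ of length $m$ splits. Once this splitting is established, $M$ becomes a direct summand of $A_m \in \mcA$, and hence $M \in \mcA$ by closure of $\mcA$ under direct summands. In other words, the entire argument reduces to verifying that $\Ext^1(Z_{m-1}(A_\bullet), M) = 0$.

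The verification will combine Proposition \ref{prop:ex} with the shifting Lemma \ref{lem:shifting}. Since $A_\bullet$ is $\Hom(\mcA,-)$-acyclic and $\mcA$ is $(n+1)$-rigid, the implication (a) $\Rightarrow$ (d) of Proposition \ref{prop:ex} yields $M \in \bigcap_{i=1}^n \mcA^{\perp_i}$, i.e.\ $\Ext^{\leq n}(\mcA, M) = 0$. This is precisely the hypothesis required to invoke the second shifting isomorphism of Lemma \ref{lem:shifting} with $B := M$, namely
\[
\Ext^i(Z_{k+1}(A_\bullet), M) \cong \Ext^{i+1}(Z_k(A_\bullet), M)
\]
for every $1 \leq i \leq n-1$ and $k \in \mbZ$. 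Iterating this identification $m-1$ times, with $i = 1, 2, \ldots, m-1$ in succession (each step within the legal range thanks to the assumption $m \leq n$), I obtain
\[
\Ext^1(Z_{m-1}(A_\bullet), M) \cong \Ext^2(Z_{m-2}(A_\bullet), M) \cong \cdots \cong \Ext^m(Z_0(A_\bullet), M) = \Ext^m(M, M),
\]
and the right-hand side vanishes by the hypothesis $\Ext^{\leq m}(M, M) = 0$. The initial short exact sequence therefore splits, completing the argument.

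The only real subtlety is the interplay between the loop length $m$ and the rigidity index $n$: it is precisely the bound $m \leq n$ that allows the shifting chain to be carried out all the way from cohomological degree $1$ down to degree $m$ without leaving the range where the identifications are valid. The boundary case $m = 1$ is degenerate, requiring no shift at all, since one is directly asking for $\Ext^1(M,M) = 0$, which is part of the hypothesis.
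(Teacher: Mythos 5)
Your proof is correct and takes essentially the same approach as the paper: both reduce the claim to showing $\Ext^1(Z_{m-1}(A_\bullet),M)=0$ (so that $M \rightarrowtail A_m \twoheadrightarrow Z_{m-1}(A_\bullet)$ splits), and both obtain this by dimension shifting along the loop using $M \in \bigcap_{i=1}^{n}\mcA^{\perp_i}$ (from Proposition~\ref{prop:ex}) together with $\Ext^{\leq m}(M,M)=0$. The paper packages the shift as an inductive band of vanishings $\Ext^{\leq m-j}(Z_j(A_\bullet),M)=0$, while you phrase it as a single chain of isomorphisms via Lemma~\ref{lem:shifting} with $\mcB=\{M\}$, but the underlying long-exact-sequence computation is the same.
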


\begin{proof}
The case $m = 1$ is straightforward and does not require the assumption that $\Ext^{\leq n}(\mcA,\mcA) = 0$. So we may assume that $m > 1$. Now let $A_\bullet$ be a $\Hom(\mcA,-)$-acyclic and $\Hom(-,\mcB)$-acyclic $\mcA$-loop at $M$ of length $m$, and consider the short exact sequence $Z_1(A_\bullet) \rightarrowtail A_1 \twoheadrightarrow M$. Since $M \in \bigcap^n_{i = 1} \mcA^{\perp_i}$ by Proposition \ref{prop:ex}, and $\Ext^{\leq m}(M,M) = 0$ by the assumption, we obtain that 
\[
\Ext^{\leq m-1}(Z_1(A_\bullet), M) = 0.
\] 
The latter applied to the sequence $Z_2(A_\bullet) \rightarrowtail A_2 \twoheadrightarrow Z_1(A_\bullet)$ yields 
\[
\Ext^{\leq m-2}(Z_2(A_\bullet),M) = 0.
\] 
We can thus note inductively that 
\[
\Ext^{\leq m-j}(Z_j(A_\bullet),M) = 0 \text{ \ for every $1 \leq j < m$}. 
\]
In particular, $\Ext^1(Z_{m-1}(A_\bullet),M) = 0$, which implies that $M \rightarrowtail A_m \twoheadrightarrow Z_{m-1}(A_\bullet)$ is a split exact sequence, and hence $M \in \mcA$ since $\mcA$ is closed under direct summands. 
\end{proof}

\begin{corollary}\label{cor: M in A<-> (MM)<m}
Let $1 \leq m \leq n$ and $\mcA \subseteq \mcC$ be an $(n+1)$-rigid subcategory closed under direct summands. The following are equivalent for any $M \in \pi\mathcal{GP}^{\rm ppr}_{(\mcA,\mcB, m)}$:
\begin{enumerate}[(a)]
\item $M \in \mcA$.

\item $\Ext^{\leq n}(M,M) = 0$.

\item $\Ext^{\leq m}(M,M) = 0$.
\end{enumerate}
In particular, if $\mcA$ is self-orthogonal, then for any $M \in \pi\mathcal{GP}^{\rm ppr}_{(\mcA,\mcB, m)}$ one has that $M \in \mcA$ if, and only if, $M$ is self-orthogonal.
\end{corollary}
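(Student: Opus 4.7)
The plan is to cycle the three conditions via (a) $\Rightarrow$ (b) $\Rightarrow$ (c) $\Rightarrow$ (a). The implication (a) $\Rightarrow$ (b) is immediate from the assumption that $\mcA$ is $(n+1)$-rigid: if $M \in \mcA$, then $\Ext^{\leq n}(M,M) \subseteq \Ext^{\leq n}(\mcA,\mcA) = 0$. The implication (b) $\Rightarrow$ (c) is trivial since $m \leq n$, so that $\Ext^{\leq m}(M,M)$ is a subcollection of the already vanishing $\Ext^{\leq n}(M,M)$.

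The nontrivial step is (c) $\Rightarrow$ (a), and this is exactly the content of Proposition \ref{pro: (M,M)<m->A}: if $M \in \pi\mathcal{GP}^{\rm ppr}_{(\mcA,\mcB,m)}$ satisfies $\Ext^{\leq m}(M,M) = 0$, and $\mcA$ is $(n+1)$-rigid and closed under direct summands (with $m \leq n$), then $M \in \mcA$. So I would simply invoke that proposition to close the cycle.

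For the final ``In particular'' statement, if $\mcA$ is self-orthogonal, meaning $\Ext^{\geq 1}(\mcA,\mcA) = 0$, then $\mcA$ is $(n+1)$-rigid for every $n \geq 1$, so the previous equivalence applies with $n$ arbitrarily large. If $M \in \mcA$, then $M$ inherits self-orthogonality from $\mcA$. Conversely, if $M$ is self-orthogonal, then in particular $\Ext^{\leq m}(M,M) = 0$, hence by (c) $\Rightarrow$ (a) we conclude $M \in \mcA$.

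No real obstacle is expected, since every nontrivial homological content has been packaged into Proposition \ref{pro: (M,M)<m->A}; the corollary is simply a cosmetic reformulation of that proposition plus the tautology encoded in $(n+1)$-rigidity.
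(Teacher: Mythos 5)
Your proposal is correct and matches the paper's intent exactly: the paper gives no separate proof for this corollary, precisely because (a) $\Rightarrow$ (b) follows from $(n+1)$-rigidity, (b) $\Rightarrow$ (c) is trivial as $m\le n$, and (c) $\Rightarrow$ (a) is Proposition~\ref{pro: (M,M)<m->A} verbatim, with the ``in particular'' statement following since self-orthogonality of $\mcA$ makes it $(n+1)$-rigid for every $n$.
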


Below we give a weak version of Definition~\ref{def: piGPacyc} in order to obtain analogous outcomes of Proposition~\ref{pro: (M,M)<m->A} and Corollary~\ref{cor: M in A<-> (MM)<m}. We only present the statements of such results without proofs.

\begin{definition}\label{def: piWGPacyc}
An object $M \in \mcC$ is \textbf{proper weakly $\bm{m}$-periodic $\bm{(\mcA,\mcB)}$-Gorenstein projective} if there exists an $\mcA$-loop at $M$ of length $m$ with cycles in $\mcA{}^{\perp}\cap{}^{\perp}\mcB$. Dually, we have the notion of \textbf{proper weakly $\bm{m}$-periodic $\bm{(\mcA,\mcB)}$-Gorenstein injective objects} in $\mcC$. These classes of objects will be denoted by $\pi\mathcal{WGP}_{(\mcA,\mcB, m)}^{\rm ppr}$ and $\pi\mathcal{WGI}_{(\mcA,\mcB, m)}^{\rm ppr}$, respectively. 
\end{definition}

\begin{proposition}\label{prop:weakM in A<-> (MM)<m}
Let $1 \leq m \leq n$ and $\mcA \subseteq \mcC$ be an $(n+1)$-rigid subcategory closed under direct summands. The following assertions hold true:
\begin{enumerate}
\item If $M \in \pi\mathcal{WGP}^{\rm ppr}_{(\mcA,\mcB, m)}$ and $\Ext^{\leq m}(M,M) = 0$, then $M \in \mcA$.

\item The following are equivalent for any $M \in \pi\mathcal{WGP}^{\rm ppr}_{(\mcA,\mcB, m)}$:
\begin{enumerate}[(a)]
\item $M \in \mcA$.

\item $\Ext^{\leq n}(M,M) = 0$.

\item $\Ext^{\leq m}(M,M) = 0$.
\end{enumerate}
In particular, if $\mcA$ is self-orthogonal, then for any $M \in \pi\mathcal{WGP}^{\rm ppr}_{(\mcA,\mcB, m)}$ one has that $M \in \mcA$ if, and only if, $M$ is self-orthogonal.
\end{enumerate}
\end{proposition}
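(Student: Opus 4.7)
The plan is to mirror the proof strategy of Proposition \ref{pro: (M,M)<m->A} and Corollary \ref{cor: M in A<-> (MM)<m}, exploiting the fact that the proper weakly $m$-periodic hypothesis supplies us directly with $M \in \mcA^\perp$ (rather than merely $M \in \bigcap_{i=1}^n \mcA^{\perp_i}$, which in the non-weak proper case had to be extracted from $(n+1)$-rigidity via Proposition \ref{prop:ex}). This is actually a stronger vanishing than we need, so the argument should go through more cleanly than its non-weak counterpart.

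For part (1), I would fix an $\mcA$-loop $A_\bullet$ at $M$ of length $m$ with all cycles in $\mcA^\perp \cap {}^\perp\mcB$. The case $m=1$ is handled immediately: the sequence $M \rightarrowtail A_1 \twoheadrightarrow M$ splits by $\Ext^1(M,M) = 0$ and $M \in \mcA$ follows from closure under direct summands. For $m > 1$, I would apply $\Ext^*(-,M)$ to each short exact sequence $Z_j(A_\bullet) \rightarrowtail A_j \twoheadrightarrow Z_{j-1}(A_\bullet)$ of the loop. Since $A_j \in \mcA$ and $M \in \mcA^\perp$, we have $\Ext^i(A_j, M) = 0$ for every $i \geq 1$, and the long exact sequence collapses to the dimension-shifting isomorphism $\Ext^i(Z_j(A_\bullet), M) \cong \Ext^{i+1}(Z_{j-1}(A_\bullet), M)$ for every $i \geq 1$. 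Starting from $\Ext^{\leq m}(Z_0(A_\bullet), M) = \Ext^{\leq m}(M,M) = 0$, an immediate induction on $j$ yields
\[
\Ext^{\leq m-j}(Z_j(A_\bullet), M) = 0 \quad \text{for every } 1 \leq j \leq m-1.
\]
Setting $j = m-1$ gives $\Ext^1(Z_{m-1}(A_\bullet), M) = 0$, which forces the short exact sequence $M \rightarrowtail A_m \twoheadrightarrow Z_{m-1}(A_\bullet)$ to split. Since $\mcA$ is closed under direct summands, I conclude $M \in \mcA$.

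For part (2), the implication (a) $\Rightarrow$ (b) is immediate from $(n+1)$-rigidity of $\mcA$; (b) $\Rightarrow$ (c) is trivial because $m \leq n$; and (c) $\Rightarrow$ (a) is exactly part (1). Finally, for the ``in particular'' clause, if $\mcA$ is self-orthogonal then any $M \in \mcA$ automatically satisfies $\Ext^{\geq 1}(M,M) = 0$, while conversely a self-orthogonal $M$ satisfies $\Ext^{\leq m}(M,M) = 0$ and hence lies in $\mcA$ by part (1).

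I do not foresee any serious obstacle, since the vanishing $\Ext^{\geq 1}(\mcA, M) = 0$ provided by the weak proper assumption is strictly stronger than the bounded range exploited in Proposition \ref{pro: (M,M)<m->A}, so the dimension-shifting step requires no careful range bookkeeping against $n$. The only point that deserves attention is verifying that the induction is anchored correctly at $j = 0$ with the hypothesis $\Ext^{\leq m}(M,M) = 0$, and that the decrement in the vanishing range at each step is exactly one, so that $\Ext^1(Z_{m-1}(A_\bullet), M)$ is indeed reached after $m-1$ iterations.
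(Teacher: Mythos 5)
Your proof is correct and matches the paper's intent: the paper omits the proof, stating only that it is analogous to Proposition~\ref{pro: (M,M)<m->A}, and your argument is precisely that analogy, with the simplification you rightly point out that $M \in \mcA^\perp$ is given directly by Definition~\ref{def: piWGPacyc} rather than being extracted from $(n+1)$-rigidity via Proposition~\ref{prop:ex}. The induction, the splitting of $M \rightarrowtail A_m \twoheadrightarrow Z_{m-1}(A_\bullet)$, and part~(2) all check out.
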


As we know from the beginning of Section \ref{sec:acyclicity}, $n$-cluster tilting subcategories provide a wide source of examples of $(n+1)$-rigid subcategories. In the rest of this section, we give some applications related to a generalization of this concept given in Argud\'in and Mendoza's \cite[Def. 2.8]{monroy2021relative}. Specifically, given $n \geq 1$ and a class of objects $\mcX \subseteq \mcC$, we say that $\mathcal{T} \subseteq \mcC$ is \emph{$(n+1)$-$\mathcal{X}$-cluster tilting} in $\mcC$ if the following hold:
\begin{enumerate}
\item $\mathcal{T} = \add(\mathcal{T})$; 

\item There exists $\alpha \subseteq \mathcal{X}^{\perp} \cap \mathcal{T}^{\perp}$, which is a relative cogenerator in $\mathcal{X}$;

\item There exists $\beta \subseteq {}^{\perp}\mathcal{X} \cap {}^{\perp}\mathcal{T}$, which is a relative generator in $\mathcal{X}$;

\item $\mathcal{X}$ is functorially finite;

\item $\mathcal{X}\cap (\bigcap_{i=1}^{n}{}^{\perp_{i}}\mathcal{T}) = \mathcal{T} = \mathcal{X} \cap  (\bigcap_{i=1}^{n}\mathcal{T}^{\perp_{i}})$.
\end{enumerate}



As for (weak) $m$-periodic $(\mcA,\mcB)$-Gorenstein objects, it is clear that 
\[
\pi\mathcal{WGP}_{(\mcA,\mcB, m)}^{\rm ppr} \subseteq \pi\mathcal{GP}_{(\mcA,\mcB, m)}^{\rm ppr}.
\] 
The converse containment is not true en general, although it can be characterized for certain choices of $\mcA$ and $\mcB$, as the following result shows.

\begin{theorem}
Let $\mathcal{T} \subseteq \mcC$ be an $(n+1)$-$\mathcal{X}$-cluster tilting subcategory in $\mcC$ with $n\geq 1$. The following assertions hold true:
\begin{enumerate}
\item For any $m \geq 1$, the following equalities hold 
\[
\mathcal{X}\cap \pi\mathcal{GP}_{(\mathcal{T},\mathcal{T}, m)}^{\rm ppr}=\mathcal{T}=\mathcal{X}\cap \pi\mathcal{GP}_{(\mathcal{T},\mathcal{T}, m)}.
\]

\item If in addition $\mathcal{X}$ is closed under epikernels, the following are equivalent: 
\begin{enumerate}[(a)]
\item $\pi\mathcal{WGP}_{(\mathcal{T,T },m)} = \pi\mathcal{GP}_{(\mathcal{T, T},m)}$ for any $m\geq 1$;

\item $\Ext^{\geq 1}(\mathcal{T, T}) = 0$;

\item $\mathcal{T} = \add(\alpha) = \add(\beta)$;

\item $\mathcal{T} = \mathcal{X}$.
\end{enumerate}
\end{enumerate}
\end{theorem}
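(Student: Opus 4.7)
For part (1), the plan is to sandwich $\mathcal{T}$ between the two intersections. For the containment $\mathcal{T} \subseteq \mathcal{X} \cap \pi\mathcal{GP}^{\rm ppr}_{(\mathcal{T},\mathcal{T},m)}$, each $T \in \mathcal{T}$ sits in the split short exact sequence $T \rightarrowtail T \oplus T \twoheadrightarrow T$ (available since $\mathcal{T} = \add(\mathcal{T})$), which is trivially $\Hom(\mathcal{T},-)$-acyclic and $\Hom(-,\mathcal{T})$-acyclic. This exhibits $T$ as a member of $\pi\mathcal{GP}^{\rm ppr}_{(\mathcal{T},\mathcal{T},1)}$, and by viewing the same sequence as an $m$-fold loop one obtains $T \in \pi\mathcal{GP}^{\rm ppr}_{(\mathcal{T},\mathcal{T},m)}$ for every $m \geq 1$ (compare Remark \ref{rem:sGP}-(2)). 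Since $\pi\mathcal{GP}^{\rm ppr} \subseteq \pi\mathcal{GP}$, it then suffices to prove $\mathcal{X} \cap \pi\mathcal{GP}_{(\mathcal{T},\mathcal{T},m)} \subseteq \mathcal{T}$. Given $M \in \mathcal{X}$ equipped with a $\Hom(-,\mathcal{T})$-acyclic $\mathcal{T}$-loop $A_\bullet$ at $M$, note first that the cluster tilting identity forces $\mathcal{T}$ to be $(n+1)$-rigid. Applying Lemma \ref{lem:shifting} iteratively to the sequences $Z_k(A_\bullet) \rightarrowtail A_k \twoheadrightarrow Z_{k-1}(A_\bullet)$ propagates the first-degree vanishing coming from $\Hom(-,\mathcal{T})$-acyclicity to $\Ext^i(Z_k(A_\bullet),\mathcal{T}) = 0$ for every $1 \leq i \leq n$; taking $k = 0$ yields $M \in \mathcal{X} \cap \bigcap_{i=1}^n \mathcal{T}^{\perp_i} = \mathcal{T}$.

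For part (2), (a) $\Leftrightarrow$ (b) is immediate from Proposition \ref{pro:WSGP=SGP} with $\mathcal{A} = \mathcal{B} = \mathcal{T}$. A preliminary fact I will use repeatedly is that $\alpha, \beta \subseteq \mathcal{T}$: indeed $\alpha \subseteq \mathcal{X} \cap \mathcal{T}^{\perp} \subseteq \mathcal{X} \cap \bigcap_{i=1}^n \mathcal{T}^{\perp_i} = \mathcal{T}$, and symmetrically for $\beta$. The plan is then to close the cycle (b) $\Rightarrow$ (c) $\Rightarrow$ (d) $\Rightarrow$ (b). For (b) $\Rightarrow$ (c), pick $T \in \mathcal{T}$ and apply the relative cogenerator property to obtain a short exact sequence $T \rightarrowtail A \twoheadrightarrow T'$ with $A \in \alpha \subseteq \mathcal{T}$ and $T' \in \mathcal{X}$. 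The Ext long exact sequence, combined with $\Ext^{\geq 1}(\mathcal{T},\mathcal{T}) = 0$, yields $\Ext^i(\mathcal{T}, T') = 0$ for every $i \geq 1$, hence $T' \in \mathcal{X} \cap \mathcal{T}^{\perp} \subseteq \mathcal{T}$. Then $\Ext^1(T', T) = 0$, so the sequence splits and $T$ is a direct summand of $A \in \alpha$, giving $\mathcal{T} \subseteq \add(\alpha)$; the reverse inclusion $\add(\alpha) \subseteq \add(\mathcal{T}) = \mathcal{T}$ is automatic, and the argument for $\beta$ is dual. For (c) $\Rightarrow$ (d), combining $\mathcal{T} = \add(\alpha)$ with $\alpha \subseteq \mathcal{X}^{\perp}$ yields $\mathcal{T} \subseteq \mathcal{X}^{\perp}$, that is $\mathcal{X} \subseteq \bigcap_{i=1}^n {}^{\perp_i} \mathcal{T}$, and the cluster tilting equality then forces $\mathcal{X} \subseteq \mathcal{T}$. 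For (d) $\Rightarrow$ (b), once $\mathcal{T} = \mathcal{X}$ we have $\alpha \subseteq \mathcal{T}^{\perp}$ while $\alpha$ still cogenerates $\mathcal{T}$; iterating the cogenerator produces a coresolution $T \rightarrowtail A_0 \to A_1 \to \cdots$ with $A_k \in \alpha$ and syzygies $T_k \in \mathcal{T}$. The vanishing $\Ext^{\geq 1}(\mathcal{T}, A_k) = 0$ allows dimension shifting along this coresolution, yielding $\Ext^j(\mathcal{T}, T) \cong \Ext^1(\mathcal{T}, T_{j-1}) = 0$ for every $j \geq 2$ (the last equality via $\mathcal{T} \subseteq \mathcal{T}^{\perp_1}$), while $\Ext^1(\mathcal{T}, T) = 0$ is $(n+1)$-rigidity directly.

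The subtle step will be verifying in the (b) $\Rightarrow$ (c) argument that the cokernel $T'$ actually lies in $\mathcal{T}$. The cluster tilting identity demands the vanishing of $\Ext^i(\mathcal{T}, T')$ only for $1 \leq i \leq n$; nevertheless the cleanest route through the long exact sequence produces the stronger conclusion $\Ext^i(\mathcal{T}, T') = 0$ for all $i \geq 1$, using the assumption $\Ext^{\geq 1}(\mathcal{T}, \mathcal{T}) = 0$ in full strength. Once this splitting has been arranged, the direct-summand conclusion $T \in \add(\alpha)$, and with it the whole cycle, follows cleanly.
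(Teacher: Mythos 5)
Your proof is correct, and Part (2) takes a route that is genuinely different from (and cleaner than) the paper's.

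For Part (1) your argument is essentially the same as the paper's: both propagate the first-degree vanishing from the acyclicity condition through the shifting lemma, using $(n+1)$-rigidity of $\mathcal{T}$ (which follows from the identity $\mathcal{T} = \mathcal{X}\cap\bigcap_{i=1}^n \mathcal{T}^{\perp_i}$), and then invoke the cluster tilting identity to land in $\mathcal{T}$. The paper invokes Proposition \ref{prop:ex} and its dual for the two equalities separately; you instead sandwich $\mathcal{T}$ between the two intersections, which is slightly more economical. One notational slip: from $\Ext^i(Z_k(A_\bullet),\mathcal{T})=0$ you should conclude $M\in\mathcal{X}\cap\bigcap_{i=1}^n{}^{\perp_i}\mathcal{T}$ (left orthogonal, not $\mathcal{T}^{\perp_i}$); since the cluster tilting identity gives $\mathcal{T}$ in either case, the conclusion is unaffected.

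For Part (2), the comparison is more interesting. The paper's proof of (b) $\Rightarrow$ (c) first establishes the global statement $\Ext^{\geq 1}(\mathcal{T},\mathcal{X})=0$, which it derives by combining \cite[Prop. 2.6]{HMP} with $\mathcal{X}\subseteq\mathcal{T}^{\wedge}_n$ — the latter is exactly where the hypothesis ``$\mathcal{X}$ is closed under epikernels'' enters. You bypass all of this: you take the cogenerator (resp. generator) sequence $T\rightarrowtail A\twoheadrightarrow T'$ (resp. $X''\rightarrowtail B\twoheadrightarrow T$), use the long exact sequence together with $\alpha\subseteq\mathcal{T}^\perp$ (resp. $\beta\subseteq{}^\perp\mathcal{T}$) and (b) to show the new term lies in $\mathcal{T}^\perp\cap\mathcal{X}$ (resp. ${}^\perp\mathcal{T}\cap\mathcal{X}$), hence in $\mathcal{T}$ by the cluster tilting identity, and then split. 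The splitting uses only (b) again. Similarly your (c) $\Rightarrow$ (d) reads off $\mathcal{T}\subseteq\mathcal{X}^\perp$ directly from $\mathcal{T}=\add(\alpha)$ and $\alpha\subseteq\mathcal{X}^\perp$, then applies the cluster tilting identity on $\mathcal{X}$; and your (d) $\Rightarrow$ (b) replaces the paper's (d) $\Rightarrow$ (c) by dimension shifting along the $\alpha$-coresolution of $T$ inside $\mathcal{T}=\mathcal{X}$ (with the base case $\Ext^{\leq n}(\mathcal{T},\mathcal{T})=0$ from the identity). The net effect is that your proof of the full equivalence in Part (2) never uses the epikernel hypothesis at all, which suggests that hypothesis is superfluous for this particular statement — it appears to be an artifact of the paper's choice to route through $\mathcal{X}\subseteq\mathcal{T}^{\wedge}_n$. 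Your argument is also self-contained, avoiding the external citations to \cite{HMP} and \cite{monroy2021relative} that the paper leans on.
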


\begin{figure}[H]
\centering
\includegraphics[width=0.55\textwidth]{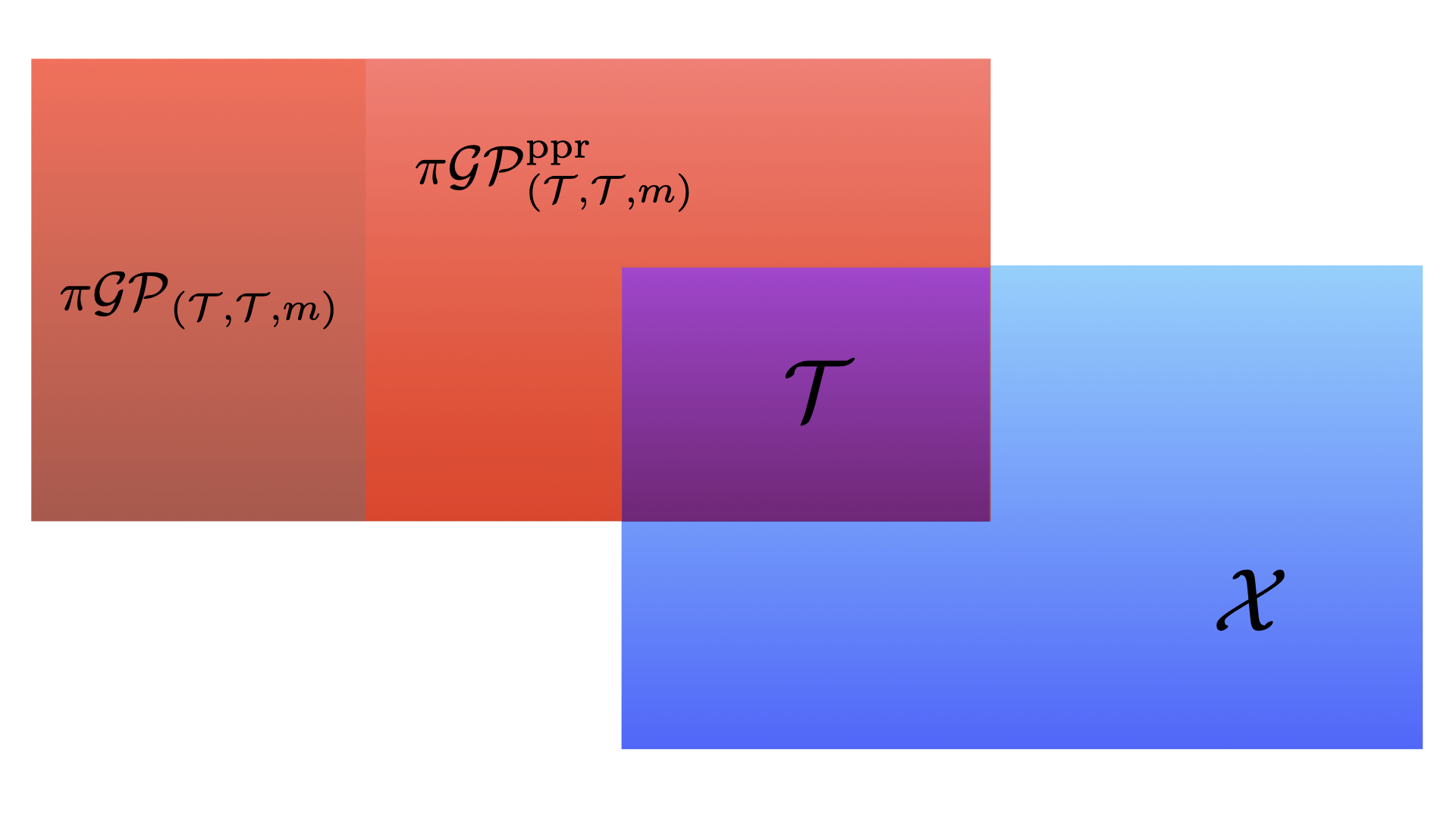}
\caption{Relations between an $(n+1)$-$\mathcal{X}$-cluster tilting subcategory $\mathcal{T}$ and (proper) $m$-periodic Gorenstein projective objects relative to $\mathcal{T}$. }
\end{figure}

\begin{proof} ~\
\begin{enumerate}
\item First, note from Proposition~\ref{prop:ex} that
\[
\pi\mathcal{GP}_{(\mathcal{T},\mathcal{T},m)}^{\rm ppr}\subseteq \Big( \bigcap_{j=1}^n \mathcal{T}^{\perp_{j}} \Big) \cap \pi\mathcal{GP}_{(\mathcal{T},\mathcal{T},m)}.
\] 
Thus, by taking the intersection with $\mathcal{X}$ and the definition of $(n+1)$-$\mathcal{X}$-cluster tilting class, we obtain that
\[
\mathcal{X}\cap\pi\mathcal{GP}_{(\mathcal{T},\mathcal{T},m)}^{\rm ppr}\subseteq \mathcal{X}\cap \Big( \bigcap_{j=1}^n \mathcal{T}^{\perp_{j}} \Big) \cap \pi\mathcal{GP}_{(\mathcal{T},\mathcal{T},m)} = \mathcal{T}\cap \pi\mathcal{GP}_{(\mathcal{T},\mathcal{T},m)}\subseteq \mathcal{T}
\]
while the converse containment is clear. Hence, the first equality holds true. The proof of the second one is analogous by using the dual of Proposition~\ref{prop:ex} instead.

\item The equivalence (a) $\Leftrightarrow$ (b) follows from Proposition~\ref{pro:WSGP=SGP}. 

Regarding (b) $\Leftrightarrow$ (c), the converse implication is clear by the definition of $(n+1)$-$\mathcal{X}$-cluster tilting subcategories. For (b) $\Rightarrow$ (c), condition (b) implies that $\Ext^{\geq 1}(\mathcal{T}, \mathcal{T}^{\wedge}_{n}) = 0$ by \cite[Prop. 2.6]{HMP}. Moreover, under the assumption that $\mcX$ is closed under epikernels, one can show as in \cite[Prop. 5.26]{HMP}, that $\mcX \subseteq \mathcal{T}^{\wedge}_{n}$. Thus, $\Ext^{\geq 1}(\mathcal{T}, \mcX) = 0$. Now, let $T \in \mathcal{T}$. By conditions (3) and (5) in the definition of $(n+1)$-$\mathcal{X}$-cluster tilting subcategories, there is an exact sequence $X \rightarrowtail B \twoheadrightarrow T$ with $B \in \beta$ and $X \in \mcX$. Since $T \in {}^{\perp_{1}}\mcX$ this sequence splits, and so $T \in \add (\beta)$. Thus, $\mathcal{T} \subseteq \add (\beta)$. The converse containment, on the other hand, holds since $\mathcal{T} = \add(\mathcal{T})$ and $\beta \subseteq \mathcal{T}$ (see \cite[Coroll. 2.10]{monroy2021relative}). In a similar way, we can prove $\mathcal{T} = \add(\alpha)$.

The assumption $\mathcal{T}=\add (\beta)$ in condition (c) also implies that 
\[
\Ext^{\geq 1}(\mathcal{T}, \mcX) = \Ext^{\geq 1}(\add (\beta), \mcX) = 0.
\] 
Thus, we get (d) by \cite[Coroll. 2.10]{monroy2021relative}. Finally, for (d) $\Rightarrow$ (c) we only prove the equality $\mathcal{T} = \add(\beta)$. Notice first that since $\mathcal{T} = \mcX \cap (\bigcap_{i=1}^{n}{}^{\perp_{i}}\mathcal{T})$ and $\mathcal{T} = \mcX$, we get $\Ext^1(\mcX, \mcX) = 0$. Now, we see $\mcX\subseteq \add(\beta)$. Indeed, let $M \in \mcX$. By using that $\beta$ is a relative generator in $\mcX$, there exists a short exact sequence $X' \rightarrowtail B' \twoheadrightarrow M$ with $B' \in \beta$ and $X' \in \mcX$, which is split since $\Ext^{1}(\mcX, \mcX) = 0$. Hence, $M \in \add (\beta)$. The remaining containment follows from \cite[Coroll. 2.10]{monroy2021relative}. 
\end{enumerate}
\end{proof}

In the particular case of $(n+1)$-cluster tilting subcategories, we get:

\begin{corollary}\label{cor: SWGPclustertilting}
Let $\mcC$ be an abelian category with enough projective and injective objects, and $\mcD \subseteq \mcC$ be an $(n+1)$-cluster tilting subcategory in $\mcC$. The following hold true:
\begin{enumerate}
\item $\pi\mathcal{GP}^{\rm ppr}_{(\mathcal{D,D},m)} = \mathcal{D} = \pi\mathcal{GP}_{(\mathcal{D,D},m)}$ for every $m \geq 1$.

\item The following are equivalent:
\begin{enumerate}[(a)]
\item $\pi\mathcal{WGP}_{(\mathcal{D, D},m)} = \pi\mathcal{GP}_{(\mathcal{D, D},m)}$ for any $m\geq 1$;

\item $\Ext^{\geq 1}(\mathcal{D, D}) = 0$;

\item $\mcD = \mathcal{P}(\mcC) = \mathcal{I}(\mcC)$;

\item $\mcC = \mcD$.
\end{enumerate}
In particular, if one of the above conditions holds, then every short exact sequence in $\mcC$ is split.
\end{enumerate}
\end{corollary}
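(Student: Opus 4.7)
The plan is to derive this corollary by specializing the preceding theorem to $\mathcal{X} = \mcC$, $\mathcal{T} = \mcD$, $\alpha = \mathcal{I}(\mcC)$, and $\beta = \mathcal{P}(\mcC)$. First I would verify that under these choices $\mcD$ is an $(n+1)$-$\mcC$-cluster tilting subcategory in the sense of the generalized definition recalled before Corollary \ref{cor: SWGPclustertilting}. Closure of $\mcD$ under direct summands is automatic from the equality $\mcD = \bigcap_{0 < i < n+1} \mcD^{\perp_i}$; the containments $\mathcal{I}(\mcC) \subseteq \mcC^{\perp} \cap \mcD^{\perp}$ and $\mathcal{P}(\mcC) \subseteq {}^{\perp}\mcC \cap {}^{\perp}\mcD$ are immediate from the defining properties of injective and projective objects; the existence of enough injectives and projectives in $\mcC$ makes $\mathcal{I}(\mcC)$ and $\mathcal{P}(\mcC)$ a relative cogenerator and a relative generator in $\mcC$, respectively; $\mcC$ is trivially functorially finite in itself; and condition (5) of the definition collapses to $\bigcap_{i=1}^{n} {}^{\perp_i}\mcD = \mcD = \bigcap_{i=1}^{n} \mcD^{\perp_i}$, which is exactly the hypothesis that $\mcD$ is $(n+1)$-cluster tilting.

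Part (1) then follows directly from part (1) of the preceding theorem, since intersecting with $\mathcal{X} = \mcC$ is vacuous. For part (2), observe that $\mathcal{X} = \mcC$ is trivially closed under epikernels, so part (2) of that theorem applies and yields the equivalence of (a), (b), the condition $\mcD = \add(\mathcal{I}(\mcC)) = \add(\mathcal{P}(\mcC))$, and $\mcC = \mcD$. Since the classes $\mathcal{I}(\mcC)$ and $\mathcal{P}(\mcC)$ are each closed under direct summands, we have $\add(\mathcal{I}(\mcC)) = \mathcal{I}(\mcC)$ and $\add(\mathcal{P}(\mcC)) = \mathcal{P}(\mcC)$, so the third equivalent condition becomes precisely $\mcD = \mathcal{P}(\mcC) = \mathcal{I}(\mcC)$, which is (c). For the final assertion, (c) forces every object of $\mcC$ to be both projective and injective, whence every short exact sequence in $\mcC$ necessarily splits.

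The argument is essentially a careful translation of data between the two statements, so I do not anticipate any substantial obstacle. The point most worthy of attention is the choice and verification of the specific cogenerator/generator data $\alpha = \mathcal{I}(\mcC)$ and $\beta = \mathcal{P}(\mcC)$ in the five axioms of $(n+1)$-$\mcC$-cluster tilting, together with the identification of the theorem's generic condition $\mathcal{T} = \add(\alpha) = \add(\beta)$ with the sharper equality $\mcD = \mathcal{P}(\mcC) = \mathcal{I}(\mcC)$ appearing in (c).
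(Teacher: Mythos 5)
Your proposal is correct and matches the paper's intended route: the corollary is stated immediately after the theorem on $(n+1)$-$\mathcal{X}$-cluster tilting subcategories with the remark ``In the particular case of $(n+1)$-cluster tilting subcategories, we get,'' and is obtained by exactly the specialization you describe, namely $\mathcal{X} = \mcC$, $\mathcal{T} = \mcD$, $\alpha = \mathcal{I}(\mcC)$, $\beta = \mathcal{P}(\mcC)$. Your verification of the five axioms (in particular that condition (5) collapses to the defining property of an $(n+1)$-cluster tilting subcategory, that $\mcC$ is trivially functorially finite and closed under epikernels, and that $\add(\mathcal{I}(\mcC)) = \mathcal{I}(\mcC)$, $\add(\mathcal{P}(\mcC)) = \mathcal{P}(\mcC)$ identify the theorem's condition (c) with the corollary's (c)) is precisely what the passage from the theorem to the corollary requires.
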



\section{Intersection properties}\label{sec: intersections}

Another important result of Zhao and Huang has to do with intersections of classes of $m$-strongly Gorenstein projective modules. Specifically, \cite[Thm. 3.5]{ZH} asserts that the intersection of the classes of $m$-strongly and $n$-strongly Gorenstein projective modules yields the class of $(m,n)$-strongly Gorenstein projective modules, where $(m,n)$ denotes the \emph{greatest common divisor} of $m$ and $n$. The purpose of this section is to generalize this result to the relative case.

\begin{theorem}\label{thm: MCD(m,n)}
Let $m, n\geq 1$ and $\mcA \subseteq \mcC$ be a class of objects in $\mcC$ closed under extensions and epikernels, with $0 \in \mcA$ and such that $\Ext^{1}(\mathcal{A, A}) = 0$. Let $\mcB \subseteq \mcC$ be another class satisfying $\Ext^{1}(\mathcal{A, B}) = 0$. Then,
\[
\pi\mathcal{GP}^{\rm ppr}_{(\mcA,\mcB,n)}\cap \pi\mathcal{GP}^{\rm ppr}_{(\mcA,\mcB,m)} =  \pi\mathcal{GP}^{\rm ppr}_{(\mcA,\mcB, (m,n))}.
\]
\end{theorem}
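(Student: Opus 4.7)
The plan is to prove both containments, handling the easy direction by concatenation and the hard one by induction combined with the Euclidean algorithm.

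For the containment $(\supseteq)$, let $d = (m,n)$ and suppose $M$ admits a $\Hom(\mcA,-)$- and $\Hom(-,\mcB)$-acyclic $\mcA$-loop at $M$ of length $d$. Concatenating $m/d$ copies of this loop at $M$ produces a proper $\mcA$-loop of length $m$, and analogously $n/d$ copies yield one of length $n$. Both acyclicity conditions are preserved under such splicing, so $M$ belongs to both $\pi\mathcal{GP}^{\rm ppr}_{(\mcA,\mcB,m)}$ and $\pi\mathcal{GP}^{\rm ppr}_{(\mcA,\mcB,n)}$.

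For $(\subseteq)$, I would proceed by strong induction on $m+n$, assuming without loss of generality $m \leq n$. The case $m=n$ is immediate. Otherwise write $n = qm + r$ with $0 \leq r < m$. If $r = 0$ then $d = m$ and $M$ is already proper $m$-periodic. If $r > 0$, the goal reduces to showing $M \in \pi\mathcal{GP}^{\rm ppr}_{(\mcA,\mcB,r)}$, since then the inductive hypothesis applied to the pair $(m,r)$ (whose gcd is still $d$ and for which $m+r < m+n$) finishes the proof.

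To produce a proper $r$-loop at $M$, let $\sigma$ and $\tau$ be the proper $m$- and $n$-loops at $M$. Since $\mcA$ is closed under extensions with $0 \in \mcA$ it is closed under finite coproducts, and together with $\Ext^{1}(\mcA,\mcA) = 0$ and the $\Hom(\mcA,-)$-acyclicity of $\sigma$ and $\tau$, the relative Schanuel lemma (Lemma \ref{lem: Schanuel}) yields, for every $k > 0$, objects $X_k, Y_k \in \mcA$ with $Z_k(\sigma) \oplus X_k \simeq Z_k(\tau) \oplus Y_k$. Taking $k = n$, using the $m$-periodicity identification $Z_n(\sigma) \simeq Z_r(\sigma)$ and $Z_n(\tau) = M$, one extracts
\[
Z_r(\sigma) \oplus X \simeq M \oplus Y \quad \text{for some } X, Y \in \mcA.
\]
Combining this stable isomorphism with the bounded sub-piece $Z_r(\sigma) \rightarrowtail A_r \to \cdots \to A_1 \twoheadrightarrow M$ of $\sigma$, which inherits both the $\Hom(\mcA,-)$- and $\Hom(-,\mcB)$-acyclicity, and appending split summands $X$ at position $r$ and $Y$ at position $1$, one assembles a proper $\mcA$-loop of length $r$ at $M \oplus Y$.

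The main obstacle is the final descent from a proper $r$-loop at $M \oplus Y$ to one at $M$ itself, since $\pi\mathcal{GP}^{\rm ppr}_{(\mcA,\mcB,r)}$ is not known to be closed under arbitrary direct summands under these hypotheses. I would attempt this by exploiting the matrix components of the Schanuel isomorphism $M \oplus Y \simeq Z_r(\sigma) \oplus X$: the first-coordinate embedding $M \hookrightarrow Z_r(\sigma) \oplus X \hookrightarrow A_r \oplus X$ provides a candidate mono whose cokernel fits in an extension involving $Y$ and $Z_{r-1}(\sigma)$, which one would splice with the trivial proper $1$-loop $Y \rightarrowtail Y \oplus Y \twoheadrightarrow Y$ (valid since $Y \in \mcA$ and $\mcA$ is closed under finite coproducts) iterated to length $r$ in order to cancel out the redundant $Y$-summand. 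The orthogonality hypotheses $\Ext^{1}(\mcA,\mcA) = \Ext^{1}(\mcA,\mcB) = 0$ should ensure that all intermediate objects remain in $\mcA$ and that both acyclicity conditions persist. Once the proper $r$-loop at $M$ is obtained, the Euclidean induction delivers $M \in \pi\mathcal{GP}^{\rm ppr}_{(\mcA,\mcB,d)}$.
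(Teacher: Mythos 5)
Your outline agrees with the paper's up to and including the application of the relative Schanuel lemma (Lemma \ref{lem: Schanuel}), and you correctly flag the crux: after Schanuel you have $Z_r(\sigma) \oplus X \simeq M \oplus Y$ for some $X, Y \in \mcA$, and you need a proper $r$-loop at $M$, not at $M \oplus Y$. But the step you describe as ``the main obstacle'' really is a gap in your argument, not a routine cleanup. Under the stated hypotheses $\pi\mathcal{GP}^{\rm ppr}_{(\mcA,\mcB,r)}$ is not known to be closed under arbitrary direct summands, so building a loop at $M \oplus Y$ and then hoping to extract the $M$-part does not finish. Your suggested remedy --- examining the matrix components of the Schanuel isomorphism and splicing trivial $1$-loops $Y \rightarrowtail Y \oplus Y \twoheadrightarrow Y$ --- is only a heuristic; nothing in your sketch shows that the resulting cokernel lies in $\mcA$, nor that the two acyclicity conditions survive the manipulation.

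The paper sidesteps the descent problem by never forming a loop at $M \oplus Y$. It instead modifies one term of the original loop so that the relevant cycle becomes $M$ on the nose. In the paper's indexing ($n \leq m$, $m = q_1 n + r_1$, $A_\bullet$ the given proper $m$-loop, $M \oplus A \simeq Z_{q_1 n}(A_\bullet) \oplus A'$), the Schanuel isomorphism gives a short exact sequence $A' \rightarrowtail M \oplus A \twoheadrightarrow Z_{q_1 n}(A_\bullet)$; pulling this back against $Z_{q_1 n + 1}(A_\bullet) \rightarrowtail A_{q_1 n + 1} \twoheadrightarrow Z_{q_1 n}(A_\bullet)$ produces an object $X$ sitting in an extension $A' \rightarrowtail X \twoheadrightarrow A_{q_1 n + 1}$, hence $X \in \mcA$ since $\mcA$ is closed under extensions. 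A second pullback, of $Z_{q_1 n + 1}(A_\bullet) \rightarrowtail X \twoheadrightarrow M \oplus A$ along the split monomorphism $M \rightarrowtail M \oplus A$, produces an object $Y$ sitting in $Y \rightarrowtail X \twoheadrightarrow A$, hence $Y \in \mcA$ since $\mcA$ is closed under epikernels. The hypotheses $\Ext^1(\mcA,\mcA) = 0 = \Ext^1(\mcA,\mcB)$ guarantee that $Z_{q_1 n + 1}(A_\bullet) \rightarrowtail Y \twoheadrightarrow M$ is both $\Hom(\mcA,-)$- and $\Hom(-,\mcB)$-acyclic, so replacing $A_{q_1 n + 1}$ by $Y$ in the segment $M \rightarrowtail A_m \to \cdots \to A_{q_1 n + 1}$ yields a proper $\mcA$-loop at $M$ of length $r_1$, with no extraneous summand. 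This two-step pullback is the missing idea; with it in place, your Euclidean induction carries through.
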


\begin{remark}\label{n mid m}
In the case $(m,n) = \min\{ m, n\}$, the assumptions that $\mcA$ is closed under extensions and $\Ext^{1}(\mathcal{A, B}) = 0 = \Ext^{1}(\mathcal{A, A})$ are not needed. 
\end{remark}

\begin{proof}
First, note that since $(m,n)$ divides $n$ and $m$, the containment $(\supseteq)$ is clear. So we just focus on proving $(\subseteq)$. Without loss of generality, and by the previous remark, we may assume that $\min\{ m, n\} = n > (m,n)$. Thus, we have that $n$ does not divide $m$, and so there exist unique positive integers $q_{1}$ and $r_{1}$ such that 
\[
m = q_{1} n + r_{1} \text{ \ where \ } 0 < r_1 < n.
\] 
\begin{itemize}
\item \textbf{Claim:} 
\begin{align}
\pi\mathcal{GP}^{\rm ppr}_{(\mcA,\mcB,n)} \cap \pi\mathcal{GP}^{\rm ppr}_{(\mcA,\mcB,m)} & \subseteq \pi\mathcal{GP}^{\rm ppr}_{(\mcA,\mcB,r_{1})}. \label{eqn:claim1}
\end{align}

Note first that, since $\mcA$ is closed under extensions and $0 \in \mcA$, then it is closed under finite coproducts. Let $M \in \pi\mathcal{GP}^{\rm ppr}_{(\mcA,\mcB,n)}\cap \pi\mathcal{GP}^{\rm ppr}_{(\mcA,\mcB,m)}$ and choose a $\Hom(\mcA,-)$-acyclic and $\Hom(-,\mcB)$-acyclic $\mcA$-loop at $M$ of length $m$, say $A_\bullet$. From Lemma~\ref{lem: Schanuel}, there exist $A, A' \in \mcA$ such that $M \oplus A \simeq Z_{q_1 n}(A_\bullet) \oplus A'$. Now, by taking the pullback of 
\[
Z_{q_1 n+1}(A_\bullet) \rightarrowtail A_{q_1 n + 1} \twoheadrightarrow Z_{q_1 n}(A_{\bullet}) \text{ \ and \ } A' \rightarrowtail M\oplus A \twoheadrightarrow Z_{q_1 n}(A_{\bullet})
\] 
we get the following solid diagram:
\[
\begin{tikzpicture}[description/.style={fill=white,inner sep=2pt}] 
\matrix (m) [matrix of math nodes, row sep=2.5em, column sep=2.5em, text height=1.25ex, text depth=0.25ex] 
{ 
{} & A' & A' \\
Z_{q_1 n+1}(A_\bullet) & X & M\oplus A \\
Z_{q_1 n+1}(A_\bullet) & A_{q_1 n + 1} & Z_{q_1 n}(A_{\bullet}) \\
}; 
\path[->] 
(m-2-2)-- node[pos=0.5] {\footnotesize$\mbox{\bf pb}$} (m-3-3) 
;
\path[>->]
(m-1-2) edge (m-2-2) (m-1-3) edge (m-2-3)
(m-2-1) edge (m-2-2) (m-3-1) edge (m-3-2)
;
\path[->>]
(m-2-2) edge (m-3-2) (m-2-3) edge (m-3-3)
(m-2-2) edge (m-2-3)
(m-3-2) edge (m-3-3)
;
\path[-,font=\scriptsize]
(m-2-1) edge [double, thick, double distance=2pt] (m-3-1)
(m-1-2) edge [double, thick, double distance=2pt] (m-1-3)
;
\end{tikzpicture} .
\]
Notice that $X\in \mcA$ since $\mcA$ is closed under extensions. Now, by considering the pullback of
\[
Z_{q_1 n + 1}(A_\bullet) \rightarrowtail X \twoheadrightarrow M\oplus A \text{ \ and \ } M \rightarrowtail M\oplus A \twoheadrightarrow A
\] 
we have the following solid diagram
\[
\parbox{2.5in}{
\begin{tikzpicture}[description/.style={fill=white,inner sep=2pt}] 
\matrix (m) [ampersand replacement=\&, matrix of math nodes, row sep=2.5em, column sep=2.5em, text height=1.25ex, text depth=0.25ex] 
{ 
Z_{q_1 n+1}(A_\bullet) \& Y \& M \\
Z_{q_1 n+1}(A_\bullet) \& X \& M \oplus A \\
{} \& A \& A \\
}; 
\path[->] 
(m-1-2)-- node[pos=0.5] {\footnotesize$\mbox{\bf pb}$} (m-2-3)
; 
\path[>->]
(m-2-1) edge (m-2-2) (m-1-3) edge (m-2-3)
(m-1-1) edge (m-1-2) (m-1-2) edge (m-2-2)
;
\path[->>]
(m-2-2) edge (m-2-3) (m-2-3) edge (m-3-3)
(m-2-2) edge (m-3-2) (m-1-2) edge (m-1-3)
;
\path[-,font=\scriptsize]
(m-1-1) edge [double, thick, double distance=2pt] (m-2-1)
(m-3-2) edge [double, thick, double distance=2pt] (m-3-3)
;
\end{tikzpicture} 
}
\]
where $Y\in \mcA$ since $\mcA$ is closed under epikernels. Moreover, under the assumptions that $\Ext^{1}(\mathcal{A, B}) = 0 = \Ext^{1}(\mathcal{A, A})$, and that $A_\bullet \in \Ch(\mcA)$ is an exact $\Hom(\mcA,-)$-acyclic and $\Hom(-,\mcB)$-acyclic complex, we get that $M \in {}^{\perp_{1}}\mcB$ and $Z_{q_1 n + 1}(A_{\bullet}) \in \mcA^{\perp_{1}}$, and so $Z_{q_1 n+1}(A_\bullet) \rightarrowtail Y \twoheadrightarrow M$ is an exact $\Hom(\mcA,-)$-acyclic and $\Hom(-,\mcB)$-acyclic short exact sequence. Hence, we have that the exact complex
\[
M \stackrel{f_{m+1}}\rightarrowtail A_m \to \cdots \to A_{q_1 n + 2} \to Y \stackrel{f_1}\twoheadrightarrow M
\]
is $\Hom(\mcA,-)$-acyclic and $\Hom(-,\mcB)$-acyclic, which in turn implies that $M \in \pi\mathcal{GP}^{\rm ppr}_{(\mcA,\mcB,r_{1})}$. \\
\end{itemize}

Continuing with the proof, the case $(n, r_1) = r_1$ follows by Remark~\ref{n mid m}. So, we can assume $(n, r_1) < r_1$. Thus, there exist unique positive integers $q_2$ and $r_2$ such that 
\[
n = q_2 r_1 + r_2 \text{ \ where \ } 0 < r_2 < r_1.
\] 
In a similar way as in the previous claim, one can prove that 
\begin{align}
\pi\mathcal{GP}^{\rm ppr}_{(\mcA,\mcB,n)} \cap \pi\mathcal{GP}^{\rm ppr}_{(\mcA,\mcB,r_1)} & \subseteq \pi\mathcal{GP}^{\rm ppr}_{(\mcA,\mcB,r_2)}. \label{eqn:claim2}
\end{align}
Then, from \eqref{eqn:claim1} and \eqref{eqn:claim2} we obtain
\[
\pi\mathcal{GP}^{\rm ppr}_{(\mcA,\mcB,n)}\cap \pi\mathcal{GP}^{\rm ppr}_{(\mcA,\mcB,m)}\subseteq
\pi\mathcal{GP}^{\rm ppr}_{(\mcA,\mcB,n)}\cap \pi\mathcal{GP}^{\rm ppr}_{(\mcA,\mcB, r_{1})} 
\subseteq \pi\mathcal{GP}^{\rm ppr}_{(\mcA,\mcB,r_{2})}.
\]
By continuing with the Euclidean algorithm, there exists $t\geq 1$ such that 
\[
r_t = q_{t+2} r_{t+1} \text{ \ and \  } r_{t+1} = (m, n),
\] 
and then
\[
\pi\mathcal{GP}^{\rm ppr}_{(\mcA,\mcB,r_{t})}\cap \pi\mathcal{GP}^{\rm ppr}_{(\mcA,\mcB, r_{t+1})}
= \pi\mathcal{GP}^{\rm ppr}_{(\mcA,\mcB, (m, n))}.
\]
Therefore, 
\[
\pi\mathcal{GP}^{\rm ppr}_{(\mcA,\mcB,n)}\cap \pi\mathcal{GP}^{\rm ppr}_{(\mcA,\mcB,m)}\subseteq
\pi\mathcal{GP}^{\rm ppr}_{(\mcA,\mcB,n)}\cap \pi\mathcal{GP}^{\rm ppr}_{(\mcA,\mcB,r_{1})}\subseteq 
\cdots\subseteq \pi\mathcal{GP}^{\rm ppr}_{(\mcA,\mcB, (m, n))}.
\]
\end{proof}

\begin{corollary}\label{cor: n+1cap n}
Let $\mcA \subseteq \mcC$ be a class of objects in $\mcC$ closed under extensions and epikernels, with $0 \in \mcA$ and such that $\Ext^{1}(\mathcal{A, A}) = 0$. Let $\mcB \subseteq \mcC$ be another class satisfying $\Ext^{1}(\mathcal{A, B}) = 0$. Then, 
\[
\pi\mathcal{GP}^{\rm ppr}_{(\mcA,\mcB,m+1)} \cap \pi\mathcal{GP}^{\rm ppr}_{(\mcA,\mcB,m)} =  \pi\mathcal{GP}^{\rm ppr}_{(\mcA,\mcB,1)}.
\]
In particular, 
\[
\bigcap_{m\geq 1}\pi\mathcal{GP}^{\rm ppr}_{(\mcA,\mcB,m)} = \pi\mathcal{GP}^{\rm ppr}_{(\mcA,\mcB,1)}.
\]
\end{corollary}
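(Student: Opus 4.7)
The plan is to derive both equalities as immediate consequences of Theorem \ref{thm: MCD(m,n)}, whose hypotheses on $(\mcA,\mcB)$ are identical to those imposed in the present corollary. Hence no additional technical setup is required; it suffices to invoke the theorem with a suitable choice of periods and then handle the infinite intersection by a short auxiliary argument.

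For the first equality, I would use the elementary fact that any two consecutive positive integers are coprime, that is $(m, m+1) = 1$. Applying Theorem \ref{thm: MCD(m,n)} with $n$ replaced by $m+1$ therefore yields
\[
\pi\mathcal{GP}^{\rm ppr}_{(\mcA,\mcB,m+1)} \cap \pi\mathcal{GP}^{\rm ppr}_{(\mcA,\mcB,m)} = \pi\mathcal{GP}^{\rm ppr}_{(\mcA,\mcB,(m,m+1))} = \pi\mathcal{GP}^{\rm ppr}_{(\mcA,\mcB,1)},
\]
which is exactly the first claim.

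For the second equality, I would split it into two containments. The inclusion $\bigcap_{m\geq 1}\pi\mathcal{GP}^{\rm ppr}_{(\mcA,\mcB,m)} \subseteq \pi\mathcal{GP}^{\rm ppr}_{(\mcA,\mcB,1)}$ is immediate because the right-hand side is the factor corresponding to $m=1$ in the intersection. For the reverse inclusion, given $M \in \pi\mathcal{GP}^{\rm ppr}_{(\mcA,\mcB,1)}$, I would take a bounded exact and $\Hom(\mcA,-)$-acyclic, $\Hom(-,\mcB)$-acyclic sequence of the form $M \rightarrowtail A \twoheadrightarrow M$ with $A \in \mcA$, and glue $m$ consecutive copies of it at $M$ to obtain a $\Hom(\mcA,-)$-acyclic and $\Hom(-,\mcB)$-acyclic $\mcA$-loop at $M$ of length $m$. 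This shows $M \in \pi\mathcal{GP}^{\rm ppr}_{(\mcA,\mcB,m)}$ for every $m \geq 1$. Alternatively, one could iterate the first equality starting from $m = 1$, which directly gives $\pi\mathcal{GP}^{\rm ppr}_{(\mcA,\mcB,1)} \subseteq \pi\mathcal{GP}^{\rm ppr}_{(\mcA,\mcB,k)}$ for every $k \geq 1$.

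There is no substantive obstacle here: the entire content is already encoded in Theorem \ref{thm: MCD(m,n)}, and the only remaining step is the formal passage from a pairwise intersection identity to the infinite intersection, which is handled either by a trivial glueing of a length-one loop or by noting that $(1,k) = 1$ for every $k \geq 1$.
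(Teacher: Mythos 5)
Your proof is correct and is exactly the argument the paper intends: the hypotheses coincide with those of Theorem \ref{thm: MCD(m,n)}, and since $(m,m+1)=1$ and $(1,k)=1$, both equalities follow directly (the gluing argument you give for the reverse inclusion in the second equality is also fine, and amounts to the observation in Remark \ref{rem:sGP}-(2) adapted to the proper case). The paper leaves the corollary without a written proof precisely because it reduces to these one-line observations.
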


\begin{remark}
Notice that the converse implication of Corollary~\ref{cor: n+1cap n} does not necessarily hold true. For every $(n+1)$-cluster tilting subcategory $\mcD$ of $\mcC$, the pair $(\mcA,\mcB) := (\mcD,\mcD)$ satisfies the equality $\Ext^{1}(\mathcal{D,D}) = 0$ in Corollary~\ref{cor: n+1cap n}. However, $\mcD$ may not be closed under epikernels. In fact, if $\mcC$ is an abelian category with enough projective and injective objects, then $\mcD$ is closed under epikernels if, and only if, $\mathcal{D} = \mathcal{P}(\mathcal{C})$ \cite[Rmk. 5.28]{HMP}.
\end{remark}


\section{Relations with GP-admissible pairs}\label{sec: admissible}

The results given in Sections~\ref{sec: hereditary} and~\ref{sec: orth relation} on $m$-periodic Gorenstein objects relative to $(\mcA,\mcB)$ are obtained under the assumption that $\pd_{\mcB}(\mcA) = 0$. This section is devoted to provide more results in the contexts where $(\mcA, \mcB)$ is a GP-admissible pair or an $n$-cotorsion pair. In the sequel, we will frequently refer to \cite{BMS,HMP}.

\begin{proposition}\label{GP=SGP}
Let $(\mcA,\mcB)$ be a GP-admissible pair in $\mcC$. The following equalities hold:
\[
\pi\mathcal{GP}_{(\mathcal{GP}_{(\mcA,\mcB)},\mcB,m)} = \mathcal{GP}_{(\mcA,\mcB)}=\mathcal{GP}_{(\mcA,\mcB^\wedge)} = \pi\mathcal{GP}_{(\mathcal{GP}_{(\mcA,\mcB)},\mcB^\wedge,m)}.
\]
\end{proposition}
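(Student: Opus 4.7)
The plan is to break the chain into three equalities and handle them separately:
\begin{enumerate}
\item[(i)] $\mathcal{GP}_{(\mcA,\mcB)}=\mathcal{GP}_{(\mcA,\mcB^\wedge)}$,
\item[(ii)] $\mathcal{GP}_{(\mcA,\mcB)}=\pi\mathcal{GP}_{(\mathcal{GP}_{(\mcA,\mcB)},\mcB,m)}$,
\item[(iii)] $\mathcal{GP}_{(\mcA,\mcB^\wedge)}=\pi\mathcal{GP}_{(\mathcal{GP}_{(\mcA,\mcB)},\mcB^\wedge,m)}$.
\end{enumerate}
The backbone is the GP-admissibility of $(\mcA,\mcB)$, which in particular yields $\pd_\mcB(\mcA)=0$ and that $\mathcal{GP}_{(\mcA,\mcB)}$ is closed under finite coproducts, contains $\mcA$, and sits inside ${}^\perp\mcB$.

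For (i), the containment $(\supseteq)$ is immediate from $\mcB\subseteq \mcB^\wedge$ and Remark \ref{rem:sGP}-(5) applied at the level of Gorenstein projectives. For the reverse, take $M=Z_0(X_\bullet)$ for some exact $\Hom(-,\mcB)$-acyclic $X_\bullet\in\Ch(\mcA)$. Since $\pd_\mcB(\mcA)=0$, dimension shifting along the short exact sequences $Z_{k+1}(X_\bullet)\rightarrowtail X_k\twoheadrightarrow Z_k(X_\bullet)$ puts every cycle in ${}^\perp\mcB$. The equality ${}^\perp\mcB={}^\perp\mcB^\wedge$ (which is exactly the content of \cite[Prop.~2.6]{HMP}, and is what powers the $\mcY=\mcB^\wedge$ case of Corollary \ref{cor:Li^perp=M^perp}) then upgrades $\Hom(-,\mcB)$-acyclicity of $X_\bullet$ to $\Hom(-,\mcB^\wedge)$-acyclicity. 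Hence $M\in \mathcal{GP}_{(\mcA,\mcB^\wedge)}$.

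For (ii), the containment $(\supseteq)$ is short: $\mathcal{GP}_{(\mcA,\mcB)}$ is closed under finite coproducts and satisfies $\Ext^{\geq 1}(\mathcal{GP}_{(\mcA,\mcB)},\mcB)=0$ (because $\mathcal{GP}_{(\mcA,\mcB)}\subseteq {}^\perp\mcB$), so Remark \ref{rem:sGP}-(4) with $(\mcA',\mcB')=(\mathcal{GP}_{(\mcA,\mcB)},\mcB)$ gives $\mathcal{GP}_{(\mcA,\mcB)}\subseteq \pi\mathcal{WGP}_{(\mathcal{GP}_{(\mcA,\mcB)},\mcB,m)}\subseteq \pi\mathcal{GP}_{(\mathcal{GP}_{(\mcA,\mcB)},\mcB,m)}$. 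For $(\subseteq)$, suppose $M\in \pi\mathcal{GP}_{(\mathcal{GP}_{(\mcA,\mcB)},\mcB,m)}$, witnessed by an exact $\Hom(-,\mcB)$-acyclic loop $M\rightarrowtail G_m\to\cdots\to G_1\twoheadrightarrow M$ with each $G_k\in\mathcal{GP}_{(\mcA,\mcB)}$. I would build a complete $(\mcA,\mcB)$-resolution of $M$ by iterated Horseshoe: each $G_k$ admits an $\mcA$-resolution and an $\mcA$-coresolution (parts of its own complete resolution), both of which are $\Hom(-,\mcB)$-acyclic; splicing these in the vertical direction across the horizontal loop and then gluing the loop periodically in the horizontal direction yields an exact $\Hom(-,\mcB)$-acyclic complex in $\Ch(\mcA)$ with $M$ as its $0$-cycle. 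Equivalently, this is the idempotency identity $\mathcal{GP}_{(\mathcal{GP}_{(\mcA,\mcB)},\mcB)}=\mathcal{GP}_{(\mcA,\mcB)}$ for GP-admissible pairs, which follows from the closure of $\mathcal{GP}_{(\mcA,\mcB)}$ under extensions and epikernels established in \cite{BMS}.

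For (iii), I would rerun the argument of (ii) verbatim with $\mcB$ replaced by $\mcB^\wedge$. The necessary inputs transfer: by (i), $\mathcal{GP}_{(\mcA,\mcB)}=\mathcal{GP}_{(\mcA,\mcB^\wedge)}$, so this class is still closed under finite coproducts and sits inside ${}^\perp\mcB^\wedge$, which is what Remark \ref{rem:sGP}-(4) demands on the $(\supseteq)$ side; the $(\subseteq)$ side uses the same Horseshoe/idempotency argument, now with testing class $\mcB^\wedge$ (and noting that complete $\mcA$-resolutions of the $G_k$ are automatically $\Hom(-,\mcB^\wedge)$-acyclic by (i)). I expect the main obstacle to be the explicit Horseshoe construction in (ii): keeping track of the simultaneous exactness, $\Hom(-,\mcB)$-acyclicity, and $\mcA$-membership of the spliced complex is the only point that is not bookkeeping, though it is well-controlled by $\mathcal{GP}_{(\mcA,\mcB)}\subseteq{}^\perp\mcB$ and the resolving-type properties of $\mathcal{GP}_{(\mcA,\mcB)}$ under the GP-admissibility assumption.
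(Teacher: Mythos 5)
Your proof is correct and the overall architecture coincides with the paper's: you reduce the chain to a sandwich built on the idempotency $\mathcal{GP}_{(\mathcal{GP}_{(\mcA,\mcB)},\mcB)} = \mathcal{GP}_{(\mcA,\mcB)}$ from \cite[Thm.~3.32]{BMS} plus a $\mcB$-versus-$\mcB^\wedge$ transfer, exactly as the published argument does. The differences are matters of how much you re-derive versus cite: for (i) the paper simply invokes \cite[Thm.~3.34(a), Rmk.~3.13 \& Prop.~3.16]{BMS} to get $\mathcal{GP}_{(\mcA,\mcB)} = \mathcal{GP}_{(\mcA,\mcB^\wedge)}$ and the GP-admissibility of $(\mcA,\mcB^\wedge)$, whereas you give a self-contained dimension-shifting argument via ${}^\perp\mcB = {}^\perp\mcB^\wedge$, which is more elementary and transparent; for (ii) the paper takes the shortest route --- glue the loop into a bi-infinite complex to land in $\mathcal{GP}_{(\mathcal{GP}_{(\mcA,\mcB)},\mcB)}$, then apply idempotency --- while your Horseshoe construction amounts to re-proving that idempotency rather than using it, so your concluding remark that ``equivalently, this is the idempotency identity'' is in fact the cleaner path and the one the paper takes. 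One small simplification you could make in (iii): rather than rerunning (ii) verbatim for $\mcB^\wedge$, note that Remark~\ref{rem:sGP}-(5) gives $\pi\mathcal{GP}_{(\mathcal{GP}_{(\mcA,\mcB)},\mcB^\wedge,m)} \subseteq \pi\mathcal{GP}_{(\mathcal{GP}_{(\mcA,\mcB)},\mcB,m)} = \mathcal{GP}_{(\mcA,\mcB)} = \mathcal{GP}_{(\mcA,\mcB^\wedge)}$, and the reverse inclusion is Remark~\ref{rem:sGP}-(4) applied to $\mathcal{GP}_{(\mcA,\mcB)}$ with testing class $\mcB^\wedge$, using that $\mathcal{GP}_{(\mcA,\mcB)}\subseteq {}^\perp\mcB^\wedge$; this avoids needing GP-admissibility of $(\mcA,\mcB^\wedge)$ at all.
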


\begin{proof} 
From \cite[Thm. 3.32]{BMS} we know that $(\mathcal{GP}_{(\mcA,\mcB)},\mcB)$ is a GP-admissible pair and $\mathcal{GP}^{2}_{(\mcA,\mcB)} := \mathcal{GP}_{(\mathcal{GP}_{(\mcA,\mcB)},\mcB)} = 
\mathcal{GP}_{(\mcA,\mcB)}$. So,
\[
\mathcal{GP}_{(\mcA,\mcB)} \subseteq \pi\mathcal{GP}_{(\mathcal{GP}_{(\mcA,\mcB)},\mcB,m)} \subseteq \mathcal{GP}_{(\mathcal{GP}_{(\mcA,\mcB)},\mcB)} = \mathcal{GP}_{(\mcA,\mcB)},
\]
and then the first equality holds true. For the second equality, from \cite[Thm. 3.34 (a), Rmk. 3.13 \& Prop. 3.16]{BMS}, we get that $(\mcA,\mcB^\wedge)$ is a GP-admissible pair and that $\mathcal{GP}_{(\mcA,\mcB)} = \mathcal{GP}_{(\mcA,\mcB^\wedge)}$. Finally, the third equality follows by using the previous arguments to the GP-admissible pair $(\mcA,\mcB^\wedge)$.
\end{proof}

\begin{example}
For $\mcA = \mcB := \mathcal{P}(R)$, we get $\mathcal{GP}(R) = \pi\mathcal{GP}_{(\mathcal{GP}(R), \mathcal{P}(R),m)}$.
\end{example}

A well known result of Bennis and Mahdou's \cite{BM2} on the relation between Gorenstein projective and strongly Gorenstein projective modules asserts that a module is Gorenstein projective if, and only if, it is a direct summand of a strongly Gorenstein projective module. Below we state and prove the relative version of this result for $m$-periodic $(\mathcal{A,B})$-Gorenstein projective objects.

\begin{proposition}\label{pro: GP sumandos directos de mGP}
Let $(\mcA,\mcB)$ be a GP-admissible pair in an AB4 abelian category $\mcC$, with $\mcA$ closed under coproducts. Then, an object in $\mcC$ is $(\mcA,\mcB)$-Gorenstein projective if, and only if, it is a direct summand of a $1$-periodic $(\mcA,\mcB)$-Gorenstein projective object. In other words,
\[
\mathcal{GP}_{(\mcA,\mcB)} = {\rm add}(\pi\mathcal{GP}_{(\mcA,\mcB,1)}).
\]
\end{proposition}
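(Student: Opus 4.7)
The strategy is to prove the two containments separately. For $\add(\pi\mathcal{GP}_{(\mcA,\mcB,1)}) \subseteq \mathcal{GP}_{(\mcA,\mcB)}$, if $N \in \pi\mathcal{GP}_{(\mcA,\mcB,1)}$ then by Remark~\ref{rem:loop} one can glue copies of the defining loop $N \rightarrowtail A_1 \twoheadrightarrow N$ to obtain an exact and $\Hom(-,\mcB)$-acyclic complex in $\Ch(\mcA)$ having $N$ as every cycle, so $N \in \mathcal{GP}_{(\mcA,\mcB)}$. Since $\mathcal{GP}_{(\mcA,\mcB)}$ is closed under direct summands (a standard consequence of GP-admissibility, cf.\ \cite{BMS}), the containment follows.

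For the reverse and substantive inclusion, given $M \in \mathcal{GP}_{(\mcA,\mcB)}$, fix an exact and $\Hom(-,\mcB)$-acyclic complex $A_\bullet \in \Ch(\mcA)$ with $Z_0(A_\bullet) \simeq M$, and form
\[
N := \bigoplus_{k \in \mathbb{Z}} Z_k(A_\bullet), \qquad A := \bigoplus_{k \in \mathbb{Z}} A_k,
\]
where $A \in \mcA$ thanks to the closure of $\mcA$ under coproducts. The AB4 axiom guarantees that the coproduct of the family of short exact sequences $Z_k(A_\bullet) \rightarrowtail A_k \twoheadrightarrow Z_{k-1}(A_\bullet)$ remains exact, and the reindexing $k \mapsto k-1$ supplies a natural isomorphism $\bigoplus_k Z_{k-1}(A_\bullet) \simeq N$. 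Combining these yields a short exact sequence
\[
N \rightarrowtail A \twoheadrightarrow N,
\]
which by Remark~\ref{rem:loop} determines an $\mcA$-loop at $N$ of length $1$ having $M \simeq Z_0(A_\bullet)$ as a canonical direct summand of $N$.

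It remains to verify that this loop is $\Hom(-,\mcB)$-acyclic. For each $B \in \mcB$, every constituent sequence $Z_k(A_\bullet) \rightarrowtail A_k \twoheadrightarrow Z_{k-1}(A_\bullet)$ is $\Hom(-,B)$-acyclic, since $A_\bullet$ itself is. Using the natural isomorphism $\Hom(\bigoplus_k X_k, B) \simeq \prod_k \Hom(X_k, B)$, applying $\Hom(-,B)$ to $N \rightarrowtail A \twoheadrightarrow N$ produces precisely the product over $k$ of the individual $\Hom(-,B)$-images, which is short exact because products of short exact sequences of abelian groups are exact. Hence $N \in \pi\mathcal{GP}_{(\mcA,\mcB,1)}$ and $M \in \add(\pi\mathcal{GP}_{(\mcA,\mcB,1)})$. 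The main technical point is precisely this interplay between AB4 in $\mcC$ and the conversion of coproducts into products under the contravariant Hom functor; the bookkeeping of the reindexing identification $N \simeq \bigoplus_k Z_{k-1}(A_\bullet)$ is the other place where care must be taken.
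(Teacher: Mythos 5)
Your argument is correct and coincides with the paper's proof in all essentials: both set $N = \bigoplus_{k \in \mathbb{Z}} Z_k(A_\bullet)$, both invoke AB4 and closure of $\mcA$ under coproducts to form the exact sequence $N \rightarrowtail \bigoplus_k A_k \twoheadrightarrow N$, and both verify $\Hom(-,\mcB)$-acyclicity via the natural isomorphism converting the coproduct into a product of exact sequences of abelian groups. The paper phrases the construction as $\bigoplus_k A_\bullet[-k]$ (the coproduct of suspension complexes, whose degree-$1$ window is exactly your short exact sequence), but this is only a difference of presentation, not of method.
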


The following proof replicates the arguments from \cite[Thm. 2.7]{BM2}.

\begin{proof}
The ``if'' part follows from the facts that $\pi\mathcal{GP}_{(\mcA,\mcB,1)} \subseteq \mathcal{GP}_{(\mcA,\mcB)}$ and that $\mathcal{GP}_{(\mcA,\mcB)}$ is closed under direct summands (since $(\mcA,\mcB)$ is a GP-admissible pair \cite[Coroll. 3.33]{BMS}). For the ``only if'' part, suppose we are given an object $C \in \mathcal{GP}_{(\mcA,\mcB)}$. Then, $C \simeq Z_0(A_\bullet)$ for some exact and $\Hom(-,\mcB)$-acyclic complex $A_\bullet \in \Ch(\mcA)$. Set
\[
N := \bigoplus_{k \in \mathbb{Z}} Z_k(A_\bullet).
\] 
Then, it is clear that $C$ is a direct summand of $N$. We show that $N$ is $1$-periodic $(\mcA,\mcB)$-Gorenstein projective. Consider the suspension complexes 
\[
A_\bullet[-k] = \cdots \to (A_\bullet[-k])_i \xrightarrow{(-1)^{-k}\partial^{A_\bullet}_{i+k}} (A_\bullet[-k])_{i-1}  \to \cdots.
\] 
Using the fact that $\mcC$ is AB4, we can form the exact complex 
\[
A'_\bullet := \bigoplus_{k \in \mathbb{Z}} A_\bullet[-k],
\] 
which belongs to $\Ch(\mcA)$ since $\mcA$ is closed under coproducts. On the other hand, for every $B \in \mcB$ we have that 
\[
\Hom(A'_\bullet,B) \cong \prod_{k \in \mathbb{Z}} \Hom(A_\bullet[-k],B),
\] 
where each $\Hom(A_\bullet[-k],B)$ is exact, and hence so is $\Hom(A'_\bullet,B)$. Note also that $N \simeq Z_i(A'_\bullet)$ for every $i \in \mathbb{Z}$, since $Z_k(A_\bullet) \simeq Z_0(A_\bullet[-k])$. Thus we have a $\Hom(-,\mcB)$-acyclic short exact sequence $N \rightarrowtail \bigoplus_{k \in \mathbb{Z}} A_k \twoheadrightarrow N$, that is, $N$ is $1$-periodic $(\mcA,\mcB)$-Gorenstein projective. 
\end{proof}

From the previous proof, one can show the following result which was inspired by \cite[Prop. 1.4]{BCE}.

\begin{proposition}
Let $(\mcA, \mcB)$ be a pair of classes of objects in an AB4 abelian category $\mcC$, with $\mcA$ closed under coproducts and direct summands. The following are equivalent:
\begin{enumerate}[(a)]
\item $\mathcal{GP}_{(\mcA, \mcB)} = \mcA$.

\item $\pi\mathcal{GP}_{(\mcA, \mcB, 1)} = \mcA$.
\end{enumerate}
\end{proposition}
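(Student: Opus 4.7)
The plan is to establish both implications separately, using the general containment $\pi\mathcal{GP}_{(\mcA,\mcB,1)} \subseteq \mathcal{GP}_{(\mcA,\mcB)}$ (which holds for any pair $(\mcA,\mcB)$ by the very definitions, since an $\mcA$-loop at $M$ provides an exact complex in $\Ch(\mcA)$ with $M$ as a cycle) together with the direct-summand construction from the previous proposition.

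For (a) $\Rightarrow$ (b), the containment $\pi\mathcal{GP}_{(\mcA,\mcB,1)} \subseteq \mathcal{GP}_{(\mcA,\mcB)} = \mcA$ is immediate. For the reverse, fix $A \in \mcA$. Since $\mcA$ is closed under (in particular, finite) coproducts, $A\oplus A \in \mcA$, and the complex with $A_n := A \oplus A$ in every degree together with differential $\partial_n(a,b) := (b,0)$ is exact with every cycle isomorphic to $A$; moreover, it is a direct sum of split short exact sequences $A \rightarrowtail A\oplus A \twoheadrightarrow A$, so it is $\Hom(-,\mcB)$-acyclic. This furnishes an $\mcA$-loop at $A$ of length $1$ which is $\Hom(-,\mcB)$-acyclic, so $A \in \pi\mathcal{GP}_{(\mcA,\mcB,1)}$.

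For (b) $\Rightarrow$ (a), the containment $\mcA = \pi\mathcal{GP}_{(\mcA,\mcB,1)} \subseteq \mathcal{GP}_{(\mcA,\mcB)}$ is the general one noted above. For the converse, take $C \in \mathcal{GP}_{(\mcA,\mcB)}$ and write $C \simeq Z_0(A_\bullet)$ for some exact $\Hom(-,\mcB)$-acyclic $A_\bullet \in \Ch(\mcA)$. Exactly as in the proof of Proposition~\ref{pro: GP sumandos directos de mGP} (which only uses that $\mcC$ is AB4 and that $\mcA$ is closed under coproducts, but not that $(\mcA,\mcB)$ is GP-admissible), the object
\[
N := \bigoplus_{k \in \mathbb{Z}} Z_k(A_\bullet)
\]
lies in $\pi\mathcal{GP}_{(\mcA,\mcB,1)}$: indeed, $A'_\bullet := \bigoplus_{k \in \mathbb{Z}} A_\bullet[-k] \in \Ch(\mcA)$ is an exact $\Hom(-,\mcB)$-acyclic complex with $Z_i(A'_\bullet) \simeq N$ for every $i \in \mathbb{Z}$. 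By (b), $N \in \mcA$. Since $C$ is a direct summand of $N$ and $\mcA$ is closed under direct summands, $C \in \mcA$, yielding $\mathcal{GP}_{(\mcA,\mcB)} \subseteq \mcA$.

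No step looks genuinely hard; the only point requiring care is verifying that the construction of $N$ in the previous proposition does not rely on the GP-admissibility assumptions, so that it is legitimate to reuse it here under the weaker hypotheses (AB4 plus coproduct and summand closure of $\mcA$).
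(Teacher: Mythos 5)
Your proof is correct and matches the paper's argument closely: the forward implication uses the two containments $\mcA \subseteq \pi\mathcal{GP}_{(\mcA,\mcB,1)} \subseteq \mathcal{GP}_{(\mcA,\mcB)}$ (the first via the split loop built from $A \oplus A$, which is precisely the content of Remark~\ref{rem:sGP}-(4), the second being immediate from the definitions), and the reverse implication reuses the direct-summand construction of Proposition~\ref{pro: GP sumandos directos de mGP}. Your explicit observation that the construction of $N$ in that proposition only needs AB4 and coproduct-closure of $\mcA$ (not GP-admissibility) is a point the paper glosses over with ``As in Proposition~\ref{pro: GP sumandos directos de mGP}''; it is worth making explicit, as you did, and it is indeed true.
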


\begin{proof}
Suppose first that $\mathcal{GP}_{(\mcA, \mcB)} = \mcA$ holds. From Remark~\ref{rem:sGP} (4) we have that
\[
\mcA \subseteq \pi\mathcal{GP}_{(\mcA, \mcB, 1)} \subseteq \mathcal{GP}_{(\mcA, \mcB)} = \mcA.
\]
Therefore, $\pi\mathcal{GP}_{(\mcA, \mcB, 1)} = \mcA$ holds true.

Now suppose that $\pi\mathcal{GP}_{(\mcA, \mcB, 1)} = \mcA$ holds, and let $C \in \mathcal{GP}_{(\mcA,\mcB)}$. As in Proposition~\ref{pro: GP sumandos directos de mGP}, we can get a $1$-periodic $(\mcA,\mcB)$-Gorenstein projective $N$ such that $C$ is a direct summand of $N$. Thus, $N\in \mcA$ since $\pi\mathcal{GP}_{(\mcA, \mcB, 1)}=\mcA$. Hence, $C\in \mcA$ since $\mcA$ is closed under direct summands.
\end{proof}

The following results show that objects in $\mcA$, $(\mcA,\mcB)$-Gorenstein objects and (proper) $m$-periodic $(\mcA,\mcB)$-Gorenstein objects are the same in the testing class $\mcB$, provided that $(\mcA,\mcB)$ is a GP-admissible or a right $n$-cotorsion pair.

\begin{proposition}\label{pro:SGPex=AcapB}
Let $(\mcA,\mcB)$ be a GP-admissible pair in $\mcC$ with $\omega:=\mcA\cap\mcB$ closed under direct summands. The following equalities hold: 
\[
\omega = \pi\mathcal{GP}_{(\mcA,\mcB,m)}^{\rm ppr} \cap \mcB = \pi\mathcal{GP}_{(\mcA,\mcB,m)}
\cap \mcB = \mathcal{GP}_{(\mcA,\mcB)}\cap \mcB.
\]
\end{proposition}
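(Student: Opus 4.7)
The plan is to establish the four equalities by exhibiting the chain
\[
\omega \subseteq \pi\mathcal{GP}_{(\mcA,\mcB,m)}^{\rm ppr} \cap \mcB \subseteq \pi\mathcal{GP}_{(\mcA,\mcB,m)} \cap \mcB \subseteq \mathcal{GP}_{(\mcA,\mcB)} \cap \mcB,
\]
in which the middle two inclusions are immediate from Remark \ref{rem:acy}-(1) and the containment $\pi\mathcal{GP}_{(\mcA,\mcB,m)} \subseteq \mathcal{GP}_{(\mcA,\mcB)}$, and then to close the loop by proving $\mathcal{GP}_{(\mcA,\mcB)} \cap \mcB \subseteq \omega$.

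For $\omega \subseteq \pi\mathcal{GP}_{(\mcA,\mcB,m)}^{\rm ppr} \cap \mcB$, take $W \in \omega$. Since $\mcA$ is closed under finite coproducts (as $(\mcA,\mcB)$ is GP-admissible), glueing infinitely many copies of the split short exact sequence $W \rightarrowtail W \oplus W \twoheadrightarrow W$ yields an $\mcA$-loop at $W$ of length $1$. Being split exact, this loop is simultaneously $\Hom(\mcA,-)$-acyclic and $\Hom(-,\mcB)$-acyclic, so $W \in \pi\mathcal{GP}_{(\mcA,\mcB,1)}^{\rm ppr}$. The very same complex is also an $\mcA$-loop at $W$ of length $m$ for every $m \geq 1$, and since $W \in \omega \subseteq \mcB$, we conclude $W \in \pi\mathcal{GP}_{(\mcA,\mcB,m)}^{\rm ppr} \cap \mcB$.

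For the reverse inclusion $\mathcal{GP}_{(\mcA,\mcB)} \cap \mcB \subseteq \omega$, let $M \in \mathcal{GP}_{(\mcA,\mcB)} \cap \mcB$. Extracting from the defining $\Hom(-,\mcB)$-acyclic exact complex in $\Ch(\mcA)$ we obtain a short exact sequence $M \rightarrowtail A_0 \twoheadrightarrow M'$ with $A_0 \in \mcA$ and $M' \in \mathcal{GP}_{(\mcA,\mcB)}$. Since $\mathcal{GP}_{(\mcA,\mcB)} \subseteq {}^{\perp}\mcB$ (by \cite[Prop. 3.14]{BMS}) and $M \in \mcB$, we have $\Ext^1(M',M) = 0$, so the sequence splits; let $r \colon A_0 \twoheadrightarrow M$ denote the resulting retraction. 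Using that $\omega$ is a relative cogenerator in $\mcA$, choose a short exact sequence $A_0 \rightarrowtail W \twoheadrightarrow A'$ with $W \in \omega$ and $A' \in \mcA$. Because $\pd_{\mcB}(\mcA) = 0$ and $M \in \mcB$, we have $\Ext^1(A',M) = 0$, and applying $\Hom(-,M)$ to the last sequence produces an extension $\tilde{r} \colon W \to M$ of $r$. Then the composite $M \hookrightarrow A_0 \hookrightarrow W \xrightarrow{\tilde{r}} M$ equals $\mathrm{Id}_M$, exhibiting $M$ as a direct summand of $W \in \omega$; closure of $\omega$ under direct summands yields $M \in \omega$.

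The main obstacle is the last inclusion: the splitting of $M \rightarrowtail A_0 \twoheadrightarrow M'$ only puts $M$ as a summand of an object of $\mcA$, not of $\omega$. The key trick is to compose with an $\omega$-cogenerator extension $A_0 \rightarrowtail W \twoheadrightarrow A'$ and to use the hereditary condition $\pd_{\mcB}(\mcA) = 0$ precisely to lift the retraction from $A_0$ to $W$, at which point closure of $\omega$ under direct summands finishes the argument.
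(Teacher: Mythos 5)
Your proof is correct and follows essentially the same decomposition as the paper's: the chain of inclusions
\[
\omega \subseteq \pi\mathcal{GP}_{(\mcA,\mcB,m)}^{\rm ppr} \cap \mcB \subseteq \pi\mathcal{GP}_{(\mcA,\mcB,m)} \cap \mcB \subseteq \mathcal{GP}_{(\mcA,\mcB)}\cap \mcB = \omega.
\]
The one place you diverge is the final equality $\mathcal{GP}_{(\mcA,\mcB)}\cap \mcB = \omega$, which the paper dispatches by citing [BMS, Coroll.~3.25 \& Thm.~3.32], whereas you give a direct argument: split off $M$ from the first $\mcA$-term of the defining complex via $\mathcal{GP}_{(\mcA,\mcB)} \subseteq {}^\perp\mcB$ and $M \in \mcB$, then lift the retraction through the relative-cogenerator extension $A_0 \rightarrowtail W \twoheadrightarrow A'$ (using $\Ext^1(A',M)=0$ from $\pd_\mcB(\mcA)=0$) to realize $M$ as a summand of $W \in \omega$. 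This is a clean, self-contained unpacking of the cited result rather than a different route.
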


\begin{figure}[H]
\centering
\includegraphics[width=0.75\textwidth]{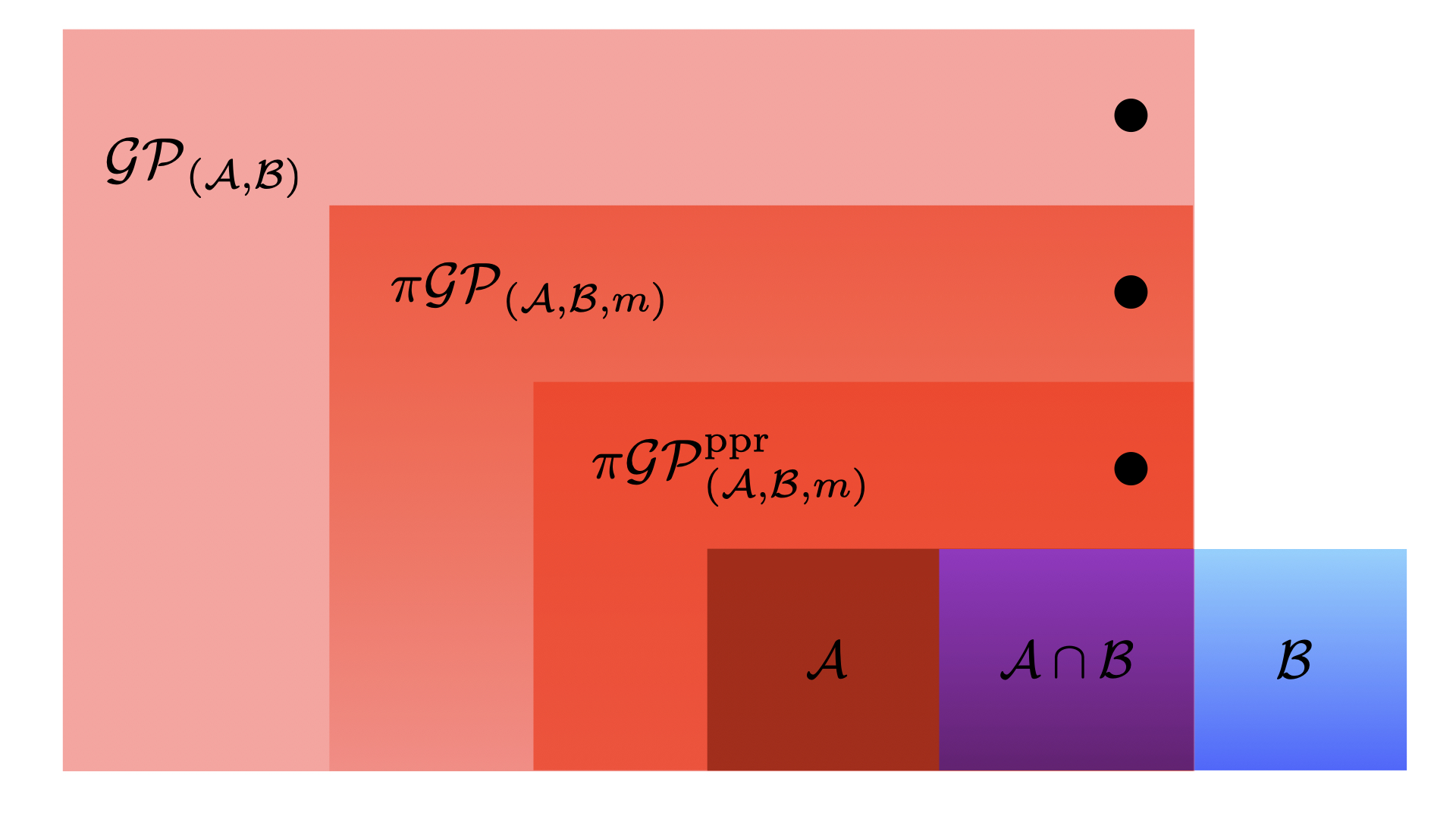}
\caption{Relations between the classes of objects in $\mcA$, $(\mcA,\mcB)$-Gorenstein projective objects, (proper) $m$-periodic $(\mcA,\mcB)$-Gorenstein projective objects, and their intersections with $\mcB$. The dots represent that the containments are strict, as shown in Remarks \ref{rem:sGP}-(4), \ref{rem:acy} and Example \ref{ex: piGPm->piGP1}.}
\end{figure}

\begin{proof}
Notice that 
\[
\omega \subseteq \pi\mathcal{GP}_{(\mcA,\mcB,m)}^{\rm ppr} \cap \mcB
\subseteq \pi\mathcal{GP}_{(\mcA,\mcB,m)} \cap \mcB \subseteq \mathcal{GP}_{(\mcA,\mcB)}\cap \mcB = \omega,
\] 
where the last equality follows from \cite[Coroll. 3.25 \& Thm. 3.32]{BMS}.
\end{proof}

\begin{corollary} 
Let $\mcC$ be an abelian category with enough projective objects, and $\mcB \subseteq \mcC$ be  closed under direct summands and finite coproducts. The following equalities hold:
\[
\mcP \cap \mcB = \pi\mathcal{GP}_{(\mcP,\mcB,m)}^{\rm ppr} \cap \mcB = \mathcal{GP}_{(\mathcal{P},\mcB)}\cap \mcB.
\]
\end{corollary}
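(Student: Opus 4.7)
My plan is to follow the strategy of Proposition~\ref{pro:SGPex=AcapB} and establish the chain
\[
\mcP \cap \mcB \subseteq \pi\mathcal{GP}^{\rm ppr}_{(\mcP,\mcB,m)} \cap \mcB \subseteq \mathcal{GP}_{(\mcP,\mcB)} \cap \mcB \subseteq \mcP \cap \mcB.
\]
Although this looks like a direct application of Proposition~\ref{pro:SGPex=AcapB} with $\mcA := \mcP$, one cannot simply invoke it: the relative cogenerator condition on $\mcP \cap \mcB$ in $\mcP$ need not hold under the hypotheses of the corollary, so $(\mcP, \mcB)$ need not be GP-admissible. The inclusions must therefore be verified directly, though with similar techniques.

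For the leftmost inclusion, take $P \in \mcP \cap \mcB$. Since $\mcP$ is closed under finite coproducts, $P \oplus P \in \mcP$, and the split short exact sequence $P \rightarrowtail P \oplus P \twoheadrightarrow P$ glues to an $\mcP$-loop at $P$ of length $1$ which is trivially $\Hom(\mcP,-)$- and $\Hom(-,\mcB)$-acyclic. Hence $P \in \pi\mathcal{GP}^{\rm ppr}_{(\mcP,\mcB,1)} \subseteq \pi\mathcal{GP}^{\rm ppr}_{(\mcP,\mcB,m)}$ by Remark~\ref{rem:sGP}(2). The middle inclusion $\pi\mathcal{GP}^{\rm ppr}_{(\mcP,\mcB,m)} \subseteq \mathcal{GP}_{(\mcP,\mcB)}$ is immediate: glueing a periodic loop produces an unbounded exact $\Hom(-,\mcB)$-acyclic complex in $\Ch(\mcP)$.

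The crucial step is the closing inclusion $\mathcal{GP}_{(\mcP,\mcB)} \cap \mcB \subseteq \mcP$, which substitutes for the appeal to \cite[Coroll.~3.25, Thm.~3.32]{BMS} used inside Proposition~\ref{pro:SGPex=AcapB}. Given $M$ in this intersection, write $M \simeq Z_0(P_\bullet)$ for some exact $\Hom(-,\mcB)$-acyclic complex $P_\bullet \in \Ch(\mcP)$. A standard dimension-shifting argument (using that $\Hom(P_\bullet, B)$ is exact for every $B \in \mcB$ together with the projectivity of each $P_k$) yields $Z_n(P_\bullet) \in {}^{\perp}\mcB$ for every $n \in \mbZ$. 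In particular $Z_{-1}(P_\bullet) \in {}^{\perp}\mcB$, and since $M \in \mcB$ we obtain $\Ext^1(Z_{-1}(P_\bullet), M) = 0$. Consequently the short exact sequence $M \rightarrowtail P_0 \twoheadrightarrow Z_{-1}(P_\bullet)$ splits, exhibiting $M$ as a direct summand of the projective $P_0$; closure of $\mcP$ under direct summands then gives $M \in \mcP$. The main obstacle is precisely this last step: one must replace the structural arguments for GP-admissible pairs with this elementary splitting argument, which only requires projectivity and $\Hom(-,\mcB)$-acyclicity.
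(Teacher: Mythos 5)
Your proof is correct, and your concern about GP-admissibility is well-founded. The paper's own proof is the one-line appeal ``$(\mcP,\mcB)$ is GP-admissible with $\mcP \cap \mcB$ closed under direct summands,'' after which it invokes Proposition~\ref{pro:SGPex=AcapB}. But as you observe, the relative cogenerator condition --- for every $P \in \mcP$ a short exact sequence $P \rightarrowtail W \twoheadrightarrow P'$ with $W \in \mcP \cap \mcB$ and $P' \in \mcP$ --- forces the sequence to split, since $P'$ is projective; hence $P$ is a direct summand of $W \in \mcB$, and closure of $\mcB$ under direct summands gives $P \in \mcB$. Thus GP-admissibility of $(\mcP,\mcB)$ would require $\mcP \subseteq \mcB$, which is not among the corollary's stated hypotheses. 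Your direct argument bypasses this dependency: dimension-shifting along the exact $\Hom(-,\mcB)$-acyclic complex of projectives yields $Z_n(P_\bullet) \in {}^{\perp}\mcB$ for all $n$, and then $\Ext^1(Z_{-1}(P_\bullet), M) = 0$ splits $M \rightarrowtail P_0 \twoheadrightarrow Z_{-1}(P_\bullet)$, exhibiting $M$ as a direct summand of $P_0 \in \mcP$. In effect you reprove, in the special case $\mcA = \mcP$, the piece of the \cite{BMS} machinery underlying Proposition~\ref{pro:SGPex=AcapB}, with projectivity doing the work that the cogenerator condition would otherwise do. Your route establishes the corollary under exactly the stated hypotheses; the paper's route requires the extra assumption $\mcP \subseteq \mcB$ (equivalently $\mcP \cap \mcB = \mcP$), in which case both arguments agree.
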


\begin{proof} 
$(\mcP,\mcB)$ is GP-admissible with $\mcP \cap \mcB$ closed under direct summands. 
\end{proof}

Recall from \cite[Def. 2.1]{HMP} that $(\mcA, \mcB)$ is said to be \emph{right $n$-cotorsion pair}, with $n\geq 1$, if the following conditions hold:
\begin{enumerate}
\item $\mcB$ is closed under direct summands;
\item $\Ext^{\leq n}(\mcA, \mcB)=0$;
\item For every object $M\in \mcC$, there exists a short exact sequence $M\rightarrowtail B\twoheadrightarrow C$ with $B\in \mcB$ and $C\in \mcA^{\vee}_{n-1}$.
\end{enumerate}

\begin{proposition}\label{ejCT} 
Let $(\mcA,\mcB)$ be a right $n$-cotorsion pair where $\mcA$ is $(n+1)$-rigid. Then, 
\[
\pi\mathcal{GP}_{(\mcA,\mcB,m)}^{\rm ppr} = \mcB\cap \pi\mathcal{GP}_{(\mcA,\mcB,m)}.
\]
\end{proposition}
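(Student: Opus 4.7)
The containment $(\supseteq)$ will follow at once: by definition, every object in $\pi\mathcal{GP}^{\rm ppr}_{(\mcA,\mcB,m)}$ is also in $\pi\mathcal{GP}_{(\mcA,\mcB,m)}$ (Remark~\ref{rem:acy}(1) gives exactly this), so there is nothing to check other than membership in $\mcB$, and this is what we shall secure through the containment $(\subseteq)$.

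For $(\subseteq)$, take $M\in\pi\mathcal{GP}^{\rm ppr}_{(\mcA,\mcB,m)}$ together with an $\mcA$-loop $A_\bullet$ at $M$ of length $m$ which is simultaneously $\Hom(\mcA,-)$-acyclic and $\Hom(-,\mcB)$-acyclic. Since $\mcA$ is $(n+1)$-rigid, Proposition~\ref{prop:ex} applied to $A_\bullet$ yields
\[
M\in\bigcap_{i=1}^{n}\mcA^{\perp_{i}},
\]
i.e.\ $\Ext^{i}(A,M)=0$ for every $A\in\mcA$ and every $1\leq i\leq n$. Next, appealing to the right $n$-cotorsion pair structure of $(\mcA,\mcB)$, there exists a short exact sequence
\[
M \rightarrowtail B \twoheadrightarrow C \qquad \text{with } B\in\mcB \text{ and } C\in\mcA^{\vee}_{n-1}.
\]
The plan is to show this sequence splits, whence $M$ is a direct summand of $B\in\mcB$, and so $M\in\mcB$ since $\mcB$ is closed under direct summands (part of the definition of right $n$-cotorsion pair).

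To prove the splitting I would establish $\Ext^{1}(C,M)=0$ by dimension shifting along a finite $\mcA$-coresolution $0\to C\to A^{0}\to A^{1}\to\cdots\to A^{n-1}\to 0$ of $C$. Writing $K^{0}=C$ and $K^{i+1}=\coker(K^{i}\to A^{i})$, the usual long exact sequences combined with the vanishing $\Ext^{j}(A^{i},M)=0$ for all $1\leq j\leq n$ yield successive isomorphisms
\[
\Ext^{1}(C,M)\cong \Ext^{2}(K^{1},M)\cong\cdots\cong \Ext^{n}(K^{n-1},M)=\Ext^{n}(A^{n-1},M)=0,
\]
where the last term vanishes because $A^{n-1}\in\mcA$ and $M\in\mcA^{\perp_{n}}$. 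The main (and only) delicate point is making sure the shifting stops precisely at the $n$-th degree, which is why the assumption $C\in\mcA^{\vee}_{n-1}$ (coresolution length at most $n-1$) meshes exactly with the $(n+1)$-rigidity of $\mcA$. With $\Ext^{1}(C,M)=0$ the sequence $M\rightarrowtail B\twoheadrightarrow C$ splits, and the argument is complete.
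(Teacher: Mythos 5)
Your argument establishes only one of the two containments in the equality, namely $\pi\mathcal{GP}^{\rm ppr}_{(\mcA,\mcB,m)} \subseteq \mcB \cap \pi\mathcal{GP}_{(\mcA,\mcB,m)}$. Your opening paragraph labels this ``$(\supseteq)$'' and then says the remaining work is ``$(\subseteq)$,'' but what you actually verify in both paragraphs together is just: every $M \in \pi\mathcal{GP}^{\rm ppr}_{(\mcA,\mcB,m)}$ lies in $\pi\mathcal{GP}_{(\mcA,\mcB,m)}$ (Remark~\ref{rem:acy}(1)) and in $\mcB$ (via the cotorsion short exact sequence and the dimension shift). The reverse containment $\mcB \cap \pi\mathcal{GP}_{(\mcA,\mcB,m)} \subseteq \pi\mathcal{GP}^{\rm ppr}_{(\mcA,\mcB,m)}$ is never proved, and it is not a formality: given $M \in \mcB$ with a $\Hom(-,\mcB)$-acyclic $\mcA$-loop of length $m$, one still has to produce (or upgrade to) a loop that is additionally $\Hom(\mcA,-)$-acyclic. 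The paper does this by first identifying $\mcB = \bigcap_{j=1}^n \mcA^{\perp_j}$ from the dual of a cited theorem on right $n$-cotorsion pairs and then invoking the implication (d) $\Rightarrow$ (a) of Proposition~\ref{prop:ex}; your argument never uses the defining condition $\Ext^{\leq n}(\mcA,\mcB)=0$ (which supplies $\mcB \subseteq \bigcap_j \mcA^{\perp_j}$), nor the (d) $\Rightarrow$ (a) direction, so this half of the equality is genuinely missing.

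On the direction you do prove, the reasoning is correct and is a pleasant, self-contained alternative to the paper's route. Where the paper simply quotes the identification $\mcB = \bigcap_{j=1}^n \mcA^{\perp_j}$, you derive the one inclusion $\bigcap_{i=1}^n \mcA^{\perp_i} \subseteq \mcB$ that you actually need from scratch: take the right $n$-cotorsion approximation $M \rightarrowtail B \twoheadrightarrow C$ with $C \in \mcA^\vee_{n-1}$, and dimension-shift $\Ext^1(C,M)$ down the finite $\mcA$-coresolution using $M \in \bigcap_{i=1}^n \mcA^{\perp_i}$ (obtained from Proposition~\ref{prop:ex}(a)$\Rightarrow$(d)) until it lands on $\Ext^n(A^{n-1},M)=0$, forcing the sequence to split. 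The dimension-shift indexing checks out, including the edge case $n=1$ where $C \in \mcA$ directly. What this buys you is independence from the external citation; what it costs is that it only delivers the easy half of the statement. To complete the proof, supply the reverse containment along the lines of the paper, and be aware that the implication (d) $\Rightarrow$ (a) of Proposition~\ref{prop:ex} carries the hypothesis $m \leq n$, which you would need to acknowledge.
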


\begin{proof} 
Since $(\mcA,\mcB)$ is a right $n$-cotorsion pair, it follows from \cite[Dual of Thm. 2.7]{HMP} that $\mcB = \bigcap_{j=1}^n \mcA^{\perp_{j}}.$ Hence, by Proposition~\ref{prop:ex} we have
\[
\mcB\cap \pi\mathcal{GP}_{(\mcA,\mcB,m)} = \Big(\bigcap_{j=1}^n \mcA^{\perp_{j}} \Big) \cap \pi\mathcal{GP}_{(\mcA,\mcB,m)} = \pi\mathcal{GP}_{(\mcA,\mcB,m)}^{\rm ppr}.
\]
\end{proof}

\begin{example}\label{ex: ncot y acy}
Let $R$ be an $n$-Iwanaga-Gorenstein ring (where $n \geq 1$), that is, $R$ is two-sided noetherian with $\id({}_R R) = \id(R_R) = n$. We know from \cite[Ex. 5.1]{HMP} that $(\mathcal{I}(R), \mathcal{GI}(R))$ is a right $n$-cotorsion pair in $\mathrm{Mod}(R)$. Then,
\[
\pi\mathcal{GP}_{(\mathcal{I}(R), \mathcal{GI}(R), m)}^{\rm ppr}=\mathcal{GI}(R)\cap \pi\mathcal{GP}_{
(\mathcal{I}(R), \mathcal{GI}(R), m)}.
\]
\end{example}

The following is a consequence of Propositions~\ref{pro:SGPex=AcapB} and~\ref{ejCT}.

\begin{corollary} 
Let $(\mcA,\mcB)$ be a GP-admissible pair in $\mcC$ with $\omega := \mcA \cap \mcB$ closed under direct summands. If $(\mcA,\mcB)$ is a right $n$-cotorsion pair and $\mcA$ is $(n+1)$-rigid, then 
\[
\pi\mathcal{GP}_{(\mcA,\mcB,m)}^{\rm ppr} = \omega, 
\]
for every $1\leq m\leq n$.
\end{corollary}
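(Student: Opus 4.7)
The proof is essentially a direct chaining of the two preceding propositions, and the hypotheses of the corollary have been calibrated precisely so that both can be applied simultaneously. The plan is as follows.

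First, I would invoke Proposition \ref{ejCT}, which uses the right $n$-cotorsion pair assumption together with $(n+1)$-rigidity of $\mcA$ to yield the equality
\[
\pi\mathcal{GP}_{(\mcA,\mcB,m)}^{\rm ppr} \;=\; \mcB \,\cap\, \pi\mathcal{GP}_{(\mcA,\mcB,m)}.
\]
The internal mechanism here, which I would merely cite, is that the right $n$-cotorsion pair condition forces $\mcB = \bigcap_{j=1}^{n}\mcA^{\perp_{j}}$ via the dual of \cite[Thm. 2.7]{HMP}, while Proposition \ref{prop:ex} identifies this intersection with the condition making an $\mcA$-loop $\Hom(\mcA,-)$-acyclic (this is exactly where the range $1 \leq m \leq n$ matters, since the equivalence (d) $\Rightarrow$ (a) in Proposition \ref{prop:ex} requires $m \leq n$).

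Second, I would apply Proposition \ref{pro:SGPex=AcapB}, whose hypotheses are met since $(\mcA,\mcB)$ is assumed GP-admissible and $\omega = \mcA \cap \mcB$ is closed under direct summands. This proposition gives in particular
\[
\mcB \,\cap\, \pi\mathcal{GP}_{(\mcA,\mcB,m)} \;=\; \omega.
\]
Chaining the two displayed equalities yields $\pi\mathcal{GP}_{(\mcA,\mcB,m)}^{\rm ppr} = \omega$, as required.

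Since the argument is a one-line composition of two already-proven results, there is no genuine obstacle to overcome; the only point that requires care is verifying that the hypotheses of both Proposition \ref{pro:SGPex=AcapB} (GP-admissibility plus closure of $\omega$ under direct summands) and Proposition \ref{ejCT} (right $n$-cotorsion pair plus $(n+1)$-rigidity) are all in force, and that the range $1 \leq m \leq n$ is exactly what makes Proposition \ref{ejCT} applicable to each $m$ in the claimed range.
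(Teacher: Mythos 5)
Your proposal is correct and reproduces exactly the paper's intended argument: the paper gives no written proof but says the corollary "is a consequence of Propositions~\ref{pro:SGPex=AcapB} and~\ref{ejCT}," and chaining $\pi\mathcal{GP}_{(\mcA,\mcB,m)}^{\rm ppr} = \mcB\cap \pi\mathcal{GP}_{(\mcA,\mcB,m)} = \omega$ is the only reasonable reading of that. You also correctly located the actual role of the hypothesis $1\leq m\leq n$ (the implication (d) $\Rightarrow$ (a) of Proposition~\ref{prop:ex}, hidden inside the proof of Proposition~\ref{ejCT}), which is more care than the paper itself displays, since Proposition~\ref{ejCT} is stated without that restriction.
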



\section{Applications to relative finitistic and \\ global Gorenstein dimensions}\label{sec:dimensions}

As mentioned in the introduction, one of the main applications of \cite[Thm. 2.7]{BM2} by Bennis and Mahdou was to show in \cite[Thm. 1.1]{BMglobal} the equality
\begin{align}
\sup\{\Gpd_{R}(M) : M\in \Mod(R)\} & = \sup\{\Gid_{R}(M) : M\in \Mod(R)\}. \label{eqn:BennisMahdou_global}
\end{align}
for any associative ring $R$ with identity. In other words, the global Gorenstein projective and Gorenstein injective dimensions of any ring coincide. On the other hand, global Gorenstein dimensions relative to GP-admissible pairs were studied by Becerril in \cite{Becerril}. So a natural question is wether it is possible to extend equality \eqref{eqn:BennisMahdou_global} for global Gorenstein projective and Gorenstein injective dimensions relative to GP-admissible and GI-admissible pairs. Partially, we answer this in the positive, under certain conditions for such pairs. 

In what follows, given an abelian category $\mcC$, $(\mathcal{A,B})$ will be a GP-admissible pair and $(\mathcal{Z,W})$ will be a GI-admissible pair in $\mcC$. We shall write $\omega := \mcA \cap \mcB$ and $\nu := \mathcal{Z} \cap \mathcal{W}$ for simplicity.  We shall prove several results concerning relative Gorenstein dimensions within the following settings: \\
\begin{itemize}
\item \textbf{Setup 1:}
\begin{enumerate}
\item $\omega$ and $\nu$ are closed under direct summands.

\item $\Ext^{1}(\pi\mathcal{GP}_{(\omega,\mcB,1)}, \nu) = 0$ and $\Ext^1(\omega,\pi\mathcal{GI}_{(\mathcal{Z},\nu,1)}) = 0$.

\item Every object in $\pi\mathcal{GP}_{(\omega, \mcB, 1)}$ admits a $\Hom(-,\nu)$-acyclic $\nu$-coresolution, and every object in $\pi\mathcal{GI}_{(\mathcal{Z}, \nu, 1)}$ admits a $\Hom(\omega,-)$-acyclic $\omega$-resolution. 

\item Every object in $\mathcal{GP}_{(\mathcal{A,B})}$ admits a $\omega$-resolution and a $\Hom(-,\nu)$-acyclic $\nu$-coresolution, and every object in $\mathcal{GI}_{(\mathcal{Z,W})}$ admits a $\nu$-coresolution and a $\Hom(\omega,-)$-acyclic $\omega$-resolution. 

\item $\nu$ (resp., $\omega$) is closed under arbitrary (co)products, in the case $\mcC$ is AB4${}^\ast$ (resp., AB4). \\
\end{enumerate}

\item \textbf{Setup 2:} 
\begin{enumerate}
\item $\omega$ and $\nu$ are closed under direct summands.

\item $\Ext^{1}(\mathcal{GP}_{(\mcA,\mcB)}, \nu) = 0$ and $\Ext^1(\omega,\mathcal{GI}_{(\mathcal{Z,W})}) = 0$.

\item Every object in $\mathcal{GP}_{(\mathcal{A,B})}$ admits a $\Hom(-,\nu)$-acyclic $\nu$-coresolution, and every object in $\mathcal{GI}_{(\mathcal{Z,W})}$ admits a $\Hom(\omega,-)$-acyclic $\omega$-resolution. \\
\end{enumerate}
\end{itemize}

Given $C \in \mcC$, the \emph{$(\mcA,\mcB)$-Gorenstein projective dimension of $C$} \cite[Def. 3.3]{BMS}, denoted ${\rm Gpd}_{(\mcA,\mcB)}(C)$, is defined as the resolution dimension 
\[
{\rm Gpd}_{(\mcA,\mcB)}(C) = \resdim_{\mathcal{GP}_{(\mcA,\mcB)}}(C).
\]
Similarly, 
\[
{\rm Gid}_{(\mathcal{Z,W})}(C) = \coresdim_{\mathcal{GI}_{(\mathcal{Z,W})}}(C)
\]
denotes and defines the \emph{$(\mathcal{Z,W})$-Gorenstein injective dimension of $C$}. If $\mathcal{X}$ is a class of objects in $\mcC$, the \emph{$(\mcA,\mcB)$-Gorenstein projective dimension} and \emph{$(\mathcal{Z,W})$-Gorenstein injective dimension of $\mathcal{X}$} are defined as
\begin{align*}
{\rm Gpd}_{(\mcA,\mcB)}(\mathcal{X}) & := \resdim_{\mathcal{GP}_{(\mcA,\mcB)}}(\mathcal{X}), \\
{\rm Gid}_{(\mathcal{Z,W})}(\mathcal{X}) & := \coresdim_{\mathcal{GI}_{(\mathcal{Z,W})}}(\mathcal{X}).
\end{align*}

\begin{lemma}\label{lem: Gid(piXcapY)<=n} 
The following assertions hold:
\begin{enumerate}
\item If conditions (1), (2) and (3) in Setup 1 are satisfied, then 
\begin{align*}
\Gid_{(\mathcal{Z, W})}(\pi\mathcal{GP}_{(\omega, \mcB, 1)}) & \leq \Gid_{(\mathcal{Z, W})}(\omega), \\ 
\Gpd_{(\mathcal{A,B})}(\pi\mathcal{GI}_{(\mathcal{Z}, \nu, 1)}) & \leq \Gpd_{(\mathcal{A,B})}(\nu).
\end{align*}

\item Under Setup 2., 
\begin{align*}
\Gid_{(\mathcal{Z, W})}(\mathcal{GP}_{(\mcA, \mcB)}) & \leq \Gid_{(\mathcal{Z, W})}(\omega), \\ \Gpd_{(\mathcal{A,B})}(\mathcal{GI}_{(\mathcal{Z}, \mathcal{W})}) & \leq \Gpd_{(\mathcal{A,B})}(\nu).
\end{align*}
\end{enumerate}
\end{lemma}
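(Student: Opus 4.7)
I would prove the two inequalities of each part in parallel, as they are formally dual. Consider the first inequality of Part (1), namely $\Gid_{(\mathcal{Z,W})}(\pi\mathcal{GP}_{(\omega,\mcB,1)}) \leq \Gid_{(\mathcal{Z,W})}(\omega)$. Let $M \in \pi\mathcal{GP}_{(\omega,\mcB,1)}$ and set $n := \Gid_{(\mathcal{Z,W})}(\omega)$, which may be assumed finite. A $\Hom(-,\mcB)$-acyclic $\omega$-loop at $M$ of length one provides a short exact sequence $\eta\colon M \rightarrowtail W \twoheadrightarrow M$ with $W \in \omega$, and hence $\Gid_{(\mathcal{Z,W})}(W) \leq n$. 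My plan has two steps.

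\emph{Step A (finiteness).} The goal is $\Gid_{(\mathcal{Z,W})}(M) < \infty$. Condition (3) of Setup 1 provides a $\Hom(-,\nu)$-acyclic $\nu$-coresolution $\epsilon\colon M \rightarrowtail N^0 \to N^1 \to \cdots$, with cocycles $K^k$. Since $(\mathcal{Z,W})$ is GI-admissible, $\nu \subseteq \mathcal{GI}_{(\mathcal{Z,W})}$ (dual of \cite[Prop. 3.4]{BMS}), so it suffices to show that the cocycle $K^n$ lies in $\mathcal{GI}_{(\mathcal{Z,W})}$: truncation of $\epsilon$ at position $n$ then exhibits $\Gid_{(\mathcal{Z,W})}(M) \leq n$. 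I would verify $K^n \in \mathcal{GI}_{(\mathcal{Z,W})}$ through the weak GI characterization (\cite[Prop. 3.14]{BMS} dualized), reducing the task to showing $K^n \in \mathcal{Z}^{\perp}$ and that $K^n$ admits a $\mathcal{W}$-resolution with cycles in $\mathcal{Z}^{\perp}$. The Ext-vanishing follows by a two-sided dimension-shifting argument: shifting along $\epsilon$ using $\Ext^{\geq 1}(\mathcal{Z},\nu) = 0$ (from GI-admissibility) gives $\Ext^i(\mathcal{Z}, K^n) \simeq \Ext^{i+n}(\mathcal{Z}, M)$ for all $i \geq 1$, while the long exact sequence of $\eta$ together with $\Ext^{>n}(\mathcal{Z},W) = 0$ produces isomorphisms $\Ext^j(\mathcal{Z}, M) \xrightarrow{\sim} \Ext^{j+1}(\mathcal{Z}, M)$ for $j \geq n+1$; combining these with condition (2) yields $\Ext^{\geq 1}(\mathcal{Z}, K^n) = 0$. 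The $\mathcal{W}$-resolution of $K^n$ is assembled via the horseshoe lemma from the infinite $\omega$-resolution $\cdots \to W \to W \twoheadrightarrow M$ produced by splicing $\eta$, together with the finite $\mathcal{GI}_{(\mathcal{Z,W})}$-coresolutions of each copy of $W$.

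\emph{Step B (bound).} Once $\Gid_{(\mathcal{Z,W})}(M) < \infty$, applying the standard three-for-two formula $\Gid(Z) \leq \max\{\Gid(X) - 1, \Gid(Y)\}$ to $\eta$ gives
\[
\Gid_{(\mathcal{Z,W})}(M) \leq \max\{\Gid_{(\mathcal{Z,W})}(M) - 1,\ n\},
\]
forcing $\Gid_{(\mathcal{Z,W})}(M) \leq n$, as desired.

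The second inequality of (1) is proven dually: swap $(\mcA,\mcB) \leftrightarrow (\mathcal{Z,W})$, $\omega \leftrightarrow \nu$, and projective/injective, using the second halves of conditions (2) and (3) together with the dual periodic sequence for a $1$-periodic $(\mathcal{Z},\nu)$-Gorenstein injective object. Part (2) follows the same two-step strategy, but the periodic sequence is replaced by the $\omega$-coresolution of $\mathcal{GP}_{(\mcA,\mcB)}$-objects furnished by GP-admissibility (\cite[Thm. 2.7 \& Prop. 3.16]{BMS}): dimension shifting along this $\omega$-coresolution (using $\Gid_{(\mathcal{Z,W})}(W^k) \leq n$) combined with the $\nu$-coresolution of condition (3) of Setup 2 plays the same role as $\eta$ above. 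The main obstacle throughout is Step A, and specifically the careful orchestration needed to produce the $\mathcal{W}$-resolution of $K^n$ (or of the relevant cocycle in Part (2)) while simultaneously propagating the Ext-vanishing along both resolutions and cycles.
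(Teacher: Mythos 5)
Your overall strategy — exploit the period-one sequence $\eta\colon M \rightarrowtail W \twoheadrightarrow M$ together with the $\nu$-coresolution of $M$ supplied by Setup 1(3) — is on the right track, and in fact those are exactly the two ingredients the paper combines. However, both steps of your proposal contain gaps that prevent the argument from closing.

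In Step A, the Ext-vanishing argument does not actually yield $\Ext^{\geq 1}(\mathcal{Z},K^n)=0$. Combining your isomorphisms $\Ext^i(\mathcal{Z},K^n)\cong\Ext^{i+n}(\mathcal{Z},M)$ and $\Ext^j(\mathcal{Z},M)\cong\Ext^{j+1}(\mathcal{Z},M)$ for $j\geq n+1$ only shows that the groups $\Ext^i(\mathcal{Z},K^n)$ are eventually constant in $i$; you still need some input that makes one of them vanish. You invoke ``condition (2)'' for this, but Setup 1(2) concerns $\Ext^1(\pi\mathcal{GP}_{(\omega,\mcB,1)},\nu)$ and $\Ext^1(\omega,\pi\mathcal{GI}_{(\mathcal{Z},\nu,1)})$ — groups of the form $\Ext(M,\nu)$ and $\Ext(\omega,-)$ — which point in the wrong direction and say nothing about $\Ext^{j}(\mathcal{Z},M)$. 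Similarly, the claimed assembly of a $\mathcal{W}$-resolution of $K^n$ is not coherent: you are trying to splice an $\omega$-resolution of $M$ (going to the left) with $\mathcal{GI}$-coresolutions of $W$ (going to the right), which the horseshoe lemma cannot do, and $\omega=\mcA\cap\mcB$ is in any case not contained in $\mathcal{W}$ under the given hypotheses. Step B then requires the three-for-two estimate $\Gid_{(\mathcal{Z,W})}(C)\leq\max\{\Gid_{(\mathcal{Z,W})}(A)-1,\Gid_{(\mathcal{Z,W})}(B)\}$ for a short exact sequence $A\rightarrowtail B\twoheadrightarrow C$, which is not automatic for relative Gorenstein dimensions and is nowhere justified; moreover, if Step A really produced $K^n\in\mathcal{GI}_{(\mathcal{Z,W})}$ then $\Gid_{(\mathcal{Z,W})}(M)\leq n$ would already follow, making Step B redundant.

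The paper's argument avoids all of this. It observes that $\eta$ is $\Hom(-,\nu)$-acyclic (this is where Setup 1(2) is actually used), and then runs the dual horseshoe lemma on the two $\nu$-coresolutions of the outer copies of $M$ to manufacture a $\nu$-coresolution of $W$ whose degree-$j$ term is $V^j\oplus V^j\in\nu\subseteq\mathcal{GI}_{(\mathcal{Z,W})}$. Because $W\in\omega$ has $\Gid_{(\mathcal{Z,W})}(W)\leq n$, the $n$-th cosyzygy $E^n\oplus E^n$ of the middle column is Gorenstein injective by the dual of \cite[Coroll.~4.10]{BMS}; then closure of $\mathcal{GI}_{(\mathcal{Z,W})}$ under direct summands (which is why $\nu$ is required to be closed under direct summands in Setup 1(1)) forces $E^n\in\mathcal{GI}_{(\mathcal{Z,W})}$, and the left column gives $\Gid_{(\mathcal{Z,W})}(M)\leq n$ at once. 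If you want to repair your write-up, the cleanest fix is to abandon the direct Ext computation and the three-for-two, and instead run the horseshoe lemma to transfer the bound from $W$ to $M$ via the direct-summand property.
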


\begin{proof} \
\begin{enumerate}
\item Let $M \in \pi\mathcal{GP}_{(\omega, \mcB, 1)}$, and without loss of generality let $n := \Gid_{(\mathcal{Z, W})}(\omega) < \infty$. First, there exists an exact $\Hom(-,\mcB)$-acyclic sequence $M \rightarrowtail W \twoheadrightarrow M$ with $W \in \omega$. This sequence is also $\Hom(-, \nu)$-acyclic by condition (2) in Setup 1. Now, from condition (3) and the dual of \cite[Lem. 2.4]{BMPbalanced} we can construct the following solid diagram
\[
\parbox{2.5in}{
\begin{tikzpicture}[description/.style={fill=white,inner sep=2pt}] 
\matrix (m) [ampersand replacement=\&, matrix of math nodes, row sep=2.5em, column sep=2.5em, text height=1.25ex, text depth=0.25ex] 
{ 
M \& W \& M \\ 
V^0 \& V^0 \oplus V^0 \& V^0 \\
\vdots \& \vdots \& \vdots \\
V^{n-1} \& V^{n-1} \oplus V^{n-1} \& V^{n-1} \\
E^n \& E^n \oplus E^n \& E^n \\
}; 
\path[->] 
(m-2-1) edge (m-3-1) (m-3-1) edge (m-4-1)
; 
\path[>->]
(m-1-1) edge (m-1-2) edge (m-2-1)
(m-1-2) edge (m-2-2)
(m-1-3) edge (m-2-3)
(m-2-1) edge (m-2-2)
(m-4-1) edge (m-4-2)
(m-5-1) edge (m-5-2)
(m-2-2) edge (m-3-2)
(m-3-2) edge (m-4-2)
(m-2-3) edge (m-3-3)
(m-3-3) edge (m-4-3)
;
\path[->>]
(m-4-1) edge (m-5-1) (m-4-2) edge (m-5-2) (m-4-3) edge (m-5-3)
(m-5-2) edge (m-5-3)
(m-1-2) edge (m-1-3)
(m-2-2) edge (m-2-3)
(m-4-2) edge (m-4-3)
;
\end{tikzpicture} 
}
\]
where $V^k \in \nu$ for every $0 \leq k \leq n-1$. Notice that $V^j \oplus V^j \in \nu \subseteq \mathcal{GI}_{(\mathcal{Z, W})}$ since $\nu$ is closed under finite coproducts. Thus, by the dual of \cite[Coroll. 4.10]{BMS} and condition (1) we get that $E^n \oplus E^n \in \mathcal{GI}_{(\mathcal{Z, W})}$, and then $E_n \in \mathcal{GI}_{(\mathcal{Z, W})}$ by the dual of \cite[Coroll. 3.33]{BMS}. The left-hand side column yields $\Gid_{(\mathcal{Z,W})}(M) \leq n$. 

\item It suffices to notice that for every $M \in \mathcal{GP}_{(\mcA,\mcB)}$ we can consider a $\Hom(-,\mcB)$-acyclic exact sequence $M \rightarrowtail W \twoheadrightarrow M'$ with $W \in \omega$ and $M'\in \mathcal{GP}_{(\mathcal{A,B})}$ \cite[Prop. 3.14 \& Coroll. 3.25]{BMS}. The rest follows as in part (1).
\end{enumerate}
\end{proof}

The following result is another version of Proposition \ref{pro: GP sumandos directos de mGP}.

\begin{proposition}\label{prop: direct summd of pi1GP}
Let $\mcC$ be an AB4 and AB4${}^\ast$ abelian category. If conditions (4) and (5) in Setup 1 hold, then the following equalities hold:
\begin{align*}
\mathcal{GP}_{(\mathcal{A,B})} & = \mathcal{GP}_{(\omega,\mcB)} = {\rm add}(\pi\mathcal{GP}_{(\omega,\mcB,1)}), \\
\mathcal{GI}_{(\mathcal{Z,W})} & = \mathcal{GI}_{(\mathcal{Z},\nu)} = {\rm add}(\pi\mathcal{GI}_{(\mathcal{Z},\nu,1)}).
\end{align*}
\end{proposition}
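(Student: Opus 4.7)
The plan is to establish the chain of $\mathcal{GP}$ equalities; the $\mathcal{GI}$ statements will then follow by a completely dual argument using the AB4${}^\ast$ hypothesis and the closure of $\nu$ under products from condition (5). The containment $\mathcal{GP}_{(\omega,\mcB)} \subseteq \mathcal{GP}_{(\mcA,\mcB)}$ is immediate since $\omega \subseteq \mcA$. For the reverse inclusion, given $M \in \mathcal{GP}_{(\mcA,\mcB)}$, I would splice at $M$ two pieces in $\Ch(\omega)$: on the right, a coresolution $M \rightarrowtail W^0 \to W^1 \to \cdots$ with $W^k \in \omega$ and cycles in $\mathcal{GP}_{(\mcA,\mcB)}$, obtained by iterating the standard short exact sequence $M \rightarrowtail W \twoheadrightarrow M'$ with $W \in \omega$ and $M' \in \mathcal{GP}_{(\mcA,\mcB)}$ available from GP-admissibility via \cite[Coroll.~3.25 and Thm.~3.32]{BMS}; on the left, the $\omega$-resolution $\cdots \to W_1 \to W_0 \twoheadrightarrow M$ supplied by condition (4) of Setup~1. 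To verify that the spliced complex is $\Hom(-,\mcB)$-acyclic, I would note that $\omega \subseteq \mcA \subseteq {}^{\perp}\mcB$ (using $\pd_{\mcB}(\mcA) = 0$) and $M \in {}^{\perp}\mcB$ by \cite[Prop.~3.14]{BMS}, and then run an inductive dimension-shifting argument on the short exact sequences $K_{i+1} \rightarrowtail W_i \twoheadrightarrow K_i$ (with $K_{-1} := M$) to place every cycle in ${}^{\perp}\mcB$; the right-hand cycles are already in ${}^{\perp}\mcB$ by construction.

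For the second equality $\mathcal{GP}_{(\omega,\mcB)} = \add(\pi\mathcal{GP}_{(\omega,\mcB,1)})$, the containment $(\supseteq)$ is a direct consequence of the previous identification together with \cite[Coroll.~3.33]{BMS}, which ensures that $\mathcal{GP}_{(\mcA,\mcB)}$ is closed under direct summands. For $(\subseteq)$, the argument mirrors the proof of Proposition~\ref{pro: GP sumandos directos de mGP}: starting from $M \simeq Z_0(W_\bullet)$ for some exact and $\Hom(-,\mcB)$-acyclic complex $W_\bullet \in \Ch(\omega)$, I would form $W'_\bullet := \bigoplus_{k \in \mathbb{Z}} W_\bullet[-k]$, which lies in $\Ch(\omega)$ by condition (5) of Setup~1 combined with the AB4 hypothesis, remains exact because coproducts preserve exactness in $\mcC$, and is $\Hom(-,\mcB)$-acyclic because $\Hom(W'_\bullet, B) \cong \prod_{k \in \mathbb{Z}} \Hom(W_\bullet[-k], B)$ and products are exact in $\Mod(\mathbb{Z})$. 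Then $N := \bigoplus_{k \in \mathbb{Z}} Z_k(W_\bullet) \simeq Z_i(W'_\bullet)$ for every $i \in \mathbb{Z}$ belongs to $\pi\mathcal{GP}_{(\omega,\mcB,1)}$ via the short exact sequence $N \rightarrowtail \bigoplus_{k \in \mathbb{Z}} W_k \twoheadrightarrow N$, and $M$ appears as a direct summand of $N$.

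The main obstacle I expect is the first step, and specifically the verification that the $\omega$-resolution supplied by condition (4) is $\Hom(-,\mcB)$-acyclic; this is where the interplay between $\pd_{\mcB}(\mcA) = 0$, $\omega \subseteq \mcA$, and $\mathcal{GP}_{(\mcA,\mcB)} \subseteq {}^{\perp}\mcB$ is essential. Once that point is settled, the remainder amounts either to a direct appeal to results in \cite{BMS} or to a faithful transcription of the AB4 summand construction already carried out in the proof of Proposition~\ref{pro: GP sumandos directos de mGP}.
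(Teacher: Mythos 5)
Your proposal is correct and follows essentially the same route as the paper's proof: splice an $\omega$-coresolution (coming from GP-admissibility, equivalently from the identification $\mathcal{WGP}_{(\omega,\mathcal{B})}=\mathcal{GP}_{(\mathcal{A,B})}$ in \cite[Thm.~3.32]{BMS}) with the $\omega$-resolution from condition (4), check cycles lie in ${}^{\perp}\mathcal{B}$, and then reuse the AB4 direct-sum argument from Proposition~\ref{pro: GP sumandos directos de mGP}. The only cosmetic difference is in verifying the left-hand cycles are in ${}^{\perp}\mathcal{B}$: the paper appeals to $\mathcal{GP}_{(\mathcal{A,B})}$ being left thick (so the epikernels stay Gorenstein projective, hence in ${}^{\perp}\mathcal{B}$), whereas you reach the same conclusion by a direct $\Ext$-dimension shift, which is equivalent.
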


\begin{proof}
From \cite[Thm. 3.32]{BMS} we have that the equality $\mathcal{WGP}_{(\omega, \mcB)}=\mathcal{GP}_{(\mcA, \mcB)}$. On the other hand, from the definition of the class $\mathcal{WGP}_{(\omega, \mcB)}$ and the assumption one can find for every $M \in \mathcal{WGP}_{(\omega, \mcB)}$ an exact complex
\[
W_{\bullet} \colon \cdots \to W_1 \to W_0 \to W_{-1} \to \cdots
\]
with $W_k \in \omega$ and $Z_k(W_\bullet) \in {}^{\perp}\mcB$ for every $k \in \mathbb{Z}$, and such that $M \simeq Z_0(W_{\bullet})$. Note that $Z_k(W_\bullet) \in {}^{\perp}\mcB$ holds for every $k > 0$ since $\omega \subseteq \mathcal{WGP}_{(\omega, \mcB)} = \mathcal{GP}_{(\mcA, \mcB)}$, $\mathcal{GP}_{(\mcA, \mcB)}$ is left thick and $\mathcal{GP}_{(\mcA, \mcB)} \subseteq {}^{\perp}\mcB$ (see \cite[Corolls. 3.15 \& 3.33]{BMS}). The rest of the proof follows as in the ``only if'' part of Proposition \ref{pro: GP sumandos directos de mGP}. 
\end{proof}

\begin{proposition}\label{pro: Gid(GP)<=n}
Let $\mcC$ be an AB4 and AB4${}^\ast$ abelian category. Under Setup 1, the following inequalities hold:
\begin{align*}
\Gid_{(\mathcal{Z, W})}(\mathcal{GP}_{(\mcA, \mcB)}) & \leq \Gid_{(\mathcal{Z, W})}(\omega), \\ 
\Gpd_{(\mathcal{A,B})}(\mathcal{GI}_{(\mathcal{Z},\mathcal{W})}) & \leq \Gpd_{(\mathcal{A,B})}(\nu).
\end{align*}
\end{proposition}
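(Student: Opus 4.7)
The plan is to reduce the bound on $\mathcal{GP}_{(\mcA,\mcB)}$ to the bound on the smaller class $\pi\mathcal{GP}_{(\omega,\mcB,1)}$ already established in Lemma \ref{lem: Gid(piXcapY)<=n}(1), exploiting the direct-summand description of $(\mcA,\mcB)$-Gorenstein projective objects furnished by Proposition \ref{prop: direct summd of pi1GP}. Set $n := \Gid_{(\mathcal{Z,W})}(\omega)$, which we may assume finite, and fix $M \in \mathcal{GP}_{(\mcA,\mcB)}$. Proposition \ref{prop: direct summd of pi1GP} yields an object $M'$ with $N := M \oplus M' \in \pi\mathcal{GP}_{(\omega,\mcB,1)}$ (note that conditions (4) and (5) of Setup 1 are in force), and Lemma \ref{lem: Gid(piXcapY)<=n}(1) gives $\Gid_{(\mathcal{Z,W})}(N) \leq n$.

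To transfer the bound from $N$ to its direct summand $M$, I would mimic the strategy of the proof of Lemma \ref{lem: Gid(piXcapY)<=n}(1). By Setup 1 condition (4), both $M$ and $M'$ admit $\Hom(-,\nu)$-acyclic $\nu$-coresolutions; their termwise direct sum is such a coresolution of $N$, and truncating at degree $n$ produces an exact sequence
\[
N \rightarrowtail V^0 \oplus V^{0\prime} \to \cdots \to V^{n-1} \oplus V^{n-1\prime} \twoheadrightarrow E^n \oplus E^{n\prime},
\]
with all middle terms in $\nu \subseteq \mathcal{GI}_{(\mathcal{Z,W})}$. Since $\Gid_{(\mathcal{Z,W})}(N) \leq n$ and this coresolution is $\Hom(-,\nu)$-acyclic, the dual of \cite[Coroll. 4.10]{BMS} forces $E^n \oplus E^{n\prime} \in \mathcal{GI}_{(\mathcal{Z,W})}$; closure of $\mathcal{GI}_{(\mathcal{Z,W})}$ under direct summands (dual of \cite[Coroll. 3.33]{BMS}) then gives $E^n \in \mathcal{GI}_{(\mathcal{Z,W})}$. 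The truncated coresolution $M \rightarrowtail V^0 \to \cdots \to V^{n-1} \twoheadrightarrow E^n$ of $M$ therefore witnesses $\Gid_{(\mathcal{Z,W})}(M) \leq n$. Taking the supremum over $M$ delivers the first inequality.

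The second inequality $\Gpd_{(\mcA,\mcB)}(\mathcal{GI}_{(\mathcal{Z,W})}) \leq \Gpd_{(\mcA,\mcB)}(\nu)$ follows by the fully dual argument, exchanging the roles of $(\mcA,\mcB)$ with $(\mathcal{Z,W})$, of $\omega$ with $\nu$, and passing from coresolutions to resolutions; Proposition \ref{prop: direct summd of pi1GP} supplies the symmetric identity $\mathcal{GI}_{(\mathcal{Z,W})} = \add(\pi\mathcal{GI}_{(\mathcal{Z},\nu,1)})$, and Lemma \ref{lem: Gid(piXcapY)<=n}(1) its dual bound. The main obstacle is ensuring in the displayed step above that the $n$-th cokernel of the \emph{specific} $\nu$-coresolution constructed from $M$ and $M'$ lies in $\mathcal{GI}_{(\mathcal{Z,W})}$ — merely knowing that $N$ admits \emph{some} abstract $\mathcal{GI}$-coresolution of length $n$ is not enough. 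The $\Hom(-,\nu)$-acyclicity supplied by Setup 1 condition (4) is precisely what activates the dual of \cite[Coroll. 4.10]{BMS} and bridges this gap.
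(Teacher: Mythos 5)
Your proposal reproduces the paper's proof: decompose $M \oplus M' \in \pi\mathcal{GP}_{(\omega,\mcB,1)}$ via Proposition \ref{prop: direct summd of pi1GP}, apply Lemma \ref{lem: Gid(piXcapY)<=n}(1) to bound $\Gid_{(\mathcal{Z,W})}(M\oplus M')$, form the direct sum of two $\nu$-coresolutions, and use the dual of \cite[Coroll.~4.10]{BMS} plus closure under direct summands to land $E^n$ in $\mathcal{GI}_{(\mathcal{Z,W})}$. The one step you elide is the observation that $M' \in {\rm add}(\pi\mathcal{GP}_{(\omega,\mcB,1)}) = \mathcal{GP}_{(\mcA,\mcB)}$ (again from Proposition~\ref{prop: direct summd of pi1GP}), which is what entitles you to invoke Setup~1 condition~(4) for $M'$; with that noted, the argument is identical to the paper's.
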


\begin{proof}
Without loss of generality, we may let $n := \Gid_{(\mathcal{Z, W})}(\omega) < \infty$. Now given $M\in \mathcal{GP}_{(\mcA, \mcB)}$, by Proposition~\ref{prop: direct summd of pi1GP} there exists $M' \in \mcC$ such that $M \oplus M' \in \pi\mathcal{GP}_{(\omega, \mcB, 1)}$. Note also that $M' \in {\rm add}(\pi\mathcal{GP}_{(\omega, \mcB, 1)}) = \mathcal{GP}_{(\mcA,\mcB)}$. On the other hand, by Lemma~\ref{lem: Gid(piXcapY)<=n} we have $\Gid_{(\mathcal{Z, W})}(M \oplus M') \leq n$. Now, since every object in $\mathcal{GP}_{(\mcA,\mcB)}$ admits a $\Hom(-,\nu)$-acyclic $\nu$-coresolution, we can form two exact sequences
\begin{align*}
M & \rightarrowtail V^0 \to \cdots \to V^{n-1} \twoheadrightarrow E^n, \\
M' & \rightarrowtail (V^0)' \to \cdots \to (V^{n-1})' \twoheadrightarrow (E^n)',
\end{align*}
with $V^j, (V^j)' \in \nu$ for every $0 \leq j \leq n-1$, from which we get the exact sequence 
\[
M \oplus M' \rightarrowtail V^0 \oplus (V^0)' \to \cdots \to V^{n-1} \oplus (V^{n-1})' \twoheadrightarrow E^n \oplus (E^n)'
\]
with $V^j \oplus (V^j)' \in \nu \subseteq \mathcal{GI}_{(\mathcal{Z,W})}$. Since $\Gid_{(\mathcal{Z, W})}(M\oplus M')\leq n$, it follows that $E^n \oplus (E^n)' \in \mathcal{GI}_{(\mathcal{Z, W})}$, and so $E^n \in \mathcal{GI}_{(\mathcal{Z, W})}$. Therefore, $\Gid_{(\mathcal{Z,W})}(M) \leq n$. The other inequality follows in a similar way.
\end{proof}

The following result extends Proposition~\ref{pro: Gid(GP)<=n} and describes the relation between the classes $\mathcal{GP}^\wedge_{(\mcA,\mcB)}$ and $\mathcal{GI}^\vee_{(\mathcal{Z,W})}$. Before presenting it, let us point out the following fact regarding $\mathcal{GI}_{(\mathcal{Z, W})}$ and $\mathcal{GP}_{(\mathcal{A,B})}$.

\begin{remark}\label{rmk: GI thick}
From \cite[Coroll. 4.10]{BMS} we know that $(\mathcal{GP}_{(\mcA,\mcB)},\omega)$ is a left Frobenius pair for any GP-admissible pair $(\mcA,\mcB)$ with $\omega$ closed under direct summands. Then, it follows by \cite[Thm. 2.11]{BMPS} that $\mathcal{GP}_{(\mcA,\mcB)}^\wedge$ is thick. Dually, one has that the class $\mathcal{GI}_{(\mathcal{Z,W})}^{\vee}$ is also thick for any GI-admissible pair $(\mathcal{Z,W})$ with $\nu$ closed under direct summands.
\end{remark}

\begin{proposition}\label{pro: GidGP^<=n}
If either $\mcC$ is an AB4 and AB4${}^\ast$ abelian category and the conditions of Setup 1 are satisfied, or $\mcC$ is an abelian category and the conditions of Setup 2 hold. Then, 
\begin{align*}
\Gid_{(\mathcal{Z, W})}(\mathcal{GP}_{(\mcA,\mcB)}^{\wedge}) & \leq \Gid_{(\mathcal{Z,W})}(\omega), \\
\Gpd_{(\mathcal{A,B})}(\mathcal{GI}_{(\mathcal{Z},\mathcal{W})}^{\vee}) & \leq \Gpd_{(\mathcal{A,B})}(\nu).
\end{align*}
\end{proposition}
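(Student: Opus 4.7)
I would prove this by induction on the resolution dimension $r := \Gpd_{(\mcA,\mcB)}(M)$, leveraging the already-proven bound for objects in $\mathcal{GP}_{(\mcA,\mcB)}$ itself. Setting $n := \Gid_{(\mathcal{Z,W})}(\omega)$, the claim becomes vacuous if $n = \infty$, so I assume $n < \infty$.

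For the base case $r = 0$, the object $M$ lies in $\mathcal{GP}_{(\mcA,\mcB)}$, and so $\Gid_{(\mathcal{Z,W})}(M) \leq n$ is exactly Proposition \ref{pro: Gid(GP)<=n} under Setup 1, and Lemma \ref{lem: Gid(piXcapY)<=n}(2) under Setup 2. For the inductive step with $r \geq 1$, I would start from any $\mathcal{GP}_{(\mcA,\mcB)}$-resolution
\[
0 \to G_r \to G_{r-1} \to \cdots \to G_0 \twoheadrightarrow M \to 0
\]
of minimal length, set $K := \Ker(G_0 \twoheadrightarrow M)$, and observe that $K \in \mathcal{GP}^\wedge_{(\mcA,\mcB)}$ with $\Gpd_{(\mcA,\mcB)}(K) \leq r-1$. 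By the inductive hypothesis $\Gid_{(\mathcal{Z,W})}(K) \leq n$, and by the base case $\Gid_{(\mathcal{Z,W})}(G_0) \leq n$.

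It then remains to apply to the short exact sequence $K \rightarrowtail G_0 \twoheadrightarrow M$ the standard Auslander--Buchweitz inequality
\[
\Gid_{(\mathcal{Z,W})}(M) \leq \max\{\Gid_{(\mathcal{Z,W})}(K) - 1, \, \Gid_{(\mathcal{Z,W})}(G_0)\},
\]
which yields $\Gid_{(\mathcal{Z,W})}(M) \leq \max\{n-1, n\} = n$, completing the induction. The second (dual) inequality $\Gpd_{(\mcA,\mcB)}(\mathcal{GI}_{(\mathcal{Z,W})}^{\vee}) \leq \Gpd_{(\mcA,\mcB)}(\nu)$ is obtained by the symmetric argument using Proposition \ref{pro: Gid(GP)<=n} (resp.\ Lemma \ref{lem: Gid(piXcapY)<=n}(2)) on the $\Gpd$-side and the dual SES inequality.

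The one piece that must be invoked rather than proved in situ is the SES inequality for $\Gid_{(\mathcal{Z,W})}$, which I expect to be the main technical ingredient. It is the analogue for relative Gorenstein injective dimension of the classical inequality $\id(C) \leq \max\{\id(A) - 1, \id(B)\}$ for $A \rightarrowtail B \twoheadrightarrow C$, and it is a consequence of the thickness of $\mathcal{GI}_{(\mathcal{Z,W})}^{\vee}$ (Remark \ref{rmk: GI thick}) together with standard resolution-dimension shifting arguments as developed in \cite{BMS,BMPS}. Alternatively, one could avoid this inequality altogether by invoking the Auslander--Buchweitz approximation for the left Frobenius pair $(\mathcal{GP}_{(\mcA,\mcB)}, \omega)$ to produce a SES $L \rightarrowtail X \twoheadrightarrow M$ with $X \in \mathcal{GP}_{(\mcA,\mcB)}$ and $L \in \omega^\wedge$, then bound $\Gid(L)$ by a separate induction on $\omega$-resolution length using that $\Gid(\omega) \leq n$; I would present whichever route leads to the cleaner write-up.
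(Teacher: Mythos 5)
Your proposal is correct and coincides with the paper's argument: induction on $k := \Gpd_{(\mcA,\mcB)}(M)$, base case from Proposition \ref{pro: Gid(GP)<=n} (respectively Lemma \ref{lem: Gid(piXcapY)<=n}(2) under Setup 2), and the inductive step via the short exact sequence $G_1 \rightarrowtail G_0 \twoheadrightarrow M$ with $G_0 \in \mathcal{GP}_{(\mcA,\mcB)}$. The one step you leave as ``to be invoked'' is where the paper is more explicit: rather than asserting the SES inequality for $\Gid_{(\mathcal{Z,W})}$ directly, it first converts to the relative injective dimension $\id_\nu$ via the dual of \cite[Coroll.~4.11]{BMS} (valid once one knows $G_0, G_1 \in \mathcal{GI}^{\vee}_{(\mathcal{Z,W})}$, whence $M \in \mathcal{GI}^{\vee}_{(\mathcal{Z,W})}$ by thickness, Remark \ref{rmk: GI thick}), and then applies the elementary long-exact-sequence bound $\id_\nu(M) \le \max\{\id_\nu(G_1)-1, \id_\nu(G_0)\} \le n$ --- exactly the mechanism you gesture at with ``thickness plus shifting,'' so the two write-ups are in substance the same.
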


\begin{proof}
Let $n := \Gid_{(\mathcal{Z, W})}(\omega) < \infty$ and $M \in \mathcal{GP}_{(\mcA,\mcB)}^{\wedge}$. We prove the first inequality by induction on $k := \Gpd_{(\mathcal{X, Y})}(M)$. 
\begin{itemize}
\item If $k = 0$ the result follows by Proposition~\ref{pro: Gid(GP)<=n}. 

\item For the case $k \geq 1$, we take a short exact sequence $G_1 \rightarrowtail G_0 \twoheadrightarrow M$ with $G_0 \in \mathcal{GP}_{(\mathcal{A,B})}$ and $\Gpd_{(\mathcal{A,B})}(G_1) = k - 1$. By the dual of \cite[Coroll. 4.11]{BMS} and induction hypothesis, we get that $\id_{\nu}(G_j) = \Gid_{(\mathcal{Z,W})}(G_j) \leq n$ for $j = 0, 1$, and hence $\Gid_{(\mathcal{Z, W})}(M) = \id_{\nu}(M) \leq n$ by Remark \ref{rmk: GI thick}. 
\end{itemize}
\end{proof}

We can use the previous result to show that the finitistic relative Gorenstein projective and injective dimensions of $\mcC$ are equal under Setups 1 and 2. These dimensions were defined in \cite[Def. 4.16]{BMS} as
\begin{align*}
{\rm FGPD}_{(\mathcal{A,B})}(\mcC) & := {\rm resdim}_{\mathcal{GP}_{(\mcA,\mcB)}}(\mathcal{GP}^\wedge_{(\mcA,\mcB)}), \\
{\rm FGID}_{(\mathcal{Z,W})}(\mcC) & := {\rm coresdim}_{\mathcal{GI}_{(\mathcal{Z,W})}}(\mathcal{GI}^\vee_{(\mathcal{Z,W})}).
\end{align*}
The following is a consequence of Proposition~\ref{pro: GidGP^<=n} and \cite[Coroll. 4.15 (b) and its dual]{BMS}.

\begin{theorem}\label{thm: gpd leq gid}
Suppose either $\mcC$ is an AB4 and AB4${}^\ast$ abelian category and the conditions of Setup 1 are satisfied, or $\mcC$ is an abelian category and the conditions of Setup 2 hold. 
If $\mathcal{GP}_{(\mathcal{A,B})}^{\wedge} = \mathcal{GI}_{(\mathcal{Z,W})}^{\vee}$ then 
\[
\rm{FGID}_{(\mathcal{Z,W})}(\mcC) = \Gid_{(\mathcal{Z,W})}(\omega) = \id_{\nu}(\omega) = \pd_{\omega}(\nu) = \Gpd_{(\mathcal{A,B})}(\nu) = \rm{FGPD}_{(\mathcal{A,B})}(\mcC).
\]
\end{theorem}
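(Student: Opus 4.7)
The proof consists of establishing the chain of five equalities, and I would verify them from the inside out. The central identity $\id_\nu(\omega) = \pd_\omega(\nu)$ is simply the general relation $\pd_{\mcB}(\mcA) = \id_{\mcA}(\mcB)$ noted in the preliminaries, applied to the pair $(\omega,\nu)$.

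For the two ``inner'' equalities $\Gid_{(\mathcal{Z,W})}(\omega) = \id_\nu(\omega)$ and $\pd_\omega(\nu) = \Gpd_{(\mcA,\mcB)}(\nu)$, I would invoke \cite[Coroll. 4.15 (b)]{BMS} together with its dual: for every $M \in \mathcal{GP}^\wedge_{(\mcA,\mcB)}$ one has $\Gpd_{(\mcA,\mcB)}(M) = \pd_\omega(M)$, and dually $\Gid_{(\mathcal{Z,W})}(N) = \id_\nu(N)$ for every $N \in \mathcal{GI}^\vee_{(\mathcal{Z,W})}$. Under the hypothesis $\mathcal{GP}^\wedge_{(\mcA,\mcB)} = \mathcal{GI}^\vee_{(\mathcal{Z,W})}$, both $\omega$ and $\nu$ lie inside each of these classes, since $\omega \subseteq \mcA \subseteq \mathcal{GP}_{(\mcA,\mcB)}$ and $\nu \subseteq \mathcal{W} \subseteq \mathcal{GI}_{(\mathcal{Z,W})}$. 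Applying each identity pointwise and then taking suprema yields the two inner equalities.

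The two ``outer'' equalities ${\rm FGID}_{(\mathcal{Z,W})}(\mcC) = \Gid_{(\mathcal{Z,W})}(\omega)$ and $\Gpd_{(\mcA,\mcB)}(\nu) = {\rm FGPD}_{(\mcA,\mcB)}(\mcC)$ are where Proposition~\ref{pro: GidGP^<=n} enters. By the definition of ${\rm FGID}$, one has ${\rm FGID}_{(\mathcal{Z,W})}(\mcC) = \Gid_{(\mathcal{Z,W})}(\mathcal{GI}^\vee_{(\mathcal{Z,W})})$, which by the standing hypothesis equals $\Gid_{(\mathcal{Z,W})}(\mathcal{GP}^\wedge_{(\mcA,\mcB)})$. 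Proposition~\ref{pro: GidGP^<=n} supplies the upper bound $\Gid_{(\mathcal{Z,W})}(\mathcal{GP}^\wedge_{(\mcA,\mcB)}) \leq \Gid_{(\mathcal{Z,W})}(\omega)$, while the reverse inequality is immediate from the containment $\omega \subseteq \mathcal{GP}^\wedge_{(\mcA,\mcB)} = \mathcal{GI}^\vee_{(\mathcal{Z,W})}$. The dual argument (using the other half of Proposition~\ref{pro: GidGP^<=n}) handles the right-hand outer equality.

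Being essentially a careful assembly of already-established facts, the argument presents no serious obstacle. The only point demanding mild care is verifying that the cited ingredients from \cite{BMS} apply uniformly under both Setup 1 and Setup 2; this is straightforward because both setups are tailored to supply exactly the closure, relative (co)generator and Ext-vanishing hypotheses needed by those results, and Proposition~\ref{pro: GidGP^<=n} was itself formulated to hold in either setup.
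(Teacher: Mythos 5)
Your proposal is correct and follows exactly the paper's route: the paper itself states that the theorem ``is a consequence of Proposition~\ref{pro: GidGP^<=n} and \cite[Coroll. 4.15 (b) and its dual]{BMS},'' and your write-up simply fills in the details of how those two ingredients combine (the inner equalities from the [BMS] corollary applied pointwise over $\omega$ and $\nu$ via the hypothesis $\mathcal{GP}^\wedge_{(\mcA,\mcB)} = \mathcal{GI}^\vee_{(\mathcal{Z,W})}$, the outer equalities from Proposition~\ref{pro: GidGP^<=n} together with the trivial reverse inequalities, and the central one from $\pd_{\mcB}(\mcA)=\id_{\mcA}(\mcB)$). Nothing is missing.
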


Related to the finitistic relative Gorenstein dimensions, we also have the global $(\mcA,\mcB)$-Gorenstein projective and global $(\mathcal{Z,W})$-Gorenstein injective dimensions, also defined in \cite[Def. 4.17]{BMS}, as
\begin{align*}
{\rm gl.GPD}_{(\mathcal{A,B})}(\mcC) & := {\rm Gpd}_{(\mcA,\mcB)}(\mathcal{C}), \\
{\rm gl.GID}_{(\mathcal{Z,W})}(\mcC) & := {\rm Gid}_{(\mathcal{Z,W})}(\mathcal{C}).
\end{align*}
One can note that 
\begin{align*}
{\rm FGPD}_{(\mathcal{A,B})}(\mcC) & \leq {\rm gl.GPD}_{(\mathcal{A,B})}(\mcC), \\
\rm{FGID}_{(\mathcal{Z,W})}(\mcC) & \leq {\rm gl.GID}_{(\mathcal{Z,W})}(\mcC).
\end{align*}
Note that $\mcC = \mathcal{GP}^\wedge_{(\mcA,\mcB)}$ implies that ${\rm FGPD}_{(\mathcal{A,B})}(\mcC) = {\rm gl.GPD}_{(\mathcal{A,B})}(\mcC)$, and a similar equality holds for the injective case if $\mathcal{C} = \mathcal{GI}^\vee_{(\mathcal{Z,W})}$. It follows that if $\mcC = \mathcal{GP}^\wedge_{(\mcA,\mcB)} = \mathcal{GI}^\vee_{(\mathcal{Z,W})}$, one obtains the equality
\begin{align}
{\rm gl.GID}_{(\mathcal{Z,W})}(\mcC) & = \rm{FGID}_{(\mathcal{Z,W})}(\mcC) = {\rm FGPD}_{(\mathcal{A,B})}(\mcC) = {\rm gl.GPD}_{(\mathcal{A,B})}(\mcC). \label{eqn:BennisMahdou_relative}
\end{align}
In particular, Theorem \ref{thm: gpd leq gid} covers \cite[Coroll. 3.5]{Becerril} in the case where $\mcC$ has enough projective and injective objects, and $\omega$ and $\nu$ coincide with the class of projective and injective objects of $\mcC$, respectively. The assumption $\mcC = \mathcal{GP}^\wedge_{(\mcA,\mcB)} = \mathcal{GI}^\vee_{(\mathcal{Z,W})}$ is sufficient but not necessary. For instance, in the absolute case $\mcA = \mcB =$ projective $R$-modules and $\mathcal{Z} = \mathcal{W} =$ injective $R$-modules, the equality \eqref{eqn:BennisMahdou_global} holds for any arbitrary ring $R$. In the relative setting, there are some known cases where \eqref{eqn:BennisMahdou_relative} holds, such as the case where $\mcC$ is an abelian category with enough projective and injective objects with ${\rm gl.GPD}_{(\mathcal{A,B})}(\mcC) < \infty$, $\omega =$ projective objects of $\mcC$, and ${\rm pd}(\mathcal{Z}) < \infty$ (see \cite[Prop. 3.15]{Becerril}). The following is a generalization of the cited result, which follows by Proposition \ref{pro: GidGP^<=n} and Theorem \ref{thm: gpd leq gid}.

\begin{corollary}\label{coro:relative_global}
Suppose either $\mcC$ is an AB4 and AB4${}^\ast$ abelian category and the conditions of Setup 1 are satisfied, or $\mcC$ is an abelian category and the conditions of Setup 2 hold. If either ${\rm gl.GPD}_{(\mathcal{A,B})}(\mcC) < \infty$ and $\Gid_{(\mathcal{Z,W})}(\omega) < \infty$, or ${\rm gl.GID}_{(\mathcal{Z,W})}(\mcC) < \infty$ and $\Gpd_{(\mathcal{A,B})}(\nu) < \infty$, then the following hold:
\[
{\rm gl.GID}_{(\mathcal{Z,W})}(\mcC) = \Gid_{(\mathcal{Z,W})}(\omega) = \pd_{\omega}(\nu) = \Gpd_{(\mathcal{A,B})}(\nu) = {\rm gl.GPD}_{(\mathcal{A,B})}(\mcC).
\]
\end{corollary}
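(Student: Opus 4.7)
My plan is to deduce the corollary by reducing each case to Theorem \ref{thm: gpd leq gid}, whose hypothesis is precisely $\mathcal{GP}^\wedge_{(\mcA,\mcB)} = \mathcal{GI}^\vee_{(\mathcal{Z,W})}$. The strategy is that each of the two finiteness hypotheses, together with Proposition \ref{pro: GidGP^<=n}, forces both equalities $\mcC = \mathcal{GP}^\wedge_{(\mcA,\mcB)}$ and $\mcC = \mathcal{GI}^\vee_{(\mathcal{Z,W})}$, after which the remark immediately preceding the corollary (namely \eqref{eqn:BennisMahdou_relative}, which says the finitistic and global dimensions coincide under this condition) lets Theorem \ref{thm: gpd leq gid} finish the job.

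Concretely, suppose first we are in case (i), i.e.\ ${\rm gl.GPD}_{(\mathcal{A,B})}(\mcC) < \infty$ and $\Gid_{(\mathcal{Z,W})}(\omega) < \infty$. The first finiteness means $\mcC = \mathcal{GP}^\wedge_{(\mcA,\mcB)}$. Applying Proposition \ref{pro: GidGP^<=n} yields
\[
\Gid_{(\mathcal{Z,W})}(\mcC) = \Gid_{(\mathcal{Z,W})}(\mathcal{GP}^\wedge_{(\mcA,\mcB)}) \leq \Gid_{(\mathcal{Z,W})}(\omega) < \infty,
\]
so $\mcC = \mathcal{GI}^\vee_{(\mathcal{Z,W})}$ as well. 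Thus $\mathcal{GP}^\wedge_{(\mcA,\mcB)} = \mcC = \mathcal{GI}^\vee_{(\mathcal{Z,W})}$, and the remark before the corollary gives ${\rm gl.GPD}_{(\mathcal{A,B})}(\mcC) = {\rm FGPD}_{(\mathcal{A,B})}(\mcC)$ and ${\rm gl.GID}_{(\mathcal{Z,W})}(\mcC) = {\rm FGID}_{(\mathcal{Z,W})}(\mcC)$. Theorem \ref{thm: gpd leq gid} then supplies the chain of equalities
\[
{\rm FGID}_{(\mathcal{Z,W})}(\mcC) = \Gid_{(\mathcal{Z,W})}(\omega) = \pd_{\omega}(\nu) = \Gpd_{(\mathcal{A,B})}(\nu) = {\rm FGPD}_{(\mathcal{A,B})}(\mcC),
\]
which, combined with the previous identifications, yields exactly the statement of the corollary.

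Case (ii) is handled dually: from ${\rm gl.GID}_{(\mathcal{Z,W})}(\mcC) < \infty$ we have $\mcC = \mathcal{GI}^\vee_{(\mathcal{Z,W})}$, and the dual inequality in Proposition \ref{pro: GidGP^<=n} gives $\Gpd_{(\mcA,\mcB)}(\mcC) \leq \Gpd_{(\mcA,\mcB)}(\nu) < \infty$, so $\mcC = \mathcal{GP}^\wedge_{(\mcA,\mcB)}$. The rest of the argument is identical to case (i).

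There is really no hard step here; the only thing to be careful about is that Proposition \ref{pro: GidGP^<=n} and Theorem \ref{thm: gpd leq gid} both require either Setup 1 with $\mcC$ AB4 and AB4$^{\ast}$, or Setup 2, which is exactly the assumption of the corollary, so both invocations are legitimate. The small subtle point worth flagging in the write-up is that the implication ``$\mcC = \mathcal{GP}^\wedge_{(\mcA,\mcB)} \Rightarrow {\rm FGPD}_{(\mcA,\mcB)}(\mcC) = {\rm gl.GPD}_{(\mcA,\mcB)}(\mcC)$'' (and its dual) is already noted in the paragraph immediately preceding the corollary, so no extra work is needed to upgrade from the finitistic to the global dimensions once the key coincidence $\mathcal{GP}^\wedge_{(\mcA,\mcB)} = \mcC = \mathcal{GI}^\vee_{(\mathcal{Z,W})}$ has been established.
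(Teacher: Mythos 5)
Your proof is correct and follows exactly the route the paper intends: the paper gives no detailed argument, stating only that the corollary ``follows by Proposition \ref{pro: GidGP^<=n} and Theorem \ref{thm: gpd leq gid}'', and your write-up fills in precisely the missing steps (each finiteness hypothesis forces $\mcC = \mathcal{GP}^\wedge_{(\mcA,\mcB)} = \mathcal{GI}^\vee_{(\mathcal{Z,W})}$, then \eqref{eqn:BennisMahdou_relative} upgrades finitistic to global dimensions). No gaps.
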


During the rest of this section, we give some applications of Theorem~\ref{thm: gpd leq gid} in the context of modules over rings. Firstly, note that Bennis and Mahdou \cite[Thm. 1.1]{BMglobal} can be obtained from the previous corollary by setting $\mcA = \mcB = \mathcal{P}(R)$ (projective $R$-modules) and $\mathcal{Z} = \mathcal{W} = \mathcal{I}(R)$ (injective $R$-modules). Indeed, it is not hard to note that $(\mathcal{P}(R),\mathcal{P}(R))$ is a GP-admissible pair, and that $(\mathcal{I}(R),\mathcal{I}(R))$ is a GI-admissible pair. Moreover, conditions in Setup 1 and 2 are clearly satisfied for this choice of classes. Finally, if we assume that 
\[
\rm{gl.GPD}(R) := \Gpd_{(\mathcal{P}(R), \mathcal{P}(R))}(\Mod(R))
\]
is finite, then we know by \cite[Lem. 2.1]{BMglobal} that $\id(\mathcal{P}(R)) < \infty$. On the other hand, the containment $\mcI(R) \subseteq \mathcal{GI}(R)$ implies that ${\rm Gid}(\mathcal{P}(R)) \leq \id(\mathcal{P}(R)) < \infty$. A dual argument shows that ${\rm Gpd}(\mathcal{I}(R)) < \infty$ if we assume instead that 
\[
\rm{gl.GID}(R) := \Gid_{(\mathcal{I}(R), \mathcal{I}(R))}(\Mod(R))
\]
is finite. So the requirements in Corollary \ref{coro:relative_global} are met, and hence \eqref{eqn:BennisMahdou_global} holds.

Concerning Ding projective and Ding injective global dimensions, we know from the preliminaries and \cite{BMS} that $(\mathcal{P}(R), \mathcal{F}(R))$ is a GP-admissible pair and that $(\text{FP}\text{-}\mcI(R), \mathcal{I}(R))$ is a GI-admissible pair in $\Mod(R)$. Conditions in Setup 1 and 2 are clearly fulfilled for this choices. We can assert the same for GP-admissible and GI-admissible pairs $(\mathcal{P}(R), \mathcal{L}(R))$ and $(\text{FP}_\infty\text{-}\mcI(R), \mathcal{I}(R))$, respectively. We denote the corresponding global Ding projective (injective) and AC-Gorenstein projective (injective) dimensions as follows:
\begin{align*}
\rm{gl.DPD}(R) & := \Gpd_{(\mcP(R),\mathcal{F}(R))}(\Mod(R)), \\
\rm{gl.DID}(R) & := \Gid_{(\text{FP}\text{-}\mcI(R),\mcI(R))}(\Mod(R)), \\
\rm{gl.GPD}_{\rm{AC}}(R) & := \Gpd_{(\mcP(R),\mathcal{L}(R))}(\Mod(R)), \\
\rm{gl.GID}_{\rm{AC}}(R) & := \Gid_{(\text{FP}_\infty\text{-}\mcI(R),\mcI(R))}(\Mod(R)).
\end{align*}
The equality \eqref{eqn:BennisMahdou_global} can be extended to these dimensions after applying Corollary \ref{coro:relative_global}.

\begin{corollary}
The following equality holds for any arbitrary ring $R$:
\begin{align*}
{\rm gl.GPD}_{\rm{AC}}(R) & = \rm{gl.DPD}(R) = {\rm gl.GPD}(R) = {\rm gl.GID}(R) = \rm{gl.DID}(R) = {\rm gl.GID}_{\rm{AC}}(R).
\end{align*}
\end{corollary}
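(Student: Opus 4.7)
The plan is to apply Corollary \ref{coro:relative_global} separately to the three pair combinations corresponding to the classical Gorenstein, Ding, and AC-Gorenstein setups:
\[
((\mcP(R),\mcP(R)),(\mcI(R),\mcI(R))),\quad ((\mcP(R),\mcF(R)),(\text{FP-}\mcI(R),\mcI(R))),
\]
\[
\text{and}\quad ((\mcP(R),\mcL(R)),(\text{FP}_{\infty}\text{-}\mcI(R),\mcI(R))).
\]
The pivotal observation is that in all three cases the intersection classes coincide: from the containments $\mcP(R)\subseteq \mcF(R)\subseteq \mcL(R)$ and $\mcI(R)\subseteq \text{FP-}\mcI(R)\subseteq \text{FP}_{\infty}\text{-}\mcI(R)$, one obtains $\omega = \mcA\cap \mcB = \mcP(R)$ and $\nu = \mathcal{Z}\cap \mathcal{W} = \mcI(R)$ in each case. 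Consequently the intermediate quantity $\pd_{\omega}(\nu) = \pd_{\mcP(R)}(\mcI(R))$ appearing in the conclusion of Corollary \ref{coro:relative_global} is the \emph{same} in all three pair settings, which will ultimately force the six dimensions to agree.

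I would first verify that Setup 2 holds for each pair combination. Closure of $\mcP(R)$ and $\mcI(R)$ under direct summands is immediate. The $\Ext^{1}$ conditions are essentially automatic, since $\Ext^{1}(-,\mcI(R))=0$ because $\nu$ consists of injectives, and $\Ext^{1}(\mcP(R),-)=0$ because $\omega$ consists of projectives. The resolution conditions hold because $\Mod(R)$ has enough projectives and injectives, and every injective coresolution (resp.\ projective resolution) is automatically $\Hom(-,\mcI(R))$-acyclic (resp.\ $\Hom(\mcP(R),-)$-acyclic). The testing-class containments above moreover yield the reverse containments at the level of Gorenstein classes, producing
\[
\mathrm{gl.GPD}(R)\leq \mathrm{gl.DPD}(R)\leq \mathrm{gl.GPD}_{\mathrm{AC}}(R),\quad \mathrm{gl.GID}(R)\leq \mathrm{gl.DID}(R)\leq \mathrm{gl.GID}_{\mathrm{AC}}(R).
\]

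Assuming at least one of the six is finite (otherwise the equality reads $\infty=\infty$), finiteness propagates down each chain to give $\mathrm{gl.GPD}(R)<\infty$ and $\mathrm{gl.GID}(R)<\infty$. Bennis and Mahdou's \cite[Lem. 2.1]{BMglobal} and its dual then provide $\id(\mcP(R))<\infty$ and $\pd(\mcI(R))<\infty$. Since $\mcI(R)\subseteq \mcGI_{(\mathcal{Z,W})}$ and $\mcP(R)\subseteq \mcGP_{(\mcA,\mcB)}$ in each of the three pair settings, this forces $\Gid_{(\mathcal{Z,W})}(\omega)\leq \id(\mcP(R))<\infty$ and $\Gpd_{(\mcA,\mcB)}(\nu)\leq \pd(\mcI(R))<\infty$ in each setting. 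Applying Corollary \ref{coro:relative_global} to each of the three pairs then yields
\[
\mathrm{gl.GPD}(R) = \mathrm{gl.GID}(R),\ \ \mathrm{gl.DPD}(R) = \mathrm{gl.DID}(R),\ \ \mathrm{gl.GPD}_{\mathrm{AC}}(R) = \mathrm{gl.GID}_{\mathrm{AC}}(R),
\]
all three of these common values being equal to $\pd_{\mcP(R)}(\mcI(R))$. Hence all six dimensions coincide.

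The main obstacle is propagating finiteness \emph{upward} through the chain: knowing only that $\mathrm{gl.GPD}(R)$ is finite does not, a priori, give $\mathrm{gl.DPD}(R)<\infty$ or $\mathrm{gl.GPD}_{\mathrm{AC}}(R)<\infty$, which is needed before Corollary \ref{coro:relative_global} can be invoked for the Ding and AC pairs. My intended route is to show that when $\id(\mcP(R)) = n<\infty$, the $n$-th syzygy of any module is already AC-Gorenstein projective: starting from $\Ext^{\geq 1}(\Omega^{n}M,\mcP(R))=0$, one builds a complete projective resolution around $\Omega^{n}M$ and verifies its $\Hom(-,L)$-acyclicity for every level module $L$ by exploiting that every level module has bounded projective dimension under $\id(\mcP(R))<\infty$. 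This yields $\mathrm{gl.GPD}_{\mathrm{AC}}(R)\leq n$ (and the same bound for Ding), closing the chain and completing the proof.
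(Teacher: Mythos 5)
You correctly identify the central difficulty: after establishing the chain inequalities
$\mathrm{gl.GPD}(R)\leq \mathrm{gl.DPD}(R)\leq \mathrm{gl.GPD}_{\mathrm{AC}}(R)$ and $\mathrm{gl.GID}(R)\leq \mathrm{gl.DID}(R)\leq \mathrm{gl.GID}_{\mathrm{AC}}(R)$, you cannot simply apply Corollary~\ref{coro:relative_global} to the three matched pair combinations, because for the Ding and AC combinations you would first need to know that $\mathrm{gl.DPD}(R)$ or $\mathrm{gl.GPD}_{\mathrm{AC}}(R)$ (resp.\ the injective analogues) is finite, and finiteness flows \emph{down} each chain, not up. (A smaller imprecision: from one of the six being finite you get only one of $\mathrm{gl.GPD}(R), \mathrm{gl.GID}(R)$ finite, not both as you assert; the other is obtained only \emph{after} applying the corollary to the absolute pair.) Your proposed patch for the upward propagation --- that under $\id(\mathcal{P}(R))=n<\infty$ every level module has bounded projective dimension, so one can construct a complete projective resolution around $\Omega^n M$ and check $\Hom(-,\mathcal{L}(R))$-acyclicity --- rests on an assertion that is neither obvious nor justified in your sketch. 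Level-ness is a vanishing condition on $\mathrm{Tor}$, and $\id(\mathcal{P}(R))<\infty$ is an $\Ext$-side condition on projectives; extracting a bound on $\pd$ of level modules from it is precisely the kind of structural result that would need a separate proof. So the gap you flagged remains open in your proposal.

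The paper sidesteps the upward-propagation problem entirely by \emph{mixing} pairs rather than matching them. The hypotheses of Corollary~\ref{coro:relative_global} form a disjunction: one needs either $\mathrm{gl.GPD}_{(\mathcal{A,B})}(\mathcal{C})<\infty$ together with $\Gid_{(\mathcal{Z,W})}(\omega)<\infty$, or $\mathrm{gl.GID}_{(\mathcal{Z,W})}(\mathcal{C})<\infty$ together with $\Gpd_{(\mathcal{A,B})}(\nu)<\infty$. So, assuming $\mathrm{gl.GPD}(R)<\infty$, the paper takes $(\mathcal{A,B})=(\mathcal{P}(R),\mathcal{P}(R))$ (the absolute GP pair, whose global dimension is the one assumed finite) together with the Ding GI pair $(\mathcal{Z,W})=(\mathrm{FP}\text{-}\mathcal{I}(R),\mathcal{I}(R))$. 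Since $\omega=\mathcal{P}(R)$ and $\id(\mathcal{P}(R))<\infty$ by Bennis--Mahdou, one has $\Gid_{(\mathcal{Z,W})}(\omega)\leq \id(\mathcal{P}(R))<\infty$, and the first disjunct is satisfied without ever knowing whether $\mathrm{gl.DPD}(R)$ or $\mathrm{gl.DID}(R)$ is finite. The corollary then yields $\mathrm{gl.DID}(R)=\mathrm{gl.GPD}(R)$; replacing the Ding GI pair by $(\mathcal{I}(R),\mathcal{I}(R))$ and by the AC GI pair gives $\mathrm{gl.GID}(R)=\mathrm{gl.GPD}(R)=\mathrm{gl.GID}_{\mathrm{AC}}(R)$, and the dual mixing (absolute GI pair against Ding and AC GP pairs) gives $\mathrm{gl.DPD}(R)=\mathrm{gl.GID}(R)=\mathrm{gl.GPD}_{\mathrm{AC}}(R)$. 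No propagation of finiteness up the chains is ever needed, and all six values agree (and equal $\pd_{\mathcal{P}(R)}(\mathcal{I}(R))$, as you observed). You should replace your syzygy argument with this mixing strategy.
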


\begin{proof}
We show that if any of these six dimensions is finite, then so are the rest. First note that the containments $\mathcal{P}(R) \subseteq \mathcal{F}(R) \subseteq \mathcal{L}(R)$ and $\mathcal{I}(R) \subseteq \text{FP}\text{-}\mcI(R) \subseteq \text{FP}_\infty\text{-}\mcI(R)$ imply that 
\begin{align*}
{\rm gl.GPD}(R) & \leq \rm{gl.DPD}(R) \leq {\rm gl.GPD}_{\rm{AC}}(R), \\
{\rm gl.GID}(R) & \leq \rm{gl.DID}(R) \leq {\rm gl.GID}_{\rm{AC}}(R).
\end{align*}
Suppose for instance that $\rm{gl.GPD}(R) < \infty$ (the other cases are analogous), and consider the pairs $(\mathcal{P}(R),\mathcal{P}(R))$ and $(\text{FP}\text{-}\mcI(R),\mathcal{I}(R))$. We know that the finiteness of $\rm{gl.GPD}(R)$ implies that $\id(\mathcal{P}(R)) < \infty$. On the other hand, note that ${\rm Gid}_{(\text{FP}\text{-}\mcI(R),\mathcal{I}(R))}(\mathcal{P}(R)) \leq \id(\mathcal{P}(R))$. Thus, the hypotheses in Corollary \ref{coro:relative_global} hold, and so $\rm{gl.DID}(R) = {\rm gl.GPD}(R)$. Replacing the pair $(\text{FP}\text{-}\mcI(R),\mathcal{I}(R))$ by $(\mcI(R),\mathcal{I}(R))$ yields $\rm{gl.GID}(R) = {\rm gl.GPD}(R)$. In particular, $\rm{gl.GID}(R) < \infty$, and so a dual argument shows that $\rm{gl.DPD}(R) = {\rm gl.GID}(R)$. In a similar way, we can show that $\rm{gl.GID}_{\rm AC}(R) = {\rm gl.GPD}(R) = {\rm gl.GID}(R) = \rm{gl.GPD}_{\rm AC}(R)$
\end{proof}


\section*{Acknowledgements}

The authors want to thank professor Valente Santiago (Departamento de Mate-m\'aticas, Universidad Nacional Aut\'onoma de M\'exico), whose suggestions shortened some of the proofs appearing in an earlier version of this manuscript.


\section*{Funding}

The authors thank Project PAPIIT-Universidad Nacional Aut\'onoma de M\'exico IN103317 and IN100520. The first author was supported by Sociedad Matem\'atica Mexicana - Fundaci\'on Sof\'ia Kovaleskaia and Programa de Desarrollo de las Ciencias B\'asicas (PEDECIBA) postdoctoral fellowship. The third author was supported by the following grants and institutions: Fondo Vaz Ferreira \# II/FVF/2019/135 (funds are given by the Direcci\'on Nacional de Innovaci\'on, Ciencia y Tecnolog\'ia - Ministerio de Educaci\'on y Cultura, and administered through Fundaci\'on Julio Ricaldoni), Agencia Nacional de Investigaci\'on e Innovaci\'on (ANII), and PEDECIBA. 


\bibliographystyle{alpha}
\bibliography{biblionSG}

\end{document}